\documentclass[preprint,11pt,authoryear]{elsarticle}

\usepackage[utf8]{inputenc}
\usepackage{geometry}
\textwidth = 460pt
\linespread{1.16}

\usepackage[utf8]{inputenc}
\usepackage{graphicx} 
\usepackage{bussproofs}
\usepackage{amsmath,amssymb,amsfonts}
\usepackage{mathrsfs,latexsym,amsthm,enumerate}
\usepackage{tikz}
\usepackage{tikz-cd}
\usetikzlibrary{cd}
\usepackage{natbib}
\usepackage{amscd}
\usepackage[all]{xy}
\usepackage[utf8]{inputenc}
\usepackage{amsmath,amssymb,amsfonts}
\usepackage{mathrsfs,latexsym,amsthm,enumerate}
\usepackage{breqn}
\usepackage{fancyhdr}
\usepackage{sectsty}
\usepackage{stmaryrd}
\usepackage{url}
\usepackage{bbm}
\usepackage{subcaption}
\usepackage{float}
\catcode`@=11
\def\caseswithdelim#1#2{\left#1\,\vcenter{\normalbaselines\m@th
  \ialign{\strut$##\hfil$&\quad##\hfil\crcr#2\crcr}}\right.}
\catcode`@=12

\allsectionsfont{\sffamily\mdseries\upshape}

\catcode`@=11
\def\caseswithdelim#1#2{\left#1\,\vcenter{\normalbaselines\m@th
  \ialign{\strut$##\hfil$&\quad##\hfil\crcr#2\crcr}}\right.}
\catcode`@=12

\allsectionsfont{\sffamily\mdseries\upshape}

\theoremstyle{plain}
\newtheorem{lemma}{Lemma}[section]
\newtheorem{proposition}[lemma]{Proposition}
\newtheorem{corollary}[lemma]{Corollary}

\newtheorem{theorem}[lemma]{Theorem}
\newtheorem{observation}[lemma]{Observation}

\theoremstyle{definition}

\newcommand{\lra}{\leftrightarrows}
\newcommand{\ra}{\rightarrow}
\newcommand{\epi}{\twoheadrightarrow}
\newcommand{\inclu}{\hookrightarrow}

\newcommand{\up}{{\uparrow}}

\newcommand{\tc}{\textit}
\newcommand{\mb}[1]{\mbox{#1}}
\newcommand{\ca}{\mathcal}
\newcommand{\mf}{\mathsf}

\newcommand{\se}{\subseteq}
\newcommand{\sm}{\setminus}

\newcommand{\we}{\wedge}
\newcommand{\ve}{\vee}
\newcommand{\bwe}{\bigwedge}
\newcommand{\bve}{\bigvee}

\newcommand{\bca}{\bigcap}

\newcommand{\bd}[1]{\mathbf{#1}}

\newcommand{\bl}{\mathfrak{b}}
\newcommand{\op}{\mathfrak{o}}
\newcommand{\cl}{\mathfrak{c}}

\newcommand{\spa}{\mathsf{sp}}
\newcommand{\sob}{\mathsf{sob}}


\newcommand{\id}{\mathsf{id}}


\newcommand{\Om}{\Omega}

\newcommand{\pt}{\mathsf{pt}}

\newcommand{\SL}{\mathsf{S}(L)}
\newcommand{\SLop}{\mathsf{S}(L)^{op}}

\newcommand{\SO}{\mathsf{S}(\Omega (X))}
\newcommand{\SOop}{\mathsf{S}(\Omega (X))^{op}}



\title{Revisiting the relation between subspaces and sublocales}
\author{Anna Laura Suarez\fnref{address}}
\address{CMUC, Department of Mathematics, University of Coimbra, 3001-454, Coimbra, Portugal}
\address{School of Computer Science, University of Birmingham, B15 2TT, Birmingham, UK}
\ead{axs1431@cs.bham.ac.uk}

\begin{document}

\begin{frontmatter}

\begin{abstract}
We revisit results concerning the connection between subspaces of a space and sublocales of its locale of open sets. The approach we present is based on the notion of sublocale as a concrete subcollection of a locale. We characterize the frames $L$ such that the spatial sublocales of $\mathsf{S}(L)$ perfectly represent the subspaces of $\mathsf{pt}(L)$. We prove choice-free, weak versions of the results by Niefield and Rosenthal characterizing those frames such that all their sublocales are spatial. We do so by using a notion of essential prime which does not rely on the existence of enough minimal primes above every element. We will re-prove Simmons' result that spaces such that the sublocales of $\Om (X)$ perfectly represent their subspaces are exactly the scattered spaces. We will characterize scattered spaces in terms of a strong form of essentiality for primes. We apply these characterizations to show that, when $L$ is a spatial frame and a coframe, $\pt(L)$ is scattered if and only if it is $T_D$, and this holds if and only if all the primes of $L$ are completely prime.
\end{abstract}

\begin{keyword}
Frame \sep locale \sep sublocale \sep totally spatial \sep scattered \sep $T_D$

\MSC 06D22

\end{keyword}

\end{frontmatter}

\tableofcontents

\section{Introduction}

One of the features distinguishing pointfree topology from point-set topology is the behaviour of subspaces. While for a space $X$ the lattice of its subspaces is simply the powerset $\ca{P}(X)$ of its underlying set, the lattice $\SL$ of sublocales of a locale $L$ is in general just a coframe. In particular, the lattice $\SO$ of sublocales of the locale of open sets $\Om (X)$ of a space $X$ need not represent perfectly the subspaces of $X$, and might have in general many nonspatial sublocales. An account of the theory of locales and sublocales can be found in \cite{johnstone82,picadopultr2011frames, isbell72}.

\begin{enumerate}

     \item A question addressed in this article is what are those frames $L$ such that $\ca{P}(\pt(L))$ is isomorphic to the collection of all \textit{spatial} sublocales in $\mf{S}(L)$. These are the frames for which $\pt(L)$ is a $T_D$ space. We provide several characterizations of these frames and, in particular, we show that the condition is equivalent to the spatialization of $\mf{S}(L)^{op}$ being Boolean, as well as all of the primes in $L$ being \textit{weakly covered}.
     
     \item A related question that has been tackled in the literature is when is it that $\SLop$ is spatial. This question is explored in \cite{niefield87} and \cite{ISBELL91}. In \cite{niefield87} it is shown that $\SLop$ is spatial when its prime elements satisfy a certain condition; and in \cite{ISBELL91} it is shown that $\SOop$ is spatial exactly when the spectrum $\pt(\Om (X))$ is such that every closed subspace has a discrete dense subspace.
     
    \item The question of when $\ca{P}(X)$ is isomorphic to $\SO$ has been explored in \cite{simmons80}, where spaces with this property are characterized. In particular it is proven that these are exactly the \textit{scattered} spaces. We will explore the similar question of characterizing those frames $L$ such that $\mf{S}(L)$ is isomorphic to $\ca{P}(\pt(L))$.

\end{enumerate}

The work presented here is essentially an analysis of the condition described in point (1), and a revisitation of the existing results on conditions described in (2) and (3), which we carry out as pointfreely as possible, avoiding calculations inside point-set spaces. 

Our starting point is the observation that for any frame $L$ there is an order isomorphism between the collection of \textit{sobrifications} of the subspaces of $\pt(L)$ (ordered under subspace inclusion) and that of the \textit{spatializations} of the sublocales of $L$.  
Both these collections are coframes which we denote as $\sob [\ca{P}(\pt(L))]$ and $\spa[\mf{S}(L)]$, respectively. We will show that these two coframes are naturally seen as part of a commuting diagram in the category of coframes of the following form:

\begin{center}
\begin{tikzcd}[row sep=large, column sep = large]
\spa [\mf{S}(L)]  
\arrow{r}{\pt(\cong)}  
& \sob[\ca{P}(\pt(L))] 
\arrow[hookrightarrow]{d}  \\
\mf{S}(L) 
\arrow{r}{\pt} 
\arrow[twoheadrightarrow]{u}{\spa} 
& \ca{P}(\pt(L))  
\end{tikzcd}
\end{center}

 Our investigation will proceed as an exploration of the three conditions above, keeping in mind that condition (1) amounts to the right vertical arrow being an isomorphism, that condition (2) amounts to the left vertical arrow being an isomorphism, and that condition (3) amounts to both of them being isomorphisms. 

On one hand the aim of this article is to gather results already present in various places in the literature in a cohesive narrative. On the other hand, we will also prove new results that stem naturally from our approach, in particular from adding condition (1) to the picture, and from analyzing all three conditions in terms of prime elements. We show that frames $L$ satisfying condition (1) are those such that the primes of $L$ are all weakly covered. Coveredness is studied in \cite{banaschewski15} and \cite{picado19}. We will prove choice-free versions of the results in \cite{niefield87} characterizing frames satisfying condition (2). In particular, we use a notion of essential prime that does not assume the existence of enough minimal primes above an element. We will characterize frames satisfying condition (3) as those frames $L$ such that for every element in $L$ there is a prime above it satisfying a strong version of essentiality, which we call \tc{absolute essentiality}. 

\section{Preliminaries}

We assume familiarity with the basic notions of pointfree topology. Nonetheless, let us give reminders of what specifically are the basic notions that we will use without mention in this paper.

\begin{itemize}
    \item The adjunction $\Om:\mathbf{Top}\lra \mathbf{Frm}^{op}:\pt$ with $\Om \dashv \pt$ connects the categories of frames with that of topological spaces. The functor $\Om$ assigns to each space its lattice of opens, and $\pt$ assigns to a frame $L$ the collection of the frame maps $f:L\ra 2$, topologized by setting the opens to be exactly the sets of the form $\{f: L\ra 2: f(a)=1\}$ for some $a\in L$. 
    \item A frame $L$ is \textit{spatial} if for $a,b\in L$ whenever $a\nleq b$ there is some frame map $f: L \ra 2$ such that $f(a)=1\neq f(b)$. Spatial frames are exactly those of the form $\Om (X)$ for some space $X$.
    \item A space is \textit{sober} if every irreducible closed set is the closure of a unique point. Sober spaces are exactly those of the form $\pt (L)$ for some frame $L$.
    \item The adjunction $\Om \dashv \pt$ restricts to a dual equivalence of categories between spatial frames and sober spaces.
    \item The category of sober spaces is a full reflective subcategory of $\mathbf{Top}$. For each space $X$ we have a \textit{sobrification} map $N: X\ra \pt (\Om (X))$ mapping each point $x\in X$ to the map $f_x: \Om (X)\ra 2$ defined as $f(U)=1$ if and only if $x\in U$. 
    \item The category of spatial frames is a full reflective subcategory of $\mathbf{Frm}$. For each frame we have a \textit{spatialization} map $\phi: L \ra \Om (\pt (L))$ which sends each $a\in L$ to $\{f: L\ra 2: f(a)=1\}$.
    \item We call $\mathbf{Loc}$ the category $\mathbf{Frm}^{op}$, and we call its
    objects \textit{locales}. Maps in the category of locales have a concrete description: they can be characterized as the right adjoints of frame maps (since frame maps preserve all joins, they always have right adjoints). 
    \item A \textit{sublocale} of a locale $L$ is a subset $S\se L$ such that it is closed under arbitrary meets, and such that $s\in S$ implies $x\ra s\in S$ for every $x\in L$. This is equivalent to $S\se L$ being a locale in the inherited order, and the subset inclusion being a map in $\mathbf{Loc}$.
    \item Sublocales of $L$ are closed under arbitrary intersections, and so the collection $\SL$ of all sublocales of $L$, ordered under set inclusion, is a complete lattice. Joins in $\SL$ are computed as $\bigvee_i S_i=\{\bigwedge M: M\se \bigcup_i S_i\}$.
    \item In the coframe $\SL$ the bottom element is the sublocale $\{1\}$ and the top element is $L$.
    \item Embedded in $\SL$ we have the coframe of \textit{closed sublocales} which is isomorphic to $L^{op}$. The closed sublocale $\cl (a)\se L$ is defined to be ${\uparrow}a$ for $a\in L$.
    \item Embedded in $\SL$ we also have the frame of \textit{open sublocales} which is isomorphic to $L$. The open sublocale $\op (a)$ is defined to be $\{a\ra x: x\in L\}$ for $a\in L$.
    \item The sublocales $\op (a)$ and $\cl (a)$ are complements of one another in the coframe $\SL$ for any element $a\in L$.
    \item Furthermore, open and closed sublocales generate the coframe $\SL$ in the sense that for each $S\in \SL$ we have $S=\bigcap \{\op (x)\vee \cl (y): S\se \op (x)\vee \cl (y)\}$.
    
\end{itemize}

\subsection{The axiom $T_D$}

 A space $X$ is defined to be $T_D$ if every point $x\in X$ has some neighborhood $U$ with $U{\sm}\{x\}$ open. This is an axiom stronger than $T_0$ and weaker than $T_1$ and it was introduced in \cite{Aull62}. It has been used (for instance in \cite{Pultr94}) in order to answer the question of when a topological space can be completely recovered from its frame of opens. The \textit{Skula space} of a space $X$, denoted as $Sk(X)$, is the space defined as follows. The set of points is the same set of points as $X$, while the topology is the \textit{Skula topology}, that is the one generated by the opens of $X$ together with their complements. The following characterizations of $T_D$ spaces can be found in \cite{picadopultr2011frames}.
 
 \begin{proposition}
 The following are equivalent for a $T_0$ space $X$.
 \begin{enumerate}
     \item The space is $T_D$.
     \item For no $x\in X$ do we have that the dualization of the inclusion $X{\sm}\{x\}\se X$ is a frame isomorphism.
     \item If $Y\nsubseteq Z$ then $\Om'(Y)\nsubseteq \Om'(Z)$, for all subspaces $Y,Z\se X$.
     \item The Skula space $Sk(X)$ is discrete.

 \end{enumerate}
 \end{proposition}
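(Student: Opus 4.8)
The plan is to prove the equivalences by first fixing the meaning of $\Om'(Y)$ and then establishing $(1)\Leftrightarrow(2)$, $(1)\Leftrightarrow(4)$, and finally $(1)\Leftrightarrow(3)$. For a subspace $Y\se X$ the inclusion dualizes to the frame surjection $\Om(X)\to\Om(Y)$, $U\mapsto U\cap Y$, and $\Om'(Y)$ denotes the associated sublocale of $\Om(X)$, i.e. the image of the right adjoint localic map. I would record at the outset the nucleus $\nu_Y$ of this sublocale, namely $\nu_Y(U)=\bigcup\{W\in\Om(X): W\cap Y\se U\}$, together with the standard fact that sublocale inclusion is antitone in the pointwise order of nuclei, so that $\Om'(Y)\se\Om'(Z)$ holds iff $\nu_Z\le\nu_Y$. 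Order-preservation of $Y\mapsto\Om'(Y)$ is then immediate, and condition (3) is precisely the order-reflection of this assignment.

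For $(1)\Leftrightarrow(2)$ I would analyze the frame map $\Om(X)\to\Om(X\sm\{x\})$, $U\mapsto U\sm\{x\}$, which is always surjective. It fails to be injective exactly when two distinct opens have the same trace on $X\sm\{x\}$; such a pair must differ only in the point $x$, which forces an open $U\ni x$ with $U\sm\{x\}$ open, i.e. the $T_D$ condition at $x$. Hence the dualization is an isomorphism iff $x$ is not a $T_D$ point, and (2) (no such $x$) is literally the statement that every point is $T_D$. For $(1)\Leftrightarrow(4)$ I would use that a base for the Skula topology is given by the locally closed sets $U\sm V$ with $U,V\in\Om(X)$; thus $Sk(X)$ is discrete iff every singleton is locally closed, and a short computation shows $\{x\}$ is locally closed iff there is an open $U\ni x$ with $U\sm\{x\}$ open, again the $T_D$ condition at $x$.

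The remaining equivalence $(1)\Leftrightarrow(3)$ I would prove via the nucleus formula. For $(1)\Rightarrow(3)$, assume $T_D$ and $Y\nsubseteq Z$; pick $x\in Y\sm Z$ and an open $U_x\ni x$ with $U:=U_x\sm\{x\}$ open. On one hand $x\notin\nu_Y(U)$, since any $W$ with $W\cap Y\se U$ and $x\in W$ would put $x\in W\cap Y\se U=U_x\sm\{x\}$, a contradiction. On the other hand $x\in\nu_Z(U)$, because $U_x\cap Z\se U_x\sm\{x\}=U$ (as $x\notin Z$), so $U_x$ witnesses $x\in\nu_Z(U)$. Hence $\nu_Z(U)\nsubseteq\nu_Y(U)$, so $\Om'(Y)\nsubseteq\Om'(Z)$. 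For the converse $(3)\Rightarrow(1)$ I would argue contrapositively: if $x$ is not $T_D$, then by the $(1)\Leftrightarrow(2)$ analysis $\Om(X)\to\Om(X\sm\{x\})$ is an isomorphism, whence $\Om'(X\sm\{x\})=\Om(X)=\Om'(X)$; taking $Y=X$ and $Z=X\sm\{x\}$ gives $Y\nsubseteq Z$ while $\Om'(Y)\se\Om'(Z)$, so (3) fails.

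The main obstacle I anticipate is bookkeeping rather than depth: getting the direction of the correspondence between sublocale inclusion and the order on nuclei right, and verifying that the induced-sublocale nucleus is correctly given by $\nu_Y(U)=\bigcup\{W\in\Om(X): W\cap Y\se U\}$. Once the nucleus formula and its order-reversal are in hand, each implication reduces to a single one-point computation.
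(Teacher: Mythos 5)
Your proof is correct, but note that there is nothing in the paper to compare it against: the paper states this proposition as a known result, citing \cite{picadopultr2011frames}, and gives no proof of its own. Your reading of $\Om'(Y)$ as the sublocale of $\Om(X)$ induced by the subspace $Y$ --- the fixed-point set of the nucleus $\nu_Y(U)=\bigcup\{W\in \Om(X):W\cap Y\se U\}$ --- is the intended one, and it matches how the paper later uses the result (``a space $X$ is $T_D$ if and only if different subspaces of $X$ induce different spatial sublocales''). The individual steps all check out: the antitone correspondence between nuclei and their fixsets ($\Om'(Y)\se \Om'(Z)$ iff $\nu_Z\leq \nu_Y$) is applied in the correct direction; in $(1)\Rightarrow(3)$ the two one-point computations are right ($x\in\nu_Z(U)$ witnessed by $U_x$ since $x\notin Z$, and $x\notin\nu_Y(U)$ since $x\in Y$ would force $x\in U_x{\sm}\{x\}$); the equivalence of (1) and (2) via injectivity of the always-surjective restriction map is sound, as is its reuse in the contrapositive of $(3)\Rightarrow(1)$; and the locally-closed-singletons argument for (4) is the standard one. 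Two cosmetic remarks: the paper elsewhere treats sublocales as concrete subsets of the frame rather than via nuclei, so a version of your argument phrased with $\Om'(Y)=\{\nu_Y(U):U\in\Om(X)\}$ as a subset would sit more naturally in its framework; and your argument never actually invokes the $T_0$ hypothesis, which is harmless here (it is needed only to place the proposition in the paper's broader discussion, where $T_D$ is compared with sobriety among $T_0$ spaces).
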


One can exhibit the role of the axiom $T_D$ in pointfree topology by comparing it to sobriety. This has been done in detail in \cite{banaschewskitd}. In particular, in the context of $T_0$ spaces, the two axioms mirror each other in that sober spaces are maximal in the same sense in which $T_D$ spaces are minimal.

\begin{itemize}
    \item A space $X$ is sober if and only if we can never have a nontrivial subspace inclusion $X\inclu Y$ such that its dualization is an isomorphism.
    \item A space $X$ is $T_D$ if and only if we can never have a nontrivial subspace inclusion $Y\inclu X$ such that its dualization is an isomorphism. 
\end{itemize}
Comparing the two axioms with respect to their implications for subspaces and sublocales yields the following.
\begin{itemize}
    \item A space $X$ is sober if and only all spatial sublocales of $\Om (X)$ are induced by subspaces of $X$.
    \item A space $X$ is $T_D$ if and only if different subspaces of $X$ induce different spatial sublocales.
\end{itemize}

\section{The coframe of spatializations}

We will look at spatialization as an interior on the coframe $\SL$ of sublocales of a frame $L$. Since we will work on the coframe $\SL$ of sublocales of a frame $L$, we will adapt the terminology from frame theory by just adding the prefix ``co-" to the classical notion. For example, for a coframe $C$ we will call ``conucleus" an operator $\nu:C\ra C$ which is a nucleus on $C^{op}$, ``cosublocale" a subset $S\se C$ which is a sublocale of $C^{op}$, and so on. We also use the following pieces of notation.
\begin{itemize}
    \item For a frame $L$, we call $\pt(L)$ its prime spectrum.
    \item For a complete lattice $L$ and a subset $Y\se L$, we call $\mathfrak{M}(Y)$ the ordered collection $\{\bwe M:M\se Y\}$. Similarly, we call $\mathfrak{m}(Y)$ the collection $\{\bwe M:M\se Y,M\mb{ is finite}\}$.
\end{itemize}

The first step is to notice that the spatialization $\phi_L:L\epi \Om(\pt(L))$ of a frame $L$, as any frame surjection, determines a sublocale of $L$. We now show how to obtain this sublocale explicitly. Before doing this, we will gather some useful facts that will simplify our calculations with sublocales. First of all, let us remind the reader that for every frame $L$ and every element $x\in L$, there exists a sublocale $\bl(x)$ defined as $\bca \{S\in \SL:x\in S\}$. This is clearly the smallest sublocale containing the element $x$. The reason behind this notation is that ``$\bl$" stands for ``Boolean" - the sublocales of $L$ that are Boolean algebras coincide with those of the form $\bl(x)$ for some $y\in L$. This is not a fact that we will use in our analysis, however. We provide proofs for the facts on primes and Boolean sublocales that we show, but proofs of these results are also in \cite{picadopultr2011frames}. Before we start with basic facts about Boolean sublocales, let us look at very general properties of the Heyting implication in frames. 
\begin{lemma}\label{impliadj}
For any frame $L$ we have a contravariant Galois connection
\[
-\ra x:L\lra L:-\ra x
\]
\end{lemma}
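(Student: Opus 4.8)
The plan is to recognize that the asserted ``contravariant Galois connection'' is nothing more than the self-adjunction of Heyting implication with a fixed right argument, which falls straight out of residuation in the complete Heyting algebra $L$. First I would make explicit what the statement claims: writing $F=(-\ra x)\colon L\to L$, a contravariant Galois connection whose left and right legs are \emph{both} $F$ amounts to the single biconditional
\[
b\le a\ra x \quad\Longleftrightarrow\quad a\le b\ra x
\]
for all $a,b\in L$, together with the observation that $F$ is order-reversing.

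The key step uses that every frame is a complete Heyting algebra, so that $\ra$ is characterized by the residuation law $a\we b\le c$ iff $a\le b\ra c$. Applying this twice, and exploiting commutativity of the meet in between, I would argue
\[
b\le a\ra x \iff a\we b\le x \iff b\we a\le x \iff a\le b\ra x,
\]
which is exactly the required biconditional. Antitonicity of $F$ then follows: if $a\le a'$ then $a\we(a'\ra x)\le a'\we(a'\ra x)\le x$, where the last inequality is residuation applied to $a'\ra x\le a'\ra x$; a final use of residuation yields $a'\ra x\le a\ra x$. (Alternatively, order-reversal is a formal consequence of the biconditional alone.)

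I do not expect a genuine obstacle here: the entire content is the two applications of residuation sandwiching the commutativity of $\we$. The only point requiring a moment's care is the bookkeeping of what a \emph{self-dual} antitone Galois connection means --- namely that the two legs of the connection are the same map $-\ra x$, so that the adjunction condition is symmetric in $a$ and $b$ and is witnessed precisely by the symmetry of the meet. Once that is unwound, the verification is the single displayed chain of equivalences above.
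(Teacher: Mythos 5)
Your proof is correct and is essentially the same as the paper's: both fix $x$ and establish the biconditional $b\le a\ra x \iff a\we b\le x \iff a\le b\ra x$ by two applications of the residuation (Heyting adjunction) law, with the symmetry of $\we$ doing the work. Your additional explicit verification of antitonicity is fine but, as you note yourself, redundant, since it follows formally from the biconditional.
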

\begin{proof}
Let us fix an element $x\in L$. For any two elements $a,b\in L$ we have that $a\leq b\ra x$ if and only if $a\we b\leq x$ and this holds if and only if $b\leq a\ra x$. Then, indeed the pair of maps described in the statement form a Galois connection.
\end{proof}
From the properties of antitone Galois connections, the following facts hold.
\begin{corollary}\label{implijoinmeet}
For any frame $L$ and every $x\in L$ we have the following.
\begin{enumerate}
    \item $\bve_i y_i\ra x=\bwe(y_i\ra x)$ for every collection $y_i\in L$.
    \item $y\leq (y\ra x)\ra x$ for every $y\in L$
    \item $y\ra x=((y\ra x)\ra x)\ra x$ for every $y\in L$.
\end{enumerate}
\end{corollary}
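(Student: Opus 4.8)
The plan is to derive all three items from the standard formal properties of an arbitrary antitone Galois connection, applied to the particular self-adjoint connection furnished by Lemma~\ref{impliadj}. Throughout I would write $f$ for the map $-\ra x:L\ra L$, so that the content of Lemma~\ref{impliadj} is the biconditional $a\leq f(b)\iff b\leq f(a)$, with the \emph{same} map $f$ appearing on both sides. In particular $f$ is antitone, and viewed as a map $L\ra L^{op}$ it is a left adjoint, hence preserves joins.

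For item (1), I would argue that $f(\bve_i y_i)=\bwe_i f(y_i)$ as follows. The inequality $f(\bve_i y_i)\leq f(y_j)$ for each $j$ is immediate from antitonicity (since $y_j\leq \bve_i y_i$), giving $f(\bve_i y_i)\leq \bwe_i f(y_i)$. For the reverse inequality I would use the biconditional twice: by the connection, $\bwe_i f(y_i)\leq f(\bve_i y_i)$ holds iff $\bve_i y_i\leq f(\bwe_i f(y_i))$, and for this it suffices that $y_j\leq f(\bwe_i f(y_i))$ for every $j$; applying the connection once more, this is equivalent to $\bwe_i f(y_i)\leq f(y_j)$, which holds trivially. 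Alternatively one simply invokes the fact that a left adjoint between posets sends joins to joins, here joins in $L$ to joins in $L^{op}$.

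Items (2) and (3) are the unit inequality and the triangle identity of the connection. For (2), feeding $a=y$ and $b=f(y)$ into the biconditional turns the tautology $f(y)\leq f(y)$ into $y\leq f(f(y))$, i.e. $y\leq(y\ra x)\ra x$. For (3), apply (2) with $y$ replaced by $f(y)$ to get $f(y)\leq f(f(f(y)))$; for the reverse, apply the antitone map $f$ to the inequality $y\leq f(f(y))$ of item (2), yielding $f(f(f(y)))\leq f(y)$. Combining the two gives $f(y)=f(f(f(y)))$, which is exactly $y\ra x=((y\ra x)\ra x)\ra x$.

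There is no real obstacle here: the only thing to watch is the direction-reversal at each step, and in particular the reverse inequality in (1), which is the one place where the Galois biconditional must be invoked twice rather than relying on bare monotonicity. Everything else is a mechanical instantiation of properties that hold for any contravariant Galois connection.
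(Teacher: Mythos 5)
Your proof is correct, and while it proves the same three statements from the same starting point (Lemma~\ref{impliadj}), its execution is genuinely more abstract than the paper's. The paper's own proof does not use the contravariant Galois connection formally in items (1) and (2): there it unfolds the Heyting adjunction $z\leq y\ra x \iff z\we y\leq x$ and, crucially for item (1), invokes the frame distributivity law to pass from $\bve_i(z\we y_i)\leq x$ to $z\we\bve_i y_i\leq x$. Your argument for (1), by contrast, uses only the biconditional $a\leq b\ra x\iff b\leq a\ra x$ (applied twice), so it needs neither distributivity nor the definition of $\ra$; it is the standard fact that a Galois connection sends joins to meets, and would be valid in any complete lattice equipped with such a connection. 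The two treatments of item (3) also diverge on the reverse inequality: the paper uses the biconditional to reduce $((y\ra x)\ra x)\ra x\leq y\ra x$ to $y\leq(((y\ra x)\ra x)\ra x)\ra x$ and then cites item (2) --- which, strictly speaking, requires applying (2) twice together with transitivity, a step left implicit --- whereas you simply apply the antitone map $-\ra x$ to the unit inequality $y\leq(y\ra x)\ra x$, which is shorter and avoids that implicit iteration. What your route buys is uniformity and generality (everything is a formal consequence of the connection, matching the paper's framing sentence ``From the properties of antitone Galois connections, the following facts hold''); what the paper's route buys is concreteness, staying close to the Heyting structure of the frame that the surrounding section goes on to exploit.
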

\begin{proof}
Let us prove each item as a chain of equivalences.
\begin{enumerate}
    \item Suppose that $y_i\in L$ is any collection. We have the following equivalences for each $z\in L$.
    \begin{prooftree}
     \AxiomC{$z\leq \bwe(y_i\ra x)$}
     \UnaryInfC{$z\leq (y_i\ra x)$}
     \UnaryInfC{$z\we y_i\leq x$}
     \UnaryInfC{$\bve_i z\we y_i\leq x$}
     \UnaryInfC{$z\we \bve_i y_i\leq x$}
     \UnaryInfC{$z\leq \bve_i y_i\ra x$}
     \end{prooftree}
    \item Suppose that $y\in L$. The following are equivalent.
      \begin{prooftree}
     \AxiomC{$y\we x\leq x$}
     \UnaryInfC{$y\we (y\ra x)\leq x$}
     \UnaryInfC{$y\leq (y\ra x)\ra x$}
     \end{prooftree}
   \item Suppose that $y\in L$. One inequality follows from item (2). For the other, by the antitone Galois connection in Lemma \ref{impliadj}, we have that these are equivalent.
     \begin{prooftree}
     \AxiomC{$((y\ra x)\ra x)\ra x\leq y\ra x$}
     \UnaryInfC{$y\leq (((y\ra x)\ra x)\ra x)\ra x$}
     \end{prooftree}
     The bottom line is always true by item (2).\qedhere
\end{enumerate}
\end{proof}
 Let us see some results on Boolean sublocales. Compare the two results in the lemma below with the following two facts about open sublocales.
 \begin{enumerate}
     \item For every $x\in L$ we have that $\op (x)=\{x\ra y:y\in L\}$
     \item For every $x\in L$ we have that $a\in \op (x)$ if and only if $a=x\ra a$.
 \end{enumerate}
\begin{lemma}\label{booleanfacts}
Let $L$ be a frame. We have the following facts.
\begin{enumerate}
\item For every $x\in L$ we have $\bl(x)=\{y\ra x:y\in L\}$.
\item For every $x\in L$ we have that $a\in \bl(x)$ if and only if $a=(a\ra x)\ra x$.
\item For every $x\in L$ the frame $\bl(x)$ is a Boolean algebra, where the negation of an element $a\in \bl(x)$ is $a\ra x$.
\end{enumerate}
\end{lemma}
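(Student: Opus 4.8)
The plan is to prove (1) by squeezing $\bl(x)$ between two inclusions using its characterization as the least sublocale containing $x$, then read off (2) immediately, and finally deduce (3) by exhibiting $a\mapsto a\ra x$ as a complementation. For (1), set $B=\{y\ra x:y\in L\}$ and first check that $B$ is a sublocale containing $x$. It contains $x$ since $1\ra x=x$, it is closed under arbitrary meets by Corollary \ref{implijoinmeet}(1) (so that $\bwe_i(y_i\ra x)=(\bve_i y_i)\ra x\in B$), and it satisfies the defining closure condition of sublocales via the currying identity $z\ra(y\ra x)=(z\we y)\ra x$, an immediate consequence of the adjunction in Lemma \ref{impliadj}. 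Since $\bl(x)$ is the least sublocale containing $x$, this gives $\bl(x)\se B$. Conversely, because $x\in\bl(x)$ and $\bl(x)$ is a sublocale, applying the closure condition to $s=x$ yields $y\ra x\in\bl(x)$ for every $y$, so $B\se\bl(x)$; hence $\bl(x)=B$.

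Part (2) then falls out at once: if $a=(a\ra x)\ra x$ then $a=y\ra x$ with $y=a\ra x$, so $a\in B=\bl(x)$; conversely, if $a=y\ra x\in\bl(x)$, then $(a\ra x)\ra x=((y\ra x)\ra x)\ra x=y\ra x=a$ by Corollary \ref{implijoinmeet}(3). For (3), I would first observe that every element of $\bl(x)$ lies above $x$ (since $y\ra x\geq x$) and that $1=0\ra x\in\bl(x)$, so the bottom of $\bl(x)$ is $x$ and the top is $1$. A frame is automatically distributive, so it suffices to show each $a\in\bl(x)$ is complemented, the candidate complement being $\neg a:=a\ra x\in\bl(x)$. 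The meet side is easy: $a\we\neg a\leq x$ by the adjunction while $a\we\neg a\geq x$ because both factors are $\geq x$, so $a\we\neg a=x$, the bottom.

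The join side is where care is needed, since joins in the sublocale $\bl(x)$ need not coincide with joins computed in $L$. I would either derive the internal join formula $a\sqcup b=((a\ra x)\we(b\ra x))\ra x$, checking by antitonicity of $-\ra x$ that it is the least element of $\bl(x)$ above both $a$ and $b$, and then compute $a\sqcup\neg a=(\,(a\ra x)\we((a\ra x)\ra x)\,)\ra x=x\ra x=1$; or argue directly that any $c\in\bl(x)$ above both $a$ and $a\ra x$ must satisfy $c\ra x\leq(a\ra x)\we((a\ra x)\ra x)=x$, whence $c=(c\ra x)\ra x=x\ra x=1$. Either route yields $a\vee\neg a=1$, so $\neg a$ is a genuine complement and $\bl(x)$ is Boolean with negation $a\mapsto a\ra x$. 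The main obstacle is precisely this join computation: one must avoid conflating the ambient join in $L$ with the join internal to the sublocale, and the clean resolution rests on the identities $p\we(p\ra x)=x$ for $p\geq x$ together with $x\ra x=1$.
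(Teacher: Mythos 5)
Your proposal is correct. Parts (1) and (2) coincide with the paper's own proof: the same squeeze argument for (1) (show $\{y\ra x:y\in L\}$ is a sublocale containing $x$ via currying and Corollary \ref{implijoinmeet}(1), then minimality), and the same use of Corollary \ref{implijoinmeet}(2)--(3) for (2). Where you genuinely diverge is part (3). The paper identifies $a\ra x$ as the Heyting pseudocomplement of $a$ inside $\bl(x)$ (using that the bottom of $\bl(x)$ is $x$ and that meets in $\bl(x)$ are ambient meets), and then concludes Booleanness from item (2), i.e.\ from the fact that double pseudocomplementation is the identity on $\bl(x)$ --- implicitly invoking the standard Heyting-algebra fact that $\neg\neg=\mathrm{id}$ forces Booleanness (via $\neg(a\vee\neg a)=\neg a\we\neg\neg a=0$, hence $a\vee\neg a=\neg\neg(a\vee\neg a)=1$). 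You instead verify complementation directly: $a\we(a\ra x)=x$ is the bottom, and for the join you derive the explicit internal formula $a\sqcup b=((a\ra x)\we(b\ra x))\ra x$ (or the equivalent direct argument that any upper bound $c\in\bl(x)$ of $a$ and $a\ra x$ equals $1$). Your route is longer but more self-contained, and it makes explicit a point the paper's proof leaves silent and somewhat garbled: that joins in the sublocale $\bl(x)$ are not joins in $L$, so the equation $a\sqcup\neg a=1$ genuinely requires the internal join computation. The paper's route is shorter, but its economy rests on the unstated double-negation characterization of Boolean algebras; yours buys transparency and, as a by-product, a usable formula for binary joins in Boolean sublocales.
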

\begin{proof}
Let us prove each item in turn.
\begin{enumerate}
    \item By definition of $\bl(x)$, it suffices to show that $\{y\ra x:y\in L\}$ is a sublocale containing $x$, and that it is the smallest such sublocale. We have that $1\ra x=x$, and so $x\in \{y\ra x:y\in L\}$. Let us show that this collection is a sublocale. For every $a\in L$, we have that $a\ra (y\ra x)=(a\we y)\ra x$, and so the collection is stable under left implication. For closure under meets, we notice that by item (1) of Corollary \ref{implijoinmeet} we have that $\bwe (y_i\ra x)=\bve_i y_i\ra x$. Finally, by the stability property of sublocales, whenever $x\in S$ for some sublocale $S\se L$, we must have $\{y\ra x:y\in L\}\se S$.
    \item If $a=(a\ra x)\ra x$ then $a\in \bl(x)$ by item (1). Suppose, now, that $a\in \bl(x)$. We have that $a\leq (a\ra x)\ra x$ by item (2) of Corollary \ref{implijoinmeet}. For the other inequality, let us use the assumption that $a\in \bl(x)$. In particular, let $y\in L$ be such that $a=y\ra x$. By item (3) of Corollary \ref{implijoinmeet}, we have $a=y\ra x=((y\ra x)\ra x)\ra x$.
    \item The collection $\bl(x)$ is a frame. To show that it is a Boolean algebra, it suffices to show that each element joins to $1$ with its Heyting pseudocomplement. First, let us show that the Heyting pseudocomplement of some $a\in \bl(x)$ is $a\ra x$. The bottom element of the sublocale $\bl(x)$ is $x$ itself, by item (1) and because $x\leq y\ra x$ for each $y\in L$; furthermore $x\in \bl(x)$ as $1\ra x=x$. For every $a\in \bl(x)$, we have that $y\leq a\ra x$ if and only if $y\we x\leq a$. Since meets in $\bl(x)$ are computed in the same way as in $L$, this means that the element $x\ra a$ satisfies the definition of pseudocomplement of $x$ in $\bl(x)$. Item (2), then, implies that every element of $\bl(x)$ is its own Heyting pseudocomplement, and so $\bl(x)$ is a Boolean algebra.\qedhere
\end{enumerate}
\end{proof}
Let us see some more generic locale theoretical facts which we will use to simplify our calculations.
\begin{lemma}\label{manylocalethings}
For a frame $L$ the following facts hold.
\begin{enumerate}
    \item For any collection $\{S_1,...,S_n\}\se L$ of sublocales, we have that 
    \[
    S_1\ve ...\ve S_n=\mathfrak{m}(S_1\cup ...\cup S_n).
    \]
   \item If $p\in \pt(L)$, then for each $x\in L$ we have that $x\ra p=p$ if and only if $x\nleq p$.  
   \item If $p\in L$ is such that $y\nleq p$ implies $y\ra p=p$ for every $y\in L$ then $p$ must be prime.
   \item For every prime $p\in \pt(L)$, we have that $\bl(p)=\{1,p\}$.
   \item If a sublocale $S$ has two elements, it is of the form $\bl(p)$ for some prime $p\in \pt(L)$.
 
 \item For a collection $p_i\in \pt(L)$ we have $\bve_i \bl(p_i)=\mathfrak{M}(\{p_i:i\in I\})$.
\end{enumerate}
\end{lemma}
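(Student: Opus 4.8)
The plan is to prove the six items in order, since items (4)--(6) build on the earlier ones. I would begin with (1), which is purely lattice-theoretic and is the one genuine calculation. The inclusion $\mathfrak{m}(S_1\cup\dots\cup S_n)\se S_1\ve\dots\ve S_n$ is immediate from the general join formula $\bigvee_i S_i=\{\bigwedge M:M\se\bigcup_i S_i\}$, since finite meets are a special case of arbitrary ones. For the reverse inclusion I would take an arbitrary $M\se S_1\cup\dots\cup S_n$ and \emph{regroup}: set $M_i=M\cap S_i$, so that $\bwe M=\bwe_{i=1}^{n}\bwe M_i$. Because each $S_i$ is closed under arbitrary meets (and contains $1=\bwe\emptyset$), each $\bwe M_i$ lies in $S_i$, and hence $\bwe M$ is a meet of the $n$ elements $\bwe M_1,\dots,\bwe M_n$, i.e.\ a finite meet of elements of $\bigcup_i S_i$. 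This exhibits $\bwe M\in\mathfrak{m}(S_1\cup\dots\cup S_n)$ and closes the argument.

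For (2), with $p$ prime I always have $p\le x\ra p$. If $x\nleq p$, then writing $z=x\ra p$ gives $z\we x\le p$, so primeness forces $z\le p$ (as $x\nleq p$), hence $x\ra p=p$. Conversely, if $x\le p$ then $x\ra p=1\neq p$ because primes are proper, so $x\ra p=p$ already forces $x\nleq p$. Item (3) is the converse direction and is where I expect to lean on the implication machinery: assuming the stated condition, if $a\we b\le p$ with $a\nleq p$, then the hypothesis gives $a\ra p=p$, and $a\we b\le p$ means $b\le a\ra p=p$; this is exactly meet-irreducibility of $p$. (The only delicate point is properness of $p$, which I would read off the paper's convention for $\pt(L)$, since the hypothesis itself leaves $p=1$ vacuously degenerate.) Item (4) then combines (2) with Lemma~\ref{booleanfacts}(1): $\bl(p)=\{y\ra p:y\in L\}$, and for each $y$ either $y\le p$, giving $y\ra p=1$, or $y\nleq p$, giving $y\ra p=p$ by (2); since $1=p\ra p$ and $p=1\ra p$ are both attained and $p\neq1$, we get $\bl(p)=\{1,p\}$.

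The assembly for (5) and (6) is where the pieces come together, and item (5) is the step I would flag as the main obstacle, not because it is hard but because it silently uses three separate facts. First, every sublocale contains $1$ (the empty meet), so a two-element sublocale must be $S=\{1,p\}$ with $p\neq1$. Second, closure of $S$ under left implication forces $y\ra p\in S=\{1,p\}$ for every $y$; and if $y\nleq p$ then $y\ra p=1$ would give $y=1\we y\le p$, a contradiction, so $y\ra p=p$. Third, this is precisely the hypothesis of (3), so $p$ is prime, and (4) identifies $S=\bl(p)$. Finally (6) follows from the join formula together with (4): since $\bigcup_i\bl(p_i)=\{1\}\cup\{p_i:i\in I\}$ and adjoining the top $1$ to a generating set changes no meets, $\bve_i\bl(p_i)=\{\bwe M:M\se\{1\}\cup\{p_i\}\}=\mathfrak{M}(\{p_i:i\in I\})$.
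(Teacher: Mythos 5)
Your proof is correct, but it is organized differently from the paper's. For items (1) and (6) the paper never invokes the join formula $\bigvee_i S_i=\{\bigwedge M:M\se\bigcup_i S_i\}$ from the preliminaries; instead it verifies by hand that the candidate sets $\mathfrak{m}(S_1\cup\dots\cup S_n)$ and $\mathfrak{M}(\{p_i:i\in I\})$ are themselves sublocales (closure under arbitrary meets plus stability under Heyting implication, via $y\ra\bwe_j s_j=\bwe_j(y\ra s_j)$) and that they are the smallest sublocales containing the given data. You instead cite the join formula and do the regrouping computation $M_i=M\cap S_i$, $\bwe M=\bwe_{i}\bwe M_i$, which lets you skip the stability-under-implication checks entirely; that is the economy your route buys, at the cost of leaning on a preliminary fact the paper chose to re-derive in this special case. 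Similarly, in (4) you read $\bl(p)=\{1,p\}$ off Lemma \ref{booleanfacts}(1) combined with item (2), whereas the paper re-verifies directly that $\{1,p\}$ is a sublocale and argues minimality; your shortcut is legitimate since Lemma \ref{booleanfacts} precedes this lemma in the paper. Items (2), (3) and (5) essentially match the paper's arguments, except that your (3) is a direct proof ($b\leq a\ra p=p$) where the paper argues by contradiction. Finally, your flag on the degenerate case in (3) is a genuine catch: for $p=1$ the hypothesis holds vacuously while $1$ is not prime under the paper's conventions, so the statement tacitly assumes $p\neq 1$; this costs nothing in the only place (3) is used, namely item (5), where $p\neq 1$ is guaranteed, but the paper does not comment on it.
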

\begin{proof}
Let us prove each item in turn.
\begin{enumerate}
\item We will show that $\mathfrak{m}(S_1\cup ...\cup S_n)$ is a sublocale, and that it is the smallest one which contains each $S_m$. Since each of the $S_m$'s is closed under meets, the collection $\mathfrak{m}(S_1\cup ...\cup S_n)$ is closed under all meets, too. If $y\in L$, let us consider an arbitrary element $s_1\we ...\we s_n\in \mathfrak{m}(S_1\cup ...\cup S_n)$; say that $s_m\in S_m$ for each $m\leq n$, with some $s_m$'s possibly equal to $1$. The element $y\ra (s_1\we ...\we s_n)$ is the same as $\bwe_{m\leq n}(y\ra s_m)$. Since every sublocale $S_m$ is stable under left implication, indeed we have $y\ra s_m\in S_m$ and so $y\ra (s_1\we ...\we _n)\in \mathfrak{m}(S_1\cup ...\cup S_n)$. This set is then a sublocale, and it clearly contains all of the $S_m$'s. For $T\se L$ a sublocale, we have that $S_1,...,S_n\se T$ implies that $\mathfrak{m}(S_1\cup ...\cup S_n)\se T$, by meet closure of $T$. 
\item Suppose that $p\in L$ is a prime element and that $x\nleq p$. We expand the definition of $x\ra p$ to obtain that this is the same as $\bve \{y\in L:y\leq p\mb{ or }x\leq p\}$, in which expression we have used primality of $p$. Then, $x\ra p=p$. Conversely, if for $x\in L$ we have that $x\ra p=p$ for a prime $p$, we deduce that $x\ra p\neq 1$, since primes are different from $1$ by definition. Then, we cannot have $x\leq p$. 

\item Suppose that $p\in L$ is such that $x\nleq p$ implies that $x\ra p=p$ for every $x\in L$. To show that $p$ is prime, we show that if we have $x\we y\leq p$, and $x\nleq p$, this implies $y\leq p$. Then, let us assume the antecedent. Since $y\nleq p$, by assumption $y\ra p=p$. This means, by definition of Heyting implication, that $y\we z\leq p$ implies that $z\leq p$. In particular, we deduce that $x\leq p$. So, $p$ is prime. 
\item First, let us show that for every prime $p$ we have that $\{1,p\}$ is a sublocale. This set is immediately seen to be closed under meets. For the stability condition, we notice that for $y\in L$ we have that $y\ra 1=1\in \{1,p\}$ for every $y\in L$; and that we also have $y\ra p=1\in \{1,p\}$ if $y\leq p$, and $y\ra p=p\in \{1,p\}$ if $y\nleq p$, by item (2). Then, indeed, $\{1,p\}$ is a sublocale. It is clearly the smallest sublocale containing $p$: since sublocales are closed under arbitrary meet, in particular they must all contain the empty meet $1$.
\item Suppose that for some element $x\in L$ we have that $\{1,p\}$ is a sublocale, where $1\neq p$. If $y\nleq p$ we must have that $y\ra p\neq 1$ and that $y\ra p\in \{1,p\}$, by stability under left implication. Then, $y\nleq p$ implies that $y\ra p=p$ for all $y\in L$; by item (3) this implies that $p$ is prime.

\item First, we show that $\mathfrak{M}(\{p_i:i\in I\})$ is a sublocale for every collection of primes $\{p_i:i\in I\}$. By its definition, the collection is closed under all meets. Suppose that $\bwe_ip_j$ is an arbitrary element of our collection, with each $p_j$ a prime in $\{p_i:i\in I\}$. We have that $y\ra \bwe_j p_j=\bwe_j(y\ra p_j)$. For every $j$, by item (2) we have that each $y\ra p_j$ is either $1$ or $p_j$, and so $y\ra \bwe_j p_j=\bwe_kp_k$, with each $p_k$ equal to some $p_j$. By the definition of this collection, this is in $\mathfrak{M}(\{p_i:i\in I\})$. \qedhere
\end{enumerate}
\end{proof}

The next results will be tools to define a spatialization operator on $\SL$.
\begin{lemma}
For any frame $L$ and any sublocale $S\se L$, we have that $\pt(S)=\pt(L)\cap S$.
\end{lemma}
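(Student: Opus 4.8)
The plan is to treat $\pt(L)$ and $\pt(S)$ as sets of prime \emph{elements}, as is already done in Lemma \ref{manylocalethings}: a point of a frame is identified with the prime $\bve\{a:f(a)=0\}$ below which it vanishes, and a prime $p$ conversely yields the point sending $a$ to $1$ iff $a\nleq p$. Under this reading $\pt(S)$ is a set of elements of $S$, hence of $L$, and the claim becomes: the primes of the frame $S$ are exactly the primes of $L$ that happen to lie in $S$. I would first record the structural facts that make the comparison possible. Since $S$ is a sublocale, its meets are computed as in $L$, its top is $1_S=1_L=\bwe\emptyset$, and for $s,t\in S$ the implication $s\ra t$ computed in $S$ agrees with the one computed in $L$: the element $s\ra t$ of $L$ lies in $S$ by the stability clause of the definition of sublocale, and it is the largest $u\in L$, a fortiori the largest $u\in S$, with $u\we s\leq t$.

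With these agreements in place the inclusion $\pt(L)\cap S\se \pt(S)$ is immediate: a prime $p$ of $L$ lying in $S$ satisfies $p\neq 1_S$, and the implication $a\we b\leq p\Rightarrow a\leq p\mbox{ or }b\leq p$, which holds for all $a,b\in L$, in particular holds for all $a,b\in S$, so $p$ is prime in $S$. The content is the reverse inclusion $\pt(S)\se \pt(L)$. Here I would fix $p\in \pt(S)$ and upgrade its primality from $S$ to $L$ using item (3) of Lemma \ref{manylocalethings}: it suffices to show that $x\nleq p$ implies $x\ra p=p$ for every $x\in L$, not merely for $x\in S$. The idea is to reduce an arbitrary $x\in L$ to its closure $j(x):=\bwe\{s\in S:x\leq s\}$, which lies in $S$ by meet-closure and is the least element of $S$ above $x$; since $x\leq j(x)$ and $x\nleq p$ we get $j(x)\nleq p$, so item (2) of Lemma \ref{manylocalethings} applied inside the frame $S$ gives $j(x)\ra p=p$.

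The main obstacle is then the single identity $x\ra p=j(x)\ra p$, which says that implication into the fixed $p\in S$ sees only the $S$-closure of its antecedent. One inequality is just antitonicity of $-\ra p$ together with $x\leq j(x)$. For the other I would set $w:=x\ra p$, note $w\in S$ by stability, observe $x\leq w\ra p$ because $w\we x\leq p$, and then use that $w\ra p$ again lies in $S$ by stability to conclude $j(x)\leq w\ra p$ by minimality of $j(x)$; unwinding, this reads $(x\ra p)\we j(x)\leq p$, i.e.\ $x\ra p\leq j(x)\ra p$. Combining the two inequalities closes the argument, and since $p\in S$ was given we obtain $p\in\pt(L)\cap S$. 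I expect this closure-insensitivity step to be where the sublocale axioms, specifically stability under left implication, do all the work; everything else is bookkeeping about where meets and implications are computed.
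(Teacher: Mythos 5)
Your proof is correct, but it runs on a different engine than the paper's. The paper proves the hard inclusion $\pt(S)\se \pt(L)$ by invoking the nucleus $\nu_S$ associated with the sublocale and its standard package of properties: given $x\we y\leq p$ with $x,y\in L$, monotonicity and binary-meet preservation of $\nu_S$ give $\nu_S(x)\we \nu_S(y)\leq \nu_S(p)=p$, primality of $p$ in $S$ picks out one factor, and inflationarity pulls the conclusion back to $x$ or $y$. You never touch meet preservation of the closure: you build $j(x)=\bwe\{s\in S:x\leq s\}$ directly from meet-closure of $S$, and then route everything through the Heyting-implication characterizations of primality proved in the paper (items (2) and (3) of Lemma \ref{manylocalethings}), with the identity $x\ra p=j(x)\ra p$ carried by the stability clause of the sublocale definition. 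The trade-off: the paper's argument is shorter but leans on the nucleus--sublocale correspondence and the fact that nuclei preserve binary meets, which are background facts not established in the paper; your argument is longer but self-contained relative to the paper's own lemmas and works purely with the ``sublocale as a concrete subset'' definition, which is in fact the methodological stance the paper announces in its abstract. It is worth noting that your key identity is exactly the fragment of nucleus theory the paper implicitly relies on (for $s\in S$ one has $\nu_S(x)\ra s=x\ra s$, from which meet preservation of $\nu_S$ is derivable), so in effect you re-prove the needed piece of that theory from first principles. One small point of hygiene: item (3) of Lemma \ref{manylocalethings} tacitly needs $p\neq 1$ (for $p=1$ its hypothesis is vacuous), but this is harmless in your application since $p\in\pt(S)$ forces $p\neq 1_S=1_L$, as you observe.
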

\begin{proof}
For the inclusion $\pt(L)\cap S\se \pt(S)$, let us assume that $p\in \pt(L)$ is a prime which is in $S$. The element is also prime as an element of $S$ since by definition this condition is formally stronger than $p\in S$ and $p$ being prime. Conversely, if $p\in S$ is prime as an element of $S$, suppose that $x\we y\leq p$ for $x,y\in L$. Then, by monotonicity of nuclei we have that $\nu_S(x\we y)\leq \nu_S(p)=p$, and by their meet preservation property also $\nu_S(x)\we \nu_S(y)\leq p$. By primality of $p$ in $S$, we have that either $\nu_S(x)\leq p$ or $\nu_S(y)\leq p$. Since nuclei are inflationary, if the former holds then $x\leq p$, and if the latter holds $y\leq p$. Then, $p$ is prime as an element of $L$, too. 
\end{proof}
\begin{lemma}
A frame is spatial if and only if all its elements are meets of primes.
\end{lemma}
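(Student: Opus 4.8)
The plan is to route everything through the dictionary between the two descriptions of a point of $L$: a frame map $f\colon L\ra 2$ and a prime element of $L$. So the first thing I would do is make this correspondence explicit and then use nothing else. Given a prime $p\in\pt(L)$, define $f_p\colon L\ra 2$ by letting $f_p(x)=1$ exactly when $x\nleq p$. Then $f_p$ preserves the top ($p\neq 1$), preserves binary meets (this is precisely primality of $p$), and preserves arbitrary joins (because $\bve_i x_i\leq p$ iff every $x_i\leq p$), so $f_p$ is a frame map. Conversely, from any frame map $f$ one recovers a prime by setting $q=\bve\{x\in L:f(x)=0\}$: since $f$ preserves joins, $f(q)=0$ and $q$ is the largest element sent to $0$, so $f(x)=1$ exactly when $x\nleq q$; and $q$ is prime because $f$ preserves finite meets, so $x\wedge y\leq q$ forces $f(x)\wedge f(y)=0$ and hence $x\leq q$ or $y\leq q$ (this is the content of Lemma~\ref{manylocalethings}(2)--(3)). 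The single fact I will reuse is therefore: every point is witnessed by a prime $p$ with $f(x)=1\iff x\nleq p$.

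For the forward implication I would assume $L$ spatial, fix $a\in L$, and set $m=\bwe\{p\in\pt(L):a\leq p\}$. Since $a$ lies below each prime in this set we get $a\leq m$ for free, so it suffices to exclude $a<m$. If $a<m$ then $m\nleq a$, and spatiality applied to the pair $(m,a)$ produces a frame map $f$ with $f(m)=1\neq f(a)$, so $f(a)=0$. Passing to the associated prime $q$ gives $a\leq q$ (from $f(a)=0$) and $m\nleq q$ (from $f(m)=1$). But $a\leq q$ places $q$ among the primes whose meet defines $m$, forcing $m\leq q$ and contradicting $m\nleq q$. Hence $a=m$ is a meet of primes.

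For the converse I would assume every element is a meet of primes and take $a\nleq b$. Writing $b$ as such a meet and noting each prime involved is $\geq b$, I get $b=\bwe\{p\in\pt(L):b\leq p\}$. Now $a\nleq b$ means $a$ fails to lie below this meet, so there must be a single prime $p\geq b$ with $a\nleq p$ (otherwise $a$ would sit below every prime above $b$, hence below $b$). The frame map $f_p$ then separates: $f_p(a)=1$ since $a\nleq p$, while $f_p(b)=0$ since $b\leq p$, which is exactly the defining condition for spatiality.

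I expect the only genuine work to live in the first paragraph, namely confirming that the two descriptions of a point match up — in particular that $f_p$ preserves arbitrary joins and that the $q$ extracted from an arbitrary $f$ is actually prime. Once that dictionary is secured, both implications reduce to one-line separation arguments: the forward direction is a contradiction against the defining meet of the primes above $a$, and the backward direction is the location of a single prime above $b$ that omits $a$.
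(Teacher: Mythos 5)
Your proof is correct and follows essentially the same route as the paper: both directions reduce to the separation-by-primes reformulation of spatiality, with the forward direction contradicting the definition of $\bwe\{p\in\pt(L):a\leq p\}$ and the converse extracting a single prime above $b$ omitting $a$. The only difference is that you explicitly verify the dictionary between frame maps $L\ra 2$ and prime elements, which the paper simply invokes as the definition of the prime spectrum; that extra paragraph is sound but not a new idea.
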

\begin{proof}
By definition of the prime spectrum of a frame, we know that a frame is spatial if and only if the map $\phi'_L:L\epi \Om(\pt(L))$ is injective, that is, if whenever $a\nleq b$ we have some prime $p\in L$ such that $b\leq p$ and $a\nleq p$. If all elements of $L$ are meets of primes, each element must be the meet of all the primes above it. then whenever $a\nleq b$, there must be a prime $p$ with $b\leq p$ and $a\nleq p$, otherwise we would have that $\up b\cap \pt(L)\se \up a\cap \pt(L)$, from which $\bwe \{p\in \pt(L):a\leq p\}\leq \bwe \{p\in \pt(L):b\leq p\}$, contradicting our hypothesis that $s\nleq b$. Conversely, if the frame $L$ is spatial, and if we have some $a\in L$ such that $a\neq \bwe _ip_i$ for every collection $p_i\in \pt(L)$, we also have that $a<\bwe \{p\in \pt(L):a\leq p\}=:a'$. But since these two elements of $L$ have the same primes above them, we deduce that $\phi_L'(a)=\phi_L'(a')$, contradicting spatiality of $L$.
\end{proof}
\begin{lemma}\label{spaint}
For any frame $L$, its spatialization sublocale is $\mathfrak{M}(\pt(L))$.
\end{lemma}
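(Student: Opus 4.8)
The plan is to use the standard correspondence between frame surjections and sublocales: the sublocale of $L$ determined by a surjection $f$ is the image of its right adjoint $f_*$, equivalently the set of fixed points of the associated nucleus $f_*\circ f$. So I would compute the nucleus $\nu = (\phi_L)_* \circ \phi_L$ attached to the spatialization $\phi_L: L \epi \Om(\pt(L))$ explicitly, and then simply read off which elements it fixes.

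First I would compute the right adjoint $(\phi_L)_*$. Identifying each point of $\pt(L)$ with the prime $p$ it determines, we have $\phi_L(a) = \{p \in \pt(L): a \nleq p\}$, and since $\phi_L$ is onto, every open of $\pt(L)$ is of this form. For $a, c \in L$ the inclusion $\phi_L(a) \se \phi_L(c)$ says precisely that every prime above $c$ lies above $a$, so that $a \leq \bwe\{p \in \pt(L): c \leq p\}$; as this bound is itself such an $a$, the right adjoint is
\[
(\phi_L)_*(\phi_L(c)) = \bve\{a \in L: \phi_L(a) \se \phi_L(c)\} = \bwe\{p \in \pt(L): c \leq p\}.
\]
In particular the nucleus is $\nu(a) = \bwe\{p \in \pt(L): a \leq p\}$, the meet of all primes above $a$.

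It then remains to identify the fixed points of $\nu$ with $\mathfrak{M}(\pt(L))$. If $a = \nu(a)$ then $a$ is visibly a meet of primes, so it lies in $\mathfrak{M}(\pt(L))$. Conversely, if $a = \bwe M$ for some $M \se \pt(L)$, then $M \se \{p \in \pt(L): a \leq p\}$, whence $\nu(a) = \bwe\{p \in \pt(L): a \leq p\} \leq \bwe M = a$; since the reverse inequality $a \leq \nu(a)$ is automatic, $a$ is fixed. Thus the image of $(\phi_L)_*$, that is, the spatialization sublocale, is exactly $\mathfrak{M}(\pt(L))$.

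The one place to be careful is the right-adjoint computation together with the identification of points with primes; everything after that is the routine verification that the $\nu$-fixed points are the meets of primes. As a consistency check one can note, independently, that $\mathfrak{M}(\pt(L))$ really is a sublocale --- by item (6) of Lemma \ref{manylocalethings} it equals $\bve_{p \in \pt(L)} \bl(p)$ --- and that it is spatial with point space $\pt(L)$, using $\pt(S) = \pt(L) \cap S$ together with the characterization of spatial frames as those whose elements are meets of primes; this confirms that the surjection $L \epi \mathfrak{M}(\pt(L))$ is indeed a copy of the spatialization.
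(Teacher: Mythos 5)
Your proof is correct, and it takes a genuinely different route from the paper's. The paper argues by maximality: it translates the universal property of spatialization into the statement that the spatialization sublocale is the largest \emph{spatial} sublocale of $L$, checks that $\mathfrak{M}(\pt(L))$ is spatial (all its elements are meets of primes), and then shows that any spatial sublocale $T$ is contained in $\mathfrak{M}(\pt(L))$, using the two lemmas proved just before Lemma \ref{spaint} (that $\pt(S)=\pt(L)\cap S$, and that a frame is spatial iff all its elements are meets of primes). You instead work directly from the definition of the sublocale attached to the surjection $\phi_L$: you compute the right adjoint explicitly to obtain the nucleus $\nu(a)=\bwe\{p\in \pt(L):a\leq p\}$ and then identify its fixed points as exactly the meets of primes; both steps are carried out correctly, including the key verification that $\bwe\{p\in\pt(L):c\leq p\}$ is itself an element whose image under $\phi_L$ lies in $\phi_L(c)$. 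Your route has two advantages: it yields the explicit formula for the spatialization nucleus as a byproduct, and the fact that $\mathfrak{M}(\pt(L))$ is a sublocale comes for free (fixed-point sets of nuclei always are), whereas the paper needs item (6) of Lemma \ref{manylocalethings} for this. The paper's route, in exchange, avoids any adjoint computation, reuses its already-established lemmas, and isolates the characterization ``largest spatial sublocale'', which is the form in which the result is actually invoked later (for instance in the proof of Proposition \ref{sobspaadj8}). The one background fact you lean on --- that the sublocale induced by a frame surjection is the image of its right adjoint, equivalently the fixed-point set of the associated nucleus --- is standard and consistent with how the paper introduces the spatialization sublocale, though the paper never spells it out.
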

\begin{proof}
Translating the universal property of the frame spatialization into a condition on sublocales, we obtain that the spatialization sublocale of $L$ is the largest sublocale of $L$ which is spatial. The sublocale $\mathfrak{M}(\pt(L))$ is spatial, since by its definition all its elements are meets of primes. Suppose, now, that $T\se L$ is a spatial sublocale. Then, all its elements are meets of primes, and so we have $\mathfrak{M}(\pt(T))\se T$. We need to have $\pt(T)\se \pt(L)$. Since the operator $\mathfrak{M}$, too, is monotone, we also have $\mathfrak{M}(\pt(T))\se \mathfrak{M}(\pt(L))$. So, $T\se \mathfrak{M}(\pt(L))$.
\end{proof}

From now on, we will refer to $\mathfrak{M}(\pt(L))$ as $\spa (L)$ for any frame $L$. There is an alternative way of seeing the spatialization of a sublocale. 
\begin{proposition}
For every sublocale $S\se L$ we have that $\spa (S)=\bve \{\bl(p):p\in \pt(S)\}$.
\end{proposition}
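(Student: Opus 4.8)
The plan is to reduce the statement to Lemma~\ref{spaint} together with item~(6) of Lemma~\ref{manylocalethings}, after observing that every prime of $S$ is a prime of $L$. First I would apply Lemma~\ref{spaint} to the frame $S$ itself, which yields $\spa(S)=\mathfrak{M}(\pt(S))$. Here I should note that the meets defining $\mathfrak{M}(\pt(S))$ may be computed indifferently in $S$ or in $L$: since $S\se L$ is a sublocale it is closed under the meets of $L$, so the operator $\mathfrak{M}$ applied to a subset of $S$ produces the same element whether interpreted inside $S$ or inside $L$.

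Next I would invoke the earlier lemma stating $\pt(S)=\pt(L)\cap S$. This is the key bridge: it tells us that each $p\in \pt(S)$ is in particular a prime of $L$, so that the Boolean sublocale $\bl(p)$ is well defined as a sublocale of $L$ and item~(6) of Lemma~\ref{manylocalethings} applies to the family $\{p:p\in\pt(S)\}$. That item gives directly
\[
\bve\{\bl(p):p\in\pt(S)\}=\mathfrak{M}(\{p:p\in\pt(S)\})=\mathfrak{M}(\pt(S)),
\]
where the join is taken in $\SL$. Combining this with the previous identity yields $\spa(S)=\mathfrak{M}(\pt(S))=\bve\{\bl(p):p\in\pt(S)\}$, which is the claim.

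The only point demanding care — and the place where I expect any subtlety to lie — is the coherence of the ambient lattice in which the various operations are read: $\spa(S)$ is the spatialization sublocale of the frame $S$, the join $\bve$ in item~(6) is computed in $\SL$, and $\mathfrak{M}$ must be seen to agree across $S$ and $L$. All three reconcile because $S$, being a sublocale, inherits meets from $L$ and its own sublocales are precisely the sublocales of $L$ contained in $S$; once this is recorded, the argument is a short concatenation of the two cited results and requires no fresh computation.
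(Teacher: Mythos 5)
Your proof is correct and follows essentially the same route as the paper's: both reduce the claim to the identity $\bve\{\bl(p):p\in\pt(S)\}=\mathfrak{M}(\pt(S))$ from item (6) of Lemma \ref{manylocalethings}, combined with $\spa(S)=\mathfrak{M}(\pt(S))$ via Lemma \ref{spaint} and the fact that $\pt(S)=\pt(L)\cap S$. Your version is in fact slightly more careful than the paper's one-line proof, since you explicitly record that meets (and hence $\mathfrak{M}$) agree in $S$ and in $L$, and you cite item (6) correctly where the paper miswrites it as item (5).
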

\begin{proof}
By item (5) of \ref{manylocalethings}, for any sublocale $S\se L$ we have that $\bve \{\bl (p):p\in \pt(L)\}=\mathfrak{M}(\pt(S))$.
\end{proof}

Let us now show that the map $\spa:\SL\ra \SL$ is an interior operator. In the frame theoretical case, in contrast with the d-frame case, we can prove that this interior operator preserves finite joins too. As we will see, this implies that the collection of its fixpoints is actually a cosublocale of $\SL$. Before we prove this, let us see a more general result.

\begin{proposition}\label{sobspaadj8}
For every frame $L$ there is an adjunction of posets as below.
\[
\begin{tikzcd}[row sep=large, column sep = large]
\ca{P}(\pt(L))
\arrow[swap, r, bend left=40, "\mathfrak{M}"{name=F}]
& \SL
\arrow[l, bend left=40, "\pt"{name=F1}]
\arrow[phantom,"\dashv" rotate=-90,from=F1, to=F]
\end{tikzcd}
\]
\end{proposition}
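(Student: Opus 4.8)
The plan is to exhibit the adjunction as a monotone Galois connection, that is, to verify the single biconditional
\[
\mathfrak{M}(Y)\se S \iff Y\se \pt(S)
\]
for every $Y\se \pt(L)$ and every sublocale $S\in \SL$, with $\mathfrak{M}$ as the left adjoint. This biconditional characterizes the adjunction completely (and in particular forces both $\mathfrak{M}$ and $\pt$ to be monotone), so establishing it is all that is required. Before doing so I would record the two auxiliary observations that do the work: that $Y\se \mathfrak{M}(Y)$ always holds, because each $p\in Y$ is the meet of the singleton family $\{p\}$ and hence lies in $\mathfrak{M}(Y)$; and that $\pt(S)=\pt(L)\cap S$, which is exactly the content of the earlier lemma.

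For the forward implication I would assume $\mathfrak{M}(Y)\se S$. From $Y\se \mathfrak{M}(Y)\se S$ I obtain $Y\se S$, and since every element of $Y$ is prime in $L$, this gives $Y\se \pt(L)\cap S=\pt(S)$, using the lemma.

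For the backward implication I would assume $Y\se \pt(S)$. Then $Y\se S$, because $\pt(S)\se S$. As $S$ is a sublocale it is closed under arbitrary meets, so for every $M\se Y\se S$ the meet $\bwe M$ again lies in $S$; hence $\mathfrak{M}(Y)=\{\bwe M:M\se Y\}\se S$, as desired. (The empty meet $1=\bwe \emptyset$ causes no trouble, since $1\in S$ for every sublocale.)

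There is no genuine obstacle here, as each step is forced by the definitions; the only points demanding care are bookkeeping ones. First, one must keep straight the direction of the adjunction: it is $\mathfrak{M}\dashv \pt$ and not the reverse. The reverse adjunction $\pt\dashv \mathfrak{M}$ genuinely fails, since an infinite meet of primes may itself be prime without belonging to the family it is the meet of, so it is worth confirming the orientation before writing anything down. Second, the closure of sublocales under arbitrary meets must be invoked at precisely the point where $\mathfrak{M}(Y)\se S$ is deduced in the backward direction.
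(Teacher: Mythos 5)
Your proof is correct and takes essentially the same route as the paper: both establish the adjunction by verifying the Galois-connection biconditional $\mathfrak{M}(Y)\se S \iff Y\se \pt(S)$. The only difference is bookkeeping — you argue both directions directly from $Y\se\mathfrak{M}(Y)$, $\pt(S)=\pt(L)\cap S$, and meet-closure of sublocales, where the paper instead invokes monotonicity of the two maps together with the fact that $\mathfrak{M}(\pt(S))\se S$ is the spatialization sublocale, which amounts to the same content.
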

\begin{proof}
It is clear from their definition that both these maps are monotone. To show that we have an adjunction between posets, it is sufficient to show that for any subspace $Y\se \pt(L)$ and any sublocale $S\se L$ we have that $\mathfrak{M}(Y)\se S$ if and only if $Y\se \pt(L)$. Suppose that the former holds. Then, since $\pt$ is monotone we have $\pt(\mathfrak{M}(Y))\se \pt(S)$. We also have that $Y\se \pt(\mathfrak{M}(Y))$ since $\pt(Y)=Y$ by assumption and $\pt(Y)\se \pt(\mathfrak{M}(Y))$ by monotonicity of $\pt$. Then, the first of the desired implications holds. 
Suppose, now, that the latter holds. Since $\mathfrak{M}$ is monotone, we deduce that $\mathfrak{M}(Y)\se \mathfrak{M}(\pt(S))$. Since $\mathfrak{M}(\pt(S))\se S$ as the left hand side is just its spatialization sublocale, we are done. Then, indeed we have an adjunction $\mathfrak{M}\dashv \pt$. 
\end{proof}

\begin{observation}\label{obsejoins}
If we have a frame $L$ and a collection of sublocales $S_1,...,S_n$ of this frame, we have that by item (1) of Lemma \ref{manylocalethings} their join in $\SL$ is computed as $\mathfrak{m}(S_1\cup ...\cup S_n)$. Since primes cannot be nontrivial meets, this means that $\pt(S_1\ve ...\ve S_n)=\pt(S_1)\cup ...\cup \pt(S_n)$.
\end{observation}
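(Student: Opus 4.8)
The plan is to prove the two set inclusions separately, since the right-hand side is a union and the point-functor $\pt$ interacts well with each. For the inclusion $\pt(S_1)\cu \cdots \cu \pt(S_n)\se \pt(S_1\ve \cdots \ve S_n)$ I would invoke the earlier lemma that $\pt(S)=\pt(L)\cap S$ for any sublocale $S$. This immediately makes $\pt$ monotone on sublocales: if $S\se T$ then $\pt(S)=\pt(L)\cap S\se \pt(L)\cap T=\pt(T)$. Since each $S_i$ sits below the join $S_1\ve \cdots \ve S_n$, we get $\pt(S_i)\se \pt(S_1\ve \cdots \ve S_n)$ for every $i$, and taking the union of these gives the desired containment.

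For the reverse inclusion I would use item (1) of Lemma \ref{manylocalethings}, which computes the join explicitly as $S_1\ve \cdots \ve S_n=\mathfrak{m}(S_1\cu \cdots \cu S_n)$, i.e.\ as the collection of finite meets of elements drawn from the $S_i$. Let $p\in \pt(S_1\ve \cdots \ve S_n)$; then $p$ is prime in $L$ and $p=\bwe M$ for some finite $M\se S_1\cu \cdots \cu S_n$. The crucial step is that a prime cannot be a nontrivial finite meet: writing $M=\{s_1,\ldots,s_k\}$, we have $s_1\we \cdots \we s_k=p\leq p$, and the finite-meet form of primality forces $s_i\leq p$ for some $i$; since also $p\leq s_i$ (as $p$ is a meet containing $s_i$ as a factor), we conclude $p=s_i$. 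This $s_i$ lies in some $S_j$, so $p\in S_j$, and because $p$ is prime in $L$ we have $p\in \pt(L)\cap S_j=\pt(S_j)$. Hence $p\in \pt(S_1)\cu \cdots \cu \pt(S_n)$.

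The one point deserving care — and the only genuine obstacle — is the finite-meet version of primality, since the definition of prime in use is the binary one ($x\we y\leq p$ implies $x\leq p$ or $y\leq p$). I would first record the routine induction extending this to the statement that $s_1\we \cdots \we s_k\leq p$ implies $s_i\leq p$ for some $i$. The degenerate empty-meet case ($M=\emptyset$, so $\bwe M=1$) is ruled out because primes are by definition distinct from $1$; thus $M$ is nonempty and the induction applies. Once this is in hand, the whole argument is just bookkeeping with the two already-established identities $\pt(S)=\pt(L)\cap S$ and $S_1\ve \cdots \ve S_n=\mathfrak{m}(S_1\cu \cdots \cu S_n)$.
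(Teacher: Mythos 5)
Your proposal is correct and follows essentially the same route as the paper: the Observation's own justification rests precisely on the two facts you use, namely the description $S_1\ve\cdots\ve S_n=\mathfrak{m}(S_1\cup\cdots\cup S_n)$ from item (1) of Lemma \ref{manylocalethings} and the fact that a prime cannot be a nontrivial (finite) meet. Your additional details --- the monotonicity of $\pt$ via $\pt(S)=\pt(L)\cap S$, the induction extending binary primality to finite meets, and ruling out the empty meet since primes differ from $1$ --- are exactly the bookkeeping the paper leaves implicit.
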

\begin{proposition}\label{spaconucleus}
Spatialization $\spa:\SL\ra \SL$ is a conucleus: it is an interior operator which preserves finite joins.
\end{proposition}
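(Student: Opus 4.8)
The plan is to work throughout from the explicit description $\spa(S)=\mathfrak{M}(\pt(S))$ established in the preceding proposition, so that every claim reduces to a statement about the sets $\pt(S)=\pt(L)\cap S$ together with the operator $\mathfrak{M}$. The two easy interior-operator axioms then come for free. Since $\pt(S)\se S$ and $S$ is closed under meets, we get $\spa(S)=\mathfrak{M}(\pt(S))\se S$, so $\spa$ is deflationary. Monotonicity is just as direct: if $S\se T$ then $\pt(S)=\pt(L)\cap S\se \pt(L)\cap T=\pt(T)$, and applying the monotone operator $\mathfrak{M}$ gives $\spa(S)\se \spa(T)$.

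The only delicate axiom is idempotency, and here I would deliberately avoid the naive computation $\spa(\spa(S))=\mathfrak{M}(\pt(\mathfrak{M}(\pt(S))))$: closing this up would require knowing that $\pt(\mathfrak{M}(\pt(S)))=\pt(S)$, i.e.\ that a prime lying in $\mathfrak{M}(\pt(S))$ is itself one of the generating primes, which amounts to meet-irreducibility of primes under \emph{arbitrary} meets, something we are not entitled to assume. Instead I would argue that $\spa(S)$ is already a fixpoint of $\spa$. By construction every element of $\mathfrak{M}(\pt(S))$ is a meet of elements of $\pt(S)$, and since $\pt(S)\se \pt(L)\cap \mathfrak{M}(\pt(S))=\pt(\spa(S))$, every such element is in fact a meet of primes of the sublocale $\spa(S)$ itself; by the earlier characterization of spatial frames, $\spa(S)$ is therefore spatial. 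As $\spa(S)$ is the largest spatial sublocale contained in $S$, and a spatial sublocale is its own largest spatial sublocale, we conclude $\spa(\spa(S))=\spa(S)$. This establishes that $\spa$ is an interior operator.

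Finally, for preservation of finite joins I would lean on the two facts tailored to this setting. For sublocales $S_1,\dots,S_n$, Observation \ref{obsejoins} gives $\pt(S_1\ve\dots\ve S_n)=\pt(S_1)\cu\dots\cu\pt(S_n)$, whence $\spa(S_1\ve\dots\ve S_n)=\mathfrak{M}(\pt(S_1)\cu\dots\cu\pt(S_n))$. On the other side, writing each $\spa(S_j)=\bve\{\bl(p):p\in\pt(S_j)\}$ and using item (6) of Lemma \ref{manylocalethings} in the form $\mathfrak{M}(Y)=\bve\{\bl(p):p\in Y\}$ for a set $Y$ of primes, the join $\spa(S_1)\ve\dots\ve \spa(S_n)$ collapses to $\bve\{\bl(p):p\in \pt(S_1)\cu\dots\cu\pt(S_n)\}=\mathfrak{M}(\pt(S_1)\cu\dots\cu\pt(S_n))$. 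The two sides agree, and the empty case $\spa(\{1\})=\mathfrak{M}(\emptyset)=\{1\}$ holds since $1$ is never prime. The crux of the whole argument is that primes behave like join-irreducibles exactly at the finite level, which is what Observation \ref{obsejoins} isolates and is precisely the frame-specific input absent in the d-frame setting; the subtle point to get right is not to overextend this to infinite meets, which is why idempotency is routed through the spatial-fixpoint observation rather than through a direct calculation.
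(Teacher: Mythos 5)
Your proof is correct and is essentially the paper's own argument: deflation from meet-closure of sublocales, idempotency by showing that $\spa(S)$ is spatial and that spatial sublocales are fixpoints, and finite join preservation by combining Observation \ref{obsejoins} with join preservation of $\mathfrak{M}$. The only difference is local: where the paper gets join preservation of $\mathfrak{M}$ abstractly from the adjunction $\mathfrak{M}\dashv\pt$ of Proposition \ref{sobspaadj8}, you compute it concretely via the decomposition $\mathfrak{M}(Y)=\bve\{\bl(p):p\in Y\}$ of Lemma \ref{manylocalethings}(6), and you additionally spell out monotonicity, the nullary join case, and the spatiality of $\spa(S)$, all of which the paper leaves implicit.
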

\begin{proof}
For any sublocale $S\se L$, we have that $\pt(S)\se S$. Since sublocales are closed under meets, this implies that $\spa (S)\se \mathfrak{M}(S)=S$. Then, spatialization is decreasing in $\SL$. It is also idempotent: if $S$ is a spatial sublocale, all its elements are meets of primes and so $\spa (S)=S$. Now, let us show that the map preserves finite joins of $\SL$. The map $\mathfrak{M}$ preserved all colimits, since it is a left adjoint. On the other hand, the map $\pt$ preserves finite colimits by Observation \ref{obsejoins}. Then, their composition $\mathfrak{M}\circ \pt$ preserves finite joins.
\end{proof}

\begin{corollary}
For every frame $L$ the lattice of its spatial sublocales $\spa [\SL]$ is a coframe. Spatialization is a coframe map $\spa:\SL\ra \spa[\SL]$.
\end{corollary}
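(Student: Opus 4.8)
The plan is to derive this as a formal consequence of Proposition~\ref{spaconucleus}, which says that $\spa$ is a conucleus on the coframe $\SL$, by invoking the standard theory of nuclei and their algebras of fixpoints, read dually. Recall that $\SL$ is a coframe and that, by definition, a conucleus on a coframe $C$ is a nucleus on the frame $C^{op}$. Since $\spa$ is idempotent, its image $\spa[\SL]$ is exactly its set of fixpoints $\{S\in \SL:\spa(S)=S\}$, and these are precisely the spatial sublocales: indeed $\spa(S)=\mathfrak{M}(\pt(S))$ equals $S$ exactly when every element of $S$ is a meet of primes of $S$.

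First I would record the general lattice-theoretic fact dual to the classical statement that the fixpoints of a nucleus $j$ on a frame $F$ form a frame $F_j$, with $j:F\epi F_j$ a frame surjection (see \cite{picadopultr2011frames}). Dualizing: if $\nu$ is a conucleus on a coframe $C$, then its fixpoints $C_\nu$ form a coframe in which joins are computed as in $C$ while meets are computed as $\bwe^{C_\nu}x_i=\nu(\bwe^{C}x_i)$, and the corestriction $\nu:C\to C_\nu$ is a coframe surjection. The only point in this that genuinely uses the conucleus hypothesis, as opposed to mere interiority, is the coframe distributive law on $C_\nu$, which is the exact dual of the verification that $F_j$ satisfies the frame law and is where preservation of finite joins by $\nu$ enters.

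Applying this with $C=\SL$ and $\nu=\spa$ immediately yields that $\spa[\SL]$ is a coframe. It then remains to check that $\spa:\SL\to \spa[\SL]$ is a coframe map, i.e.\ that it preserves finite joins and arbitrary meets. Finite joins are handled by Proposition~\ref{spaconucleus}. For arbitrary meets, the meet in the target is $\bwe^{\spa[\SL]}_i\spa(S_i)=\spa(\bwe_i\spa(S_i))$, so I must verify $\spa(\bwe_i S_i)=\spa(\bwe_i\spa(S_i))$. Since $\spa$ is decreasing and idempotent, from $\spa(\bwe_i S_i)\le \bwe_i S_i\le S_j$ for each $j$ I get $\spa(\bwe_i S_i)\le \spa(S_j)$ for all $j$, hence $\spa(\bwe_i S_i)\le \spa(\bwe_i\spa(S_i))$; the reverse inequality is immediate from $\bwe_i\spa(S_i)\le \bwe_i S_i$ together with monotonicity.

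I expect the whole difficulty to be concentrated in the general fact of the second paragraph, and specifically in the coframe distributive law on the fixpoints, since that is the unique step requiring the full force of ``conucleus'' rather than bare interiority. Should one prefer not to cite the dualized nucleus theorem, this distributivity can be proved directly by transcribing the standard frame-of-fixpoints argument into the dual setting.
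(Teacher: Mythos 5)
Your proposal is correct and follows the same route as the paper: the paper's entire proof is the one-line observation that both claims follow from $\spa$ being a conucleus on $\SL$ (Proposition~\ref{spaconucleus}), which is precisely the dualized fixpoint-of-a-nucleus theorem you invoke and then verify in detail. Your explicit checks (coframe structure on the fixpoints, preservation of finite joins and of arbitrary meets via idempotence and monotonicity) are exactly the standard content the paper leaves implicit.
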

\begin{proof}
Both parts of the claim holds because $\spa$ is a conucleus on $\SL$.
\end{proof}

We now claim that the cosublocale $\spa [\SL]\se \SL$ corresponds exactly to the spatialization cosublocale of $\SL$. 

\begin{lemma}\label{booleanareprimes}
The primes of $\SLop$ are exactly the sublocales of the form $\bl(p)$ for some prime $p\in L$.
\end{lemma}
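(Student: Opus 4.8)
The plan is to translate the statement about the frame $\SLop$ into a statement about the coframe $\SL$, and then exploit the complementation of open and closed sublocales. Since meets in $\SLop$ are joins in $\SL$ and the order is reversed, an element $P\in \SL$ is a prime of $\SLop$ exactly when $P\neq \{1\}$ (the top of $\SLop$) and $P$ is \emph{join-prime} in $\SL$, meaning that $P\se A\ve B$ implies $P\se A$ or $P\se B$ for all sublocales $A,B$. I would then prove the two inclusions separately.

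For the inclusion showing that each $\bl(p)$ is such a prime, recall that $\bl(p)=\{1,p\}$ by item (4) of Lemma~\ref{manylocalethings}, so $\bl(p)\neq \{1\}$ since $p\neq 1$. Suppose $\bl(p)\se A\ve B$. By item (1) of Lemma~\ref{manylocalethings} we have $A\ve B=\mathfrak{m}(A\cu B)$, so $p$ is a finite meet of elements of $A\cu B$. As $p$ is prime it is (finitely) meet-irreducible, hence $p$ equals one of these elements and so $p\in A$ or $p\in B$; since every sublocale contains $1$, this gives $\bl(p)\se A$ or $\bl(p)\se B$. Thus $\bl(p)$ is join-prime.

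For the converse, I would take $P\neq \{1\}$ join-prime and set $b=\bwe P$, which lies in $P$ because sublocales are closed under arbitrary meets; it is the least element of $P$, and $b\neq 1$ since $P\neq \{1\}$. The claim is $P=\{1,b\}$. Suppose instead some $c\in P$ had $c\neq 1$ and $c\neq b$, so that $b<c<1$. Using that $\op(c)$ and $\cl(c)$ are complements with $\op(c)\ve \cl(c)=L$, finite distributivity in the coframe $\SL$ gives
\[
P=P\cap(\op(c)\ve \cl(c))=(P\cap \op(c))\ve (P\cap \cl(c)).
\]
By join-primality, $P\se \op(c)$ or $P\se \cl(c)$. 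The first is impossible: $c\in P$ but $c\notin \op(c)$, since $a\in \op(c)$ forces $a=c\ra a$ and hence would give $c=c\ra c=1$, contradicting $c\neq 1$. The second is also impossible: $P\se \cl(c)=\up c$ would force $b\geq c$, contradicting $b<c$. Hence no such $c$ exists, so $P=\{1,b\}$ is a two-element sublocale, which by item (5) of Lemma~\ref{manylocalethings} is of the form $\bl(p)$ for a prime $p$ (namely $p=b$).

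The routine half is the first inclusion. The crux is the converse, and specifically the observation that a join-prime sublocale cannot contain a third element: the argument hinges on splitting $P$ along the complementary pair $\op(c),\cl(c)$ and checking that each of the two possibilities returned by join-primality is blocked, one by the defining property of the open sublocale $\op(c)$ and the other by minimality of $b=\bwe P$. This is where I expect to have to be careful, in particular to justify the finite distributive law in $\SL$ and to handle the degenerate cases $c=1$ and $c=b$ so that the splitting is genuinely nontrivial.
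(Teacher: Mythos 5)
Your proof is correct and takes essentially the same approach as the paper: the easy direction uses $A\ve B=\mathfrak{m}(A\cup B)$ plus the fact that a prime cannot be a nontrivial finite meet, and the hard direction splits the sublocale along the complementary pair $\op(c),\cl(c)$ at a suitable element, exactly as the paper does (the paper argues the contrapositive, showing any sublocale with three elements is not join prime, with a WLOG choice of witness where you use the minimum $b=\bwe P$). The only cosmetic difference is your distributivity step, which is unnecessary: join-primality can be applied directly to $P\se L=\op(c)\ve\cl(c)$.
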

\begin{proof}
Suppose we have a sublocale $S\se L$ with more than one element, say $\{1,x,y\}\se S$ with all three these elements pairwise different. Suppose without losing generality that $x\nleq y$. This means that $y\notin \cl(x)$. We also have $x\ra x=1\neq x$, and so $x\notin \op (x)$. Since $\cl(x)\ve \op (x)=L$, we have that $S\se \op (x)\ve \cl(x)$, but we also have $S\nsubseteq \op (x)$ and $S\nsubseteq\cl (x)$, with witnesses $x$ and $y$, respectively. Then, $S$ is not join prime. Conversely, if $S=\{1,p\}$ and $S\se T\ve U$, in particular $\{p\}\se T\ve U=\mathfrak{m}(T\cup U)$. This implies that $\{p\}\se T\cup U$ since primes cannot be nontrivial meets. We must have either $p\in T$ or $p\in U$. If the former holds then $\{1,p\}\se T$ and if the latter holds then $\{1,p\}\se U$. Then, $\{1,p\}$ is join prime.
\end{proof}

\begin{proposition}\label{spatializationsublocaleassembly}
For every frame $L$ the coframe $\spa[\SL]$ is the spatialization cosublocale of $\SL$.
\end{proposition}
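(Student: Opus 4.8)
The plan is to compute both sides explicitly and show they coincide as subsets of $\SL$. First I would unwind the meaning of ``spatialization cosublocale of $\SL$'': by the co-prefix convention, a cosublocale of $\SL$ is a sublocale of the frame $\SLop$, so the spatialization cosublocale of $\SL$ is nothing but the spatialization sublocale of the frame $\SLop$. Applying Lemma \ref{spaint} to the frame $\SLop$, this is $\mathfrak{M}_{\SLop}(\pt(\SLop))$, the collection of all meets taken in $\SLop$ of families of primes of $\SLop$.

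Next I would make these data concrete. By Lemma \ref{booleanareprimes} we have $\pt(\SLop)=\{\bl(p):p\in\pt(L)\}$, and meets in $\SLop$ are exactly joins in $\SL$. Hence an arbitrary element of $\mathfrak{M}_{\SLop}(\pt(\SLop))$ is a join $\bigvee_{\SL}\{\bl(p):p\in Y\}$ for some $Y\se\pt(L)$, and by item (6) of Lemma \ref{manylocalethings} this join equals $\mathfrak{M}(Y)$. Therefore the spatialization cosublocale of $\SL$ is precisely $\{\mathfrak{M}(Y):Y\se\pt(L)\}$.

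Finally I would show that $\spa[\SL]$ is the same set. Since $\spa$ is a conucleus (Proposition \ref{spaconucleus}), $\spa[\SL]$ is its set of fixpoints, i.e.\ the spatial sublocales of $L$. If $S$ is spatial then $S=\spa(S)=\mathfrak{M}(\pt(S))$ with $\pt(S)=\pt(L)\cap S\se\pt(L)$, so $S$ has the form $\mathfrak{M}(Y)$; this gives one inclusion. Conversely, for any $Y\se\pt(L)$ every element of $\mathfrak{M}(Y)$ is a meet of primes, so $\mathfrak{M}(Y)$ is spatial, and a short computation using $Y\se\pt(\mathfrak{M}(Y))$ together with the idempotence and monotonicity of $\mathfrak{M}$ yields $\spa(\mathfrak{M}(Y))=\mathfrak{M}(\pt(\mathfrak{M}(Y)))=\mathfrak{M}(Y)$, so $\mathfrak{M}(Y)\in\spa[\SL]$. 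Combining the two inclusions gives $\spa[\SL]=\{\mathfrak{M}(Y):Y\se\pt(L)\}$, which is exactly the set computed above, establishing the claim.

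The point requiring the most care is the op-bookkeeping: one must consistently remember that the primes of $\SLop$ are the two-element Boolean sublocales $\bl(p)$, that ``meet'' in the cosublocale computation means join in $\SL$, and that item (6) of Lemma \ref{manylocalethings} is precisely the bridge translating these joins of $\bl(p)$'s back into the operator $\mathfrak{M}$ acting on subsets of $\pt(L)$. Once this dictionary is fixed, both sides reduce to $\{\mathfrak{M}(Y):Y\se\pt(L)\}$ and nothing further is needed; in particular no separate verification that $\spa[\SL]$ is a cosublocale is required, since it is exhibited as equal to one.
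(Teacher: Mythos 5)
Your proposal is correct and follows essentially the same route as the paper's own proof: dualize the description of the spatialization sublocale (Lemma \ref{spaint}), identify the primes of $\SLop$ as the sublocales $\bl(p)$ via Lemma \ref{booleanareprimes}, and translate joins of these back into the operator $\mathfrak{M}$ via item (6) of Lemma \ref{manylocalethings}. The only difference is that you spell out explicitly the final identification $\spa[\SL]=\{\mathfrak{M}(Y):Y\se \pt(L)\}$, which the paper leaves implicit (it is essentially contained in its earlier proposition that $\spa(S)=\bve\{\bl(p):p\in\pt(S)\}$).
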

\begin{proof}
The spatialization sublocale of a frame is the closure under meets of the ordered collection of its prime elements. Then the spatialization cosublocale of $\SL$ is the closure under joins of the ordered collection of its join primes, which by Lemma \ref{booleanareprimes} is $\{\bl(p):p\in \pt(L)\}$. 
\end{proof}

As a corollary, we obtain this well-known fact.
\begin{corollary}\label{totspaassemblyspa}
For any frame $L$, the frame $\SLop$ is spatial if and only if all sublocales of $L$ are spatial.
\end{corollary}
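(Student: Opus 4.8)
The plan is to deduce this from Proposition \ref{spatializationsublocaleassembly} together with the characterization of spatiality in terms of the spatialization sublocale. Recall from Lemma \ref{spaint}, applied to an arbitrary frame $M$ in place of $L$, that the spatialization sublocale $\mathfrak{M}(\pt(M))$ is the largest spatial sublocale of $M$. A frame $M$ is therefore spatial if and only if this largest spatial sublocale is the whole of $M$; equivalently, by the lemma characterizing spatiality, if and only if every element of $M$ is a meet of primes.

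I would apply this with $M=\SLop$. Dualizing, the spatialization sublocale of $\SLop$ is precisely what the excerpt calls the spatialization cosublocale of $\SL$. By Proposition \ref{spatializationsublocaleassembly}, this cosublocale is exactly $\spa[\SL]$, the collection of spatial sublocales of $L$. Hence $\SLop$ is spatial if and only if $\spa[\SL]$ is all of $\SLop$.

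Finally, since $\SLop$ has the same underlying set as $\SL$, namely all sublocales of $L$, the condition $\spa[\SL]=\SLop$ holds exactly when every sublocale of $L$ lies in $\spa[\SL]$, that is, when every sublocale of $L$ is spatial. This yields both directions of the equivalence at once.

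The step requiring the most care is the bookkeeping around the order-reversal: one must keep straight that ``spatialization sublocale of $\SLop$'' and ``spatialization cosublocale of $\SL$'' name the same subset, and that this subset is the fixpoint set $\spa[\SL]$ of the conucleus of Proposition \ref{spaconucleus} rather than anything computed inside $L$ directly. Once Proposition \ref{spatializationsublocaleassembly} is invoked, no further computation is needed.
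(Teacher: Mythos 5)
Your proposal is correct and follows essentially the same route as the paper: the paper states this result as an immediate corollary of Proposition \ref{spatializationsublocaleassembly} (with no written proof), and your argument simply makes that implicit reasoning explicit --- identifying the spatialization sublocale of $\SLop$ with the cosublocale $\spa[\SL]$ and using the fact that a frame is spatial precisely when its spatialization sublocale is the whole frame. The bookkeeping you flag (cosublocale of $\SL$ versus sublocale of $\SLop$, and that $\spa[\SL]$ consists exactly of the spatial sublocales by idempotence of the conucleus) is handled correctly.
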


\section{The coframe of sobrifications}
Suppose that we have the spectrum $\pt(L)$ of a frame $L$. We want to see sobrification as a closure operator on $\ca{P}(\pt(L))$ in the same way in which we have seen spatialization as an interior operator on $\SL$. In particular, we will see that there is a symmetry between spatialization on $\SL$ and sobrification on $\ca{P}(\pt(L))$. When we have an extremal monomorphism $Y\rightarrowtail X$ in $\bd{Top}$ we can turn it into a concrete subspace inclusion $Y\se X$. Recall that in a category $\ca{C}$ a morphism $f:A\ra B$ is an extremal monomorphism if it is a monomorphism and if $f=ge$ with $e$ an epimorphism, $e$ must be an isomorphism. It is very well-known that the epimorphisms in the category of topological spaces are exactly the surjections. The following result is also quite well-known, but let us prove it. 

\begin{lemma}\label{imagehomeo8}
In the category of topological spaces, a continuous function $f:Y\ra X$ is an extremal monomorphism if and only if it is an injection such that the map $Y\ra f[Y]$ defined as $y\mapsto f(x)$ is a homeomorphism.
\end{lemma}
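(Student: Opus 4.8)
The plan is to prove the two implications separately, using the two standard facts that the monomorphisms of $\bd{Top}$ are exactly the injective continuous maps and (as noted above) its epimorphisms are exactly the surjective continuous maps. The injectivity of monos I would record first by a one-point-space argument: if $f(y_1)=f(y_2)$, precompose $f$ with the two maps $1\ra Y$ picking out $y_1$ and $y_2$ and cancel. Everything else reduces to manipulating factorizations through the subset $f[Y]\se X$ equipped with the subspace topology, together with the continuous corestriction $g:Y\ra f[Y]$, $y\mapsto f(y)$, and the inclusion $\iota:f[Y]\inclu X$, so that $f=\iota\circ g$.

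For the forward direction, suppose $f$ is an extremal monomorphism. Being a monomorphism it is injective, so $g:Y\ra f[Y]$ is a continuous bijection, in particular a continuous surjection and hence an epimorphism of $\bd{Top}$. Applying the extremality clause to the factorization $f=\iota\circ g$ with $g$ epi forces $g$ to be an isomorphism, i.e. a homeomorphism onto $f[Y]$; this is exactly the stated conclusion.

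For the converse, suppose $f$ is injective with $g:Y\ra f[Y]$ a homeomorphism. Then $f$ is a monomorphism because it is injective. To check extremality, take any factorization $f=h\circ e$ with $e:Y\ra Z$ an epimorphism, i.e. a continuous surjection, and $h:Z\ra X$ continuous; I must show $e$ is a homeomorphism. Since $f=h\circ e$ is injective and $e$ is surjective, $e$ is a continuous bijection, so the only thing at stake is continuity of $e^{-1}$. This is the main obstacle, as a continuous bijection need not be a homeomorphism in general; the embedding hypothesis on $f$ is precisely what rescues it.

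To finish, I would observe that $h[Z]=h[e[Y]]=f[Y]$ because $e$ is surjective, so $h$ corestricts to a continuous map $\bar h:Z\ra f[Y]$. Cancelling the monomorphism $\iota$ in the identity $\iota\circ\bar h\circ e=\iota\circ g$ yields $\bar h\circ e=g$, whence $\bar h=g\circ e^{-1}$ and therefore $e^{-1}=g^{-1}\circ\bar h$. As $\bar h$ is continuous and $g$ is a homeomorphism, $g^{-1}$ is continuous, so $e^{-1}$ is continuous and $e$ is a homeomorphism, hence an isomorphism of $\bd{Top}$. This establishes extremality and completes the proof.
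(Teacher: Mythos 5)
Your proof is correct, and it takes a genuinely different route from the paper's in the harder direction. For ``embedding $\Rightarrow$ extremal monomorphism,'' the paper argues point-set-theoretically: given a factorization $f=g\circ e$ with $e$ an epimorphism, it first shows $e$ is a continuous bijection (as you do), and then proves $e$ is an \emph{open} map by an image/preimage computation --- using that $g$ is injective on $Z=e[Y]$, hence $e[U]=g^{-1}(g[e[U]])=g^{-1}(f[U])$, and then writing $f[U]=V\cap f[Y]$ via the embedding hypothesis so that $e[U]=g^{-1}(V)$ is open. You avoid this open-set chase entirely: you corestrict the second factor to $\bar h:Z\to f[Y]$, cancel the monomorphism $\iota$ to obtain $\bar h\circ e=g$, and exhibit $e^{-1}=g^{-1}\circ\bar h$ as a composite of continuous maps. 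This is cleaner and more categorical; the paper's computation is more elementary but longer, and your version makes explicit exactly where the embedding hypothesis enters (continuity of $g^{-1}$). In the other direction the mathematical content coincides but the phrasing differs: the paper argues contrapositively, building from a non-embedding $f$ an explicit factorization through an isomorphic copy of $f[Y]$ that witnesses failure of extremality, whereas you apply extremality directly to the canonical factorization $f=\iota\circ g$, noting that $g$ is a continuous bijection and hence an epimorphism, which forces $g$ to be an isomorphism. Both are sound; your direct version dispenses with the auxiliary copy $Z$, at the mild cost of invoking the characterization of monomorphisms in $\mathbf{Top}$ (which the paper also uses, but only implicitly).
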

\begin{proof}
Suppose that $f:Y\ra X$ is a continuous injection between topological spaces such that $f[Y]$ is homeomorphic to $Y$. Suppose that $f=g\circ e$ with $e$ an epimorphism, as depicted below.
\[
\begin{tikzcd}
{} & Z
\arrow{dr}{g} \\
Y 
\arrow{ur}{e} 
\arrow{rr}{f} 
&& X
\end{tikzcd}
\]
If we have $e(y_1)=e(y_2)$ for distinct points $y_1\neq y_2$ of $Y$, we also would have $f(y_1)=f(y_2)$ by the commutativity of the diagram. contradicting that $f$ is an injection. Since $e$ is an epimorphism, it is also a surjection. Then, $e$ must be a bijection. It only remains to show that it is an open map. Let us observe that $g$ cannot identify distinct elements of $e[Y]=Z$, or we would contradict injectivity of $f$. This means that for every subset $S\se Z$ we have that $g^{-1}(g[S])=S$. In particular, $g^{-1}(f[Y])=g^{-1}(g[e[Y]])=g^{-1}(g[Z])=Z$. Now, let $U\se Y$ be an open set. We have that $e[U]=g^{-1}(g[e[U]])$, by the observation we have just made. By assumption on $f$ and by commutativity of the diagram, $g[e[U]]=f[U]$ must be an open set of $f[Y]$: it must be of the form $V\cap f[Y]$ for some open $V\se X$. Then, $e[U]=g^{-1}(g[e[U]])=g^{-1}(f[U])=g^{-1}(V\cap f[Y])$ must be $g^{-1}(V)\cap g^{-1}(f[Y])=Z\cap g^{-1}(V)=g^{-1}(V)$, which is open in $Z$ by continuity of $g$. Then, $e[U]$ is open and this means that $e$ is an open continuous bijection, hence an isomorphism in $\bd{Top}$. 

For the converse, we show that if $f[Y]$ is not isomorphic to $Y$, we can find a factorization $f=g\circ e$ where $e$ is an epimorphism but not an isomorphism. Suppose that $f$ is a continuous injection but also that there is some open $U\se Y$ such that $f[U]$ is not open in $f[Y]$. Now let us define $Z$ to be an isomorphic copy of $f[Y]$. Let $e:Y\ra Z$ be the bijection defined as $y\mapsto f(y)$, and let $g:Z\ra X$ be subspace inclusion. We have that $e$ is a surjection, hence an epimorphism, but it is not an isomorphism since $f[U]$ is open and $U$ is not. By definition of $Z$ the triangle depicted above commutes. 
\end{proof}

It is also known that the subspace inclusions $Y\inclu X$ in the category of topological spaces are essentially the extremal monomorphisms, in the sense that on the one hand all subspace inclusions are extremal monomorphisms, and on the other hand every extremal monomorphism is a subspace inclusion up to homeomorphism, in the following sense: every extremal monomorphisms $f:Y\rightarrowtail X$ in $\bd{Top}$ is such that the map $Y\ra f[Y]$ acting as $y\mapsto f(y)$ is a homeomorphism making the following diagram commute
\[
\begin{tikzcd}
f[Y]
\arrow[dr,"i",hookrightarrow] \\
Y
\arrow{u}{f'} 
\arrow[r,rightarrowtail,"f"] 
& X
\end{tikzcd}
\]
It is clear that any subspace inclusion $Y\se X$ is such that the image of $Y$ under this map is homeomorphic to $Y$, since it is identical to $Y$. We have already seen the fact that the sobrification map $\psi_X:X\ra \pt(\Om(X))$ of a space $X$ is not in general bijective. When $X$ is a $T_0$ space, the map is always injective. In fact, we have more. We know that when $X$ is $T_0$, this map in general is an extremal monomorphism.

\begin{lemma}
If $X$ is a $T_0$ space, the sobrification map $\psi_X:X\rightarrowtail \pt(\Om(X))$ is an extremal monomorphism.
\end{lemma}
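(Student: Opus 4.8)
The plan is to invoke the characterization of extremal monomorphisms proved in Lemma \ref{imagehomeo8}: a continuous map between topological spaces is an extremal monomorphism precisely when it is an injection whose corestriction onto its image is a homeomorphism. So the proof naturally splits into two tasks: verifying that $\psi_X$ is injective, and verifying that the corestriction $X \ra \psi_X[X]$ acting as $x \mapsto f_x$ is a homeomorphism onto the subspace $\psi_X[X] \se \pt(\Om(X))$.

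Injectivity is exactly where the $T_0$ hypothesis is spent. Recall that $\psi_X$ sends a point $x$ to the frame map $f_x:\Om(X) \ra 2$ with $f_x(U)=1$ if and only if $x\in U$. If $\psi_X(x)=\psi_X(y)$, then $x$ and $y$ belong to exactly the same open sets of $X$; since $X$ is $T_0$, any two distinct points are separated by some open set, and hence $x=y$.

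For the homeomorphism onto the image I would work directly with the open sets of $\pt(\Om(X))$, which by definition are exactly the sets $\widetilde{U} := \{g:\Om(X)\ra 2 : g(U)=1\}$ for $U\in \Om(X)$. The two computations driving the argument are $\psi_X^{-1}(\widetilde{U}) = U$ for every open $U$, which gives continuity of $\psi_X$ at once, and the identity $\psi_X[U] = \widetilde{U}\cap \psi_X[X]$. The second says precisely that the image of every open set of $X$ is open in the subspace $\psi_X[X]$, so the corestriction is an open map; being also a continuous bijection onto its image (by injectivity), it is a homeomorphism. Together with the injectivity step and Lemma \ref{imagehomeo8}, this completes the proof.

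The only genuinely load-bearing step is the equality $\psi_X[U] = \widetilde{U}\cap \psi_X[X]$; everything else is bookkeeping. It unwinds immediately from the definitions: a point $f_x\in \psi_X[X]$ lies in $\widetilde{U}$ iff $f_x(U)=1$ iff $x\in U$, so the elements of $\psi_X[X]$ contained in $\widetilde{U}$ are exactly the images of the points of $U$. I do not anticipate any real obstacle here, since the content of the lemma is essentially that the topology of the sobrification is generated so as to mirror the topology of $X$ along $\psi_X$; the role of $T_0$ is simply to upgrade the tautological surjection onto the image to a genuine bijection, and thereby to a homeomorphism.
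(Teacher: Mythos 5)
Your proposal is correct and follows essentially the same route as the paper: both reduce the claim via Lemma \ref{imagehomeo8} to showing that $\psi_X[U]$ is open in the subspace $\psi_X[X]$, and both verify this by the same computation $\psi_X[U]=\{f\in \pt(\Om(X)):f(U)=1\}\cap \psi_X[X]$. The only difference is that you spell out the injectivity argument from the $T_0$ axiom, which the paper simply cites as a known fact.
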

\begin{proof}
Suppose that $X$ is a $T_0$ space. Its sobrification map is then a continuous injection. By Lemma \ref{imagehomeo8}, then, to show our claim it suffices to show that for every open  $U\se X$ the set $\psi_X[U]$ is open in $\psi_X[X]\se \pt(\Om(X))$. Suppose, then, that $U\se X$ is an open set. Recall that a typical open of $\pt(\Om(X))$ is of the form $\{f\in \pt(\Om(X)):f(U)=1\}$ for some open $U\se X$, and so an open of the subspace $\psi_X[X]$ is a set of the form $\{N_x\in \pt(\Om(X)):N_x(U)=1\}$ for some open $U\se X$. The set $\psi_X[U]$ is $\{N_x\in \pt(\Om(X)):x\in U\}=\{N_x\in \pt(\Om(X)):N_x(U)=1\}$, and so it is open in $\psi_X[X]$. Then, the sobrification map $\psi_X$ is an extremal monomorphism.
\end{proof}
This means that whenever we have $T_0$ topological space, we may see it up to homeomorphism as a subspace of its sobrification. 
\begin{lemma}\label{forward8}
If $f:X\ra Y$ is an injective function its forward image map preserves intersections.
\end{lemma}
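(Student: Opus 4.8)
The plan is to establish, for an arbitrary family $\{A_i\}_{i\in I}$ of subsets of $X$, the set equality
\[
f\left[\bca_{i\in I}A_i\right]=\bca_{i\in I}f[A_i],
\]
by proving the two inclusions in turn; the binary form of the statement is then just the instance $I=\{1,2\}$. The inclusion from left to right holds for \emph{any} function and needs no hypothesis: it is nothing more than monotonicity of the forward image combined with the fact that $\bca_i A_i\se A_j$ for each $j$. Concretely, if $y=f(x)$ for some $x\in\bca_i A_i$, then $x\in A_i$ for every $i$, so $y\in f[A_i]$ for every $i$, and hence $y\in\bca_i f[A_i]$.

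The reverse inclusion is the only place where injectivity is used, and it is exactly what makes the statement fail for general maps. Here I would fix $y\in\bca_i f[A_i]$ and argue as follows. For each index $i$ the membership $y\in f[A_i]$ yields some preimage of $y$ lying in $A_i$; but since $f$ is injective there is \emph{at most one} point of $X$ mapping to $y$, so all of these preimages must be one and the same point $x$, and this single $x$ lies in $A_i$ for every $i$. Consequently $x\in\bca_i A_i$ and $y=f(x)\in f[\bca_i A_i]$, as required. I would stress that because injectivity forces the preimage of $y$ to be unique, no choice principle is invoked in selecting $x$ even when $I$ is infinite -- a point worth flagging given the choice-free emphasis of this paper.

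There is no genuine obstacle here: the whole content is the elementary observation that an injection has single-valued preimages, and the argument is a two-line verification. The only thing to take care of is to record that, while both the union identity and the left-to-right inclusion for intersections are automatic for every function, it is precisely the right-to-left inclusion for intersections that distinguishes injections from arbitrary maps, which is why the hypothesis appears in the statement.
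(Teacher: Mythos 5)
Your proof is correct and takes essentially the same route as the paper's own argument: one inclusion is just monotonicity of the forward image, and the other uses injectivity to identify the preimages of a common value, exactly as in the paper's (binary) proof of $f[A\cap B]=f[A]\cap f[B]$. The only caveat for your family-indexed generalization is that it requires the index set to be nonempty (the empty intersection inside $\mathcal{P}(X)$ is $X$, and $f[X]=Y$ only when $f$ is surjective), but this does not affect the binary case that the lemma states and that the paper actually uses.
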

\begin{proof}
Suppose that $f:X\ra Y$ is an injective function. The inclusion $f[A\cap B]\se f[A]\cap f[B]$ follows from the fact that since $f[-]:\ca{P}(X)\ra \ca{P}(Y)$ is monotone we have both $f[A\cap B]\se f[A]$ and $[A\cap B]\se f[B]$. For the other inclusion, suppose that $y\in f[A]\cap f[B]$. Then, there are $a\in A$ and $b\in B$ with $f(a)=f(b)=y$. By injectivity of $f$ we must then have that $a=b\in A\cap B$, and so $y\in f[A\cap B]$. 
\end{proof}

\begin{lemma}\label{compoextre}
The composition of extremal monomorphisms in $\bd{Top}$ is an extremal monomorphism.
\end{lemma}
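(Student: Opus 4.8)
The plan is to reduce everything to the concrete characterization of extremal monomorphisms provided by Lemma \ref{imagehomeo8}, which says that a continuous map is an extremal monomorphism precisely when it is injective and corestricts to a homeomorphism onto its image. This is far cleaner than unwinding the categorical definition directly with epi-factorizations, so I would avoid that route. Suppose then that $f:X\ra Y$ and $g:Y\ra Z$ are extremal monomorphisms; the goal is to show that $g\circ f:X\ra Z$ is one as well. By Lemma \ref{imagehomeo8}, $f$ is injective with corestriction $X\ra f[X]$ a homeomorphism, and $g$ is injective with corestriction $Y\ra g[Y]$ a homeomorphism. Injectivity of $g\circ f$ is then immediate, being a composition of injections, so by Lemma \ref{imagehomeo8} it remains only to check that the corestriction $X\ra (g\circ f)[X]=g[f[X]]$ is a homeomorphism.

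The heart of the argument is to establish that last homeomorphism, and here I would factor the corestriction as $X\ra f[X]\ra g[f[X]]$. The first map is the corestriction of $f$, a homeomorphism by hypothesis. The second is the restriction of $g$ to the subspace $f[X]\se Y$; since $g$ corestricts to a homeomorphism $Y\ra g[Y]$, and the restriction of a homeomorphism to any subspace is again a homeomorphism onto its image, the map $f[X]\ra g[f[X]]$ is a homeomorphism, where $f[X]$ carries the subspace topology inherited from $Y$ and $g[f[X]]$ the subspace topology inherited from $g[Y]$. A composite of two homeomorphisms is a homeomorphism, so $X\ra g[f[X]]$ is one, and Lemma \ref{imagehomeo8} then yields that $g\circ f$ is an extremal monomorphism.

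The one point requiring genuine care is the bookkeeping of subspace topologies: I must verify that the topology $g[f[X]]$ inherits as a subspace of $Z$ agrees with the one it inherits as a subspace of $g[Y]$. This is exactly the transitivity of the subspace construction — a subspace of a subspace is a subspace — but it is the step where the argument could silently fail if one is careless about which ambient space each image set is regarded as sitting inside. Once this is pinned down, every map in the factorization is manifestly a homeomorphism between the intended subspaces, and the conclusion follows formally.
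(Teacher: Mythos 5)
Your proof is correct, and it shares the paper's opening move---reducing via Lemma \ref{imagehomeo8} to showing that $g\circ f$ is injective and that the corestriction $X\ra g[f[X]]$ is a homeomorphism---but the body of your argument is genuinely different. The paper verifies openness of images by hand: it takes an open $U\se X$, writes $f[U]=V\cap f[X]$ for some open $V\se Y$, pushes forward along $g$ using its Lemma \ref{forward8} (forward images under injections preserve intersections) to get $g[f[U]]=g[V]\cap g[f[X]]$, then writes $g[V]=W\cap g[Y]$ for an open $W\se Z$ and absorbs $g[Y]$ to conclude $g[f[U]]=W\cap g[f[X]]$. You instead factor the corestriction as $X\ra f[X]\ra g[f[X]]$ and invoke two standard facts: a homeomorphism restricted to a subspace is a homeomorphism onto its image, and the subspace topology is transitive (which you rightly flag as the one delicate bookkeeping point). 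The two arguments have the same mathematical content---the paper's open-set chase is essentially an inline proof of exactly those two standard facts---but yours is more modular and avoids Lemma \ref{forward8} altogether, since the inverse of the restricted map is simply the restriction of the inverse homeomorphism, so no intersection-preservation is needed; the paper's version, by contrast, is more self-contained, deriving everything from its own lemmas rather than appealing to textbook properties of subspaces.
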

\begin{proof}
Suppose that $m_1:X\ra Y$ and $m_2:Y\ra Z$ are extremal monomorphisms in $\bd{Top}$. By Lemma \ref{imagehomeo8}, it suffices to show that $m_2\circ m_1$ is a continuous injection such that $m_2[m_1[X]]$ is homeomorphic to $X$. Since continuous functions as well as injections are closed under composition, it suffices to show that $m_2\circ m_1$ is such that forward images of opens of $X$ are opens of $m_2[m_1[X]]$. Suppose that $U\se X$ is open. We have that $m_1[U]$ is open in $Y\cap m_1[X]$, let $V\se Y$ be such that $m_1[X]=V\cap m_1[X]$. The map $m_2$ is an injection and so by Lemma \ref{forward8} we have $m_2[m_1[U]]=m_2[V\cap m_1[X]]=m_2[V]\cap m_2[m_1[X]]$. By assumption of $m_2$, we have that $m_2[V]$ must be $W\cap m_2[Y]$ for some open $W\se Z$. This means that $m_1[X]=W\cap m_2[Y]\cap m_2[m_1[Y]]$. Since $m_2$ is an injection this is also the same as $W\cap m_2[Y\cap m_1[X]]=W\cap m_2[m_1[X]]$. Then, indeed $m_2[m_1[U]]$ is an open set of the subspace $m_2[m_1[X]]\se Z$. So, the map $m_2\circ m_1$ is an extremal monomorphism.  
\end{proof}

\begin{lemma}\label{sobrificationmono2}
Suppose that $X$ is a sober space, and that $\psi_X^{-1}:\pt(\Om(X))\ra X$ is the inverse of the sobrification map. For any subspace $i_Y:Y\se X$ the map $\psi_X^{-1}\circ \pt(\Om(i_Y)):\pt(\Om(Y))\rightarrowtail X$ is an extremal monomorphism.  
\end{lemma}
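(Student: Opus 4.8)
The plan is to reduce the claim to showing that the point map $\pt(\Om(i_Y)):\pt(\Om(Y))\ra \pt(\Om(X))$ is itself an extremal monomorphism, and then to compose. Since $X$ is sober, $\psi_X$ is a homeomorphism, so $\psi_X^{-1}$ is an isomorphism in $\bd{Top}$, and isomorphisms are extremal monomorphisms. By Lemma \ref{compoextre} the composite $\psi_X^{-1}\circ \pt(\Om(i_Y))$ will then be an extremal monomorphism as soon as $\pt(\Om(i_Y))$ is one. So everything comes down to analyzing $\pt(\Om(i_Y))$, and I would do this by checking the two conditions of Lemma \ref{imagehomeo8}: injectivity and the embedding (forward-images-of-opens-are-open) property.

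For injectivity, the key observation is that the frame map $\Om(i_Y):\Om(X)\ra \Om(Y)$ is surjective, because the opens of the subspace $Y$ are by definition exactly the sets $V\cap Y$ with $V$ open in $X$, i.e. the values $\Om(i_Y)(V)$. The map $\pt(\Om(i_Y))$ sends a frame map $p:\Om(Y)\ra 2$ to $p\circ \Om(i_Y)$; since $\Om(i_Y)$ is surjective, $p$ is determined by $p\circ \Om(i_Y)$, which gives injectivity.

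For the embedding property, I would recall from the preliminaries that the opens of $\pt(L)$ are exactly the sets $\{f:f(a)=1\}$ as $a$ ranges over $L$, so every open of $\pt(\Om(Y))$ is a basic open $\{p:p(U')=1\}$ with $U'\in \Om(Y)$, and it suffices to compute the forward image of such a set. Writing $U'=V\cap Y=\Om(i_Y)(V)$ for a suitable open $V\se X$, its image under $\pt(\Om(i_Y))$ is $\{p\circ \Om(i_Y):p(\Om(i_Y)(V))=1\}$; and for a point $q=p\circ \Om(i_Y)$ in the image we have $q(V)=p(\Om(i_Y)(V))$, so the condition $p(U')=1$ is exactly $q(V)=1$. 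Hence the forward image is precisely the intersection of the basic open $\{q:q(V)=1\}$ of $\pt(\Om(X))$ with the image of $\pt(\Om(i_Y))$, which is open in that image. By Lemma \ref{imagehomeo8} this shows $\pt(\Om(i_Y))$ is an extremal monomorphism, and composing with the isomorphism $\psi_X^{-1}$ finishes the argument.

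The step I expect to require the most care is the embedding verification: one has to be precise that every open of $\pt(\Om(Y))$ is genuinely of the form $\{p:p(U')=1\}$ (so that checking basic opens suffices), and that evaluation of the chosen preimage $V$ behaves correctly, so that the forward image lands \emph{exactly} on a basic open intersected with the image rather than merely inside it. The injectivity and the final composition step are routine once surjectivity of $\Om(i_Y)$ is in hand.
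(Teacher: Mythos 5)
Your proposal is correct, and it actually does more than the paper's own proof. The paper's argument for this lemma is very terse: it invokes Lemma \ref{compoextre}, and then only verifies the isomorphism half, namely that $\psi_X^{-1}$ (an isomorphism, since $X$ is sober) is an extremal monomorphism via Lemma \ref{imagehomeo8}. It never explicitly establishes the other hypothesis that Lemma \ref{compoextre} needs, namely that $\pt(\Om(i_Y))$ is itself an extremal monomorphism; this is left implicit, essentially as the standard fact that the spectrum of a frame surjection is a subspace embedding. Your proposal correctly identifies this as the real content of the statement and supplies the proof: injectivity of $\pt(\Om(i_Y))$ follows from surjectivity of $\Om(i_Y)$ (precomposition with a surjection is injective on frame maps into $2$), and the embedding property follows from the computation that the forward image of a typical open $\{p:p(\Om(i_Y)(V))=1\}$ of $\pt(\Om(Y))$ equals $\{q:q(V)=1\}$ intersected with the image, where the exact equality (not merely an inclusion) uses that each point of the image has a unique preimage. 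Both computations are right, and your remark that the opens of a spectrum are exactly the sets $\{f:f(a)=1\}$ (not just a basis) is what makes checking these sets sufficient. The only step worth stating explicitly is continuity of $\pt(\Om(i_Y))$, which is immediate since it is $\pt$ applied to a frame map. In short: you follow the same skeleton as the paper (reduction via Lemma \ref{compoextre} and Lemma \ref{imagehomeo8}), but your version is self-contained where the paper's proof is elliptical.
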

\begin{proof}
By Lemma \ref{compoextre}, it suffices to show that isomorphisms are extremal monomorphisms. This follows from Lemma \ref{imagehomeo8}, since for a continuous open bijection $f:X\cong Y$ indeed we have that the image $f[X]$ is homeomorphic to $X$.
\end{proof}

In the lemma below we have kept the convention that for a continuous function $f:Y\rightarrow X$, we have denoted as $f'$ the map $Y\ra f[Y]$ acting as $y\mapsto f(y)$.

\begin{lemma}\label{thereissoby}
Suppose that $X$ is a sober space, and that $Y\se X$ is a subspace. We have a chain of subspace inclusions $Y\se \sob(Y)\se X$ such that the following diagram in $\bd{Top}$ commutes. All vertical arrows are homeomorphisms.
\[
\begin{tikzcd}
Y
\arrow[rr,hookrightarrow]
&& \sob(Y)
\arrow[rr,hookrightarrow]
&& X
\\
Y
\arrow[u,"\id_Y"]
\arrow[rr,rightarrowtail,"\psi_Y"]
&& \pt(\Om(Y))
\arrow[u,"(\psi_X^{-1}\circ \pt(\Om(i)))'"]
\arrow[rr,rightarrowtail,"\pt(\Om(i))"]
&& \pt(\Om(X))
\arrow[u,"\psi_X^{-1}"]
\end{tikzcd}
\]
\end{lemma}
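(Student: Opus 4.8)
The plan is to \emph{define} $\sob(Y)$ to be the image inside $X$ of the map $\psi_X^{-1}\circ \pt(\Om(i)):\pt(\Om(Y))\ra X$, and then to read off every assertion of the lemma from the image factorization of this map together with the naturality of the sobrification unit. I record first that, since $X$ is sober, $\psi_X$ is an isomorphism in $\bd{Top}$, so $\psi_X^{-1}$ is a homeomorphism; this handles the rightmost vertical arrow. By Lemma \ref{sobrificationmono2} the composite $\psi_X^{-1}\circ \pt(\Om(i))$ is an extremal monomorphism, and by Lemma \ref{imagehomeo8} its corestriction $(\psi_X^{-1}\circ \pt(\Om(i)))'$ onto its image is a homeomorphism onto that image, the image carrying the subspace topology from $X$. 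Taking $\sob(Y)$ to be this image then yields at once the subspace inclusion $\sob(Y)\inclu X$, the fact that the middle vertical arrow is a homeomorphism, and the commutativity of the right-hand square, which is merely the statement that corestriction followed by subspace inclusion recovers the original map $\psi_X^{-1}\circ \pt(\Om(i))$.

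The one genuinely external ingredient I would invoke is the \emph{naturality} of the sobrification maps $\psi$: applied to the inclusion $i:Y\ra X$ it gives $\pt(\Om(i))\circ \psi_Y=\psi_X\circ i$, whence $i=\psi_X^{-1}\circ \pt(\Om(i))\circ \psi_Y$ using sobriety of $X$. This single identity supplies both of the remaining claims. On the one hand it exhibits $i$ as factoring through $\psi_X^{-1}\circ \pt(\Om(i))$, so that $Y=i[Y]$ is contained in the image $\sob(Y)$; since $\sob(Y)$ carries the subspace topology from $X$, transitivity of subspace topologies promotes this to a subspace inclusion $Y\inclu \sob(Y)$, completing the chain $Y\se \sob(Y)\se X$. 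On the other hand, composing with the injective inclusion $\sob(Y)\inclu X$ and cancelling it, the same identity says exactly that $(\psi_X^{-1}\circ \pt(\Om(i)))'\circ \psi_Y$ equals the inclusion $Y\inclu \sob(Y)$, which is precisely the commutativity of the left-hand square; its left vertical arrow is $\id_Y$, trivially a homeomorphism.

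The main point to handle carefully is this naturality step, since it is what actually locates the subspace $Y$ inside $\sob(Y)$ and glues the two squares together; everything else is a direct application of Lemmas \ref{imagehomeo8} and \ref{sobrificationmono2} and of the image factorization of extremal monomorphisms recalled before the statement. Naturality is available here because the sobrification maps form the unit of the reflection of $\bd{Top}$ onto the subcategory of sober spaces recalled in the preliminaries, but it is worth stating explicitly as the ingredient doing the essential work.
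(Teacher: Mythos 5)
Your proof is correct and follows essentially the same route as the paper: both define $\sob(Y)$ as the image of the extremal monomorphism $\psi_X^{-1}\circ \pt(\Om(i))$, and both obtain the right-hand square and the middle homeomorphism from Lemmas \ref{sobrificationmono2} and \ref{imagehomeo8}. The only divergence is the left-hand square: you cite naturality of the sobrification unit, $\pt(\Om(i))\circ \psi_Y=\psi_X\circ i$, as a black box, whereas the paper proves exactly this identity by hand, unwinding $\Om(i)$ to $U\mapsto U\cap Y$ and checking that $\pt(\Om(i))$ carries the neighbourhood map $N_y:\Om(Y)\ra 2$ to the neighbourhood map $N_y:\Om(X)\ra 2$, so that $\psi_X^{-1}(\pt(\Om(i))(\psi_Y(y)))=y$. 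Your citation is legitimate, since the reflection of $\bd{Top}$ onto sober spaces is recalled in the preliminaries and units of adjunctions are natural; the paper's computation just makes the same fact self-contained, which fits its stated aim of working with concrete descriptions throughout.
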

\begin{proof}

The map $\psi_X^{-1}\circ \pt(\Om(i))$ is an extremal monomorphism, by Lemma \ref{sobrificationmono2}. By the observation made after Lemma \ref{imagehomeo8}, then, there is a subspace $\sob(Y)$ of $X$ such that the subspace inclusion $\sob(Y)\inclu X$ makes the right square of the rectangle commute. Let us show that the left square commutes. We need to show that for every $y\in Y$ we have that $\psi_X^{-1}(\pt(\Om(i))(\psi_Y(y)))=y$ for every $y\in Y$. We notice that for the subspace inclusion $i:Y\inclu X$ we have that $\Om(i):\Om(X)\ra \Om(Y)$ is the map $i^{-1}:U\mapsto U\cap Y$. Then, the map $\pt(\Om(i)):\pt(\Om(Y))\ra \pt(\Om(X))$ is precomposition with $-\cap Y$, that is, it assigns to each frame map $f:\Om(X)\ra 2$ the map $\Om(Y)\ra 2$ defined as $U\mapsto f(U\cap Y)$. In particular, for a map of the form $N_y:\Om(Y)\ra 2$, we will have that $\pt(\Om(i))(N_y)$ is a map $\Om(X)\ra 2$ such that it gives $1$ whenever $y\in U\cap Y$, and it gives $0$ whenever $y\notin U\cap Y$. It then coincides with the neighborhood map $N_y:\Om(X)\ra 2$. Then, we have the following.
\begin{align*}
    & \psi_X^{-1}(\pt(\Om(i))(\psi_Y(y)))=\\
    & =\psi_X^{-1}(\pt(\Om(i))(N_y))=\\
    & =\psi_X^{-1}(N_y)=\\
    & =y.\qedhere
\end{align*}
\end{proof}

 In light of this lemma, when we have a sober space $X$ and a subspace $Y$, we will consider the sobrification of $Y$ to be a subspace $\sob(Y)$ of $X$ with $Y\se \sob(Y)$ the sobrification map. For any sober space $X$ and any subspace $Y\se X$, we may calculate its sobrification $\sob(Y)$ as a closure operator on $\ca{P}(X)$. Let us remind the reader that we have already shown that the sober subspaces of a sober space form a closure system: for a sober space $X$, the sober subspaces coincide with those whose underlying set is a Skula-closed set.
 
\begin{lemma}\label{intersectionofsobers}
If $X$ is a sober space and $Y\se X$ a subspace, we have
\[
\sob(Y)=\bca \{Z\se X:Y\se Z,Z\mb{ is sober}\}
\]
of $X$ with sobrification map the set inclusion $Y\se \sob(Y)$.
\end{lemma}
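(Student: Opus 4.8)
The plan is to identify both sides with the least sober subspace of $X$ that contains $Y$. Write $\ca{S}$ for the intersection $\bca\{Z\se X : Y\se Z,\ Z\text{ sober}\}$. We have just recalled that the sober subspaces of a sober space are exactly those whose underlying set is Skula-closed; since the Skula-closed sets are the closed sets of the Skula topology, they are closed under arbitrary intersections, so $\ca{S}$ is itself a sober subspace of $X$. Being an intersection of all such subspaces, it is then the least sober subspace of $X$ containing $Y$.

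The inclusion $\ca{S}\se \sob(Y)$ is immediate: by Lemma \ref{thereissoby} we have $Y\se \sob(Y)$ and $\sob(Y)\cong \pt(\Om(Y))$, and every space of the form $\pt(L)$ is sober, so $\sob(Y)$ is one of the sober subspaces of $X$ containing $Y$ over which the intersection defining $\ca{S}$ is taken.

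For the reverse inclusion I would prove the minimality of $\sob(Y)$, namely that $\sob(Y)\se Z$ for every sober subspace $Z$ with $Y\se Z\se X$. Fix such a $Z$ and factor the inclusion of $Y$ into $X$ as $Y\stackrel{i}{\inclu} Z\stackrel{j}{\inclu} X$. By functoriality of $\pt\circ \Om$ we have $\pt(\Om(j\circ i))=\pt(\Om(j))\circ \pt(\Om(i))$; by naturality of the sobrification map applied to $j$, namely $\psi_X\circ j=\pt(\Om(j))\circ \psi_Z$, together with the fact that $\psi_Z$ is an isomorphism because $Z$ is sober, we get $\psi_X^{-1}\circ \pt(\Om(j))=j\circ \psi_Z^{-1}$. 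Combining these identities, the map $\psi_X^{-1}\circ \pt(\Om(j\circ i))$ whose image is $\sob(Y)$ (Lemma \ref{thereissoby} applied in $X$) factors as $j\circ(\psi_Z^{-1}\circ \pt(\Om(i)))$. Since $\psi_Z^{-1}\circ \pt(\Om(i))$ is exactly the map whose image is the sobrification of $Y$ computed inside the sober space $Z$ (Lemma \ref{thereissoby} applied in $Z$), and $j$ is a set inclusion, this image lies in $Z$. Hence $\sob(Y)\se Z$ for every such $Z$, so $\sob(Y)\se \ca{S}$. Together with the previous paragraph this gives $\sob(Y)=\ca{S}$, and the assertion that the sobrification map is the inclusion $Y\se \sob(Y)$ is then part of Lemma \ref{thereissoby}.

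The main obstacle is this last step: the equality really rests on the fact that the concrete subspace $\sob(Y)$ produced by Lemma \ref{thereissoby} does not depend on which ambient sober space we embed $Y$ into, and the naturality argument above is what makes this precise. The remaining ingredients — invertibility of $\psi_Z$ on sober spaces, functoriality of $\pt\circ \Om$, and the closure-system description of sober subspaces — are routine.
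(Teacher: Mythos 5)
Your proof is correct, and the crucial step is argued by a genuinely different mechanism than in the paper, even though the outer structure is forced to be the same (one inclusion because $\sob(Y)$ is itself a sober subspace of $X$ containing $Y$; the other by showing $\sob(Y)\se Z$ for every sober $Z$ with $Y\se Z\se X$). For that minimality step the paper invokes the universal property of sobrification: the inclusion $Y\se Z$ factors through a unique continuous map $i_{SZ}:\sob(Y)\ra Z$, which is then said to be a subset inclusion ``because the triangle commutes.'' You instead run a concrete diagram chase: factoring $Y\stackrel{i}{\inclu}Z\stackrel{j}{\inclu}X$, and combining functoriality of $\pt\circ\Om$ with naturality of the unit $\psi$ and invertibility of $\psi_Z$ (since $Z$ is sober), you rewrite $\psi_X^{-1}\circ\pt(\Om(j\circ i))$, whose image is $\sob(Y)$ by Lemma \ref{thereissoby} applied in $X$, as $j\circ\bigl(\psi_Z^{-1}\circ\pt(\Om(i))\bigr)$, whose image is the sobrification of $Y$ computed inside $Z$ and hence lies in $Z$. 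What your computation buys is precisely the point the paper treats tersely: commutativity of the triangle only controls $i_{SZ}$ on the points of $Y$, not on the possibly larger set $\sob(Y)$, so one needs either the uniqueness clause of the reflection (maps out of $\sob(Y)$ into a sober space are determined by their restriction to $Y$) or, as you do, the naturality square identifying the sobrification computed in $X$ with the one computed in $Z$. The paper's route is shorter and more conceptual; yours is more self-contained at the level of underlying sets. Finally, your opening observation that the intersection is itself sober (via Skula-closedness) is true but not actually needed: the two inclusions you establish already give the equality.
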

\begin{proof}
By Lemma \ref{thereissoby}, we know that there is a subspace $\sob(Y)\se X$ such that it is isomorphic to the sobrification of $Y$, with the sobrification map given by the set inclusion $i_{YS}:Y\inclu \sob(Y)$. By the universal property of the sobrification of a space, whenever $Z$ is a sober subspace of $X$ such that we have an inclusion $i_{YZ}:Y\se Z$, there must be a unique map $i_{SZ}:\sob(Y)\ra Z$ such that the following commutes.
\[
\begin{tikzcd}
\sob(Y)
\arrow[dr,"i_{SZ}"] \\
Y
\arrow[u,hookrightarrow,"i_{YS}"] 
\arrow[r,hookrightarrow,"i_{YZ}"] 
& Z
\end{tikzcd}
\]
Because this triangle commutes, $i_{SZ}$, too, must be a subset inclusion. Then, every sober subspace of $X$ which contains $Y$ must be a superset of $\sob(Y)$, and so $\sob(Y)\se \bca \{Z\se X:Z\mb{ is sober, }Y\se Z\}$. For the other set inclusion, it suffices to notice that $\sob(Y)\in \{Z\se X:Z\mb{ is sober, }Y\se Z\}$.
\end{proof}

We now look at the specific case where we want to sobrify subspaces of a sober space of the form $\pt(L)$ for some frame $L$.

\begin{lemma}\label{sobclo}
For any frame $L$ and any subspace $Y\se \pt(L)$ we have that its sobrification is $\pt(\mathfrak{M}(Y))$, with sobrification map the set inclusion $Y\se \pt(\mathfrak{M}(Y))$
\end{lemma}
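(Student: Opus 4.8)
The plan is to realize $\pt(\mathfrak{M}(Y))$ as the smallest sober subspace of $\pt(L)$ containing $Y$ and then invoke the intersection description of sobrification in Lemma \ref{intersectionofsobers}. Everything rests on one observation about $\pt(L)$: since its opens are exactly the sets $\{p:a\nleq p\}$ (and $\phi$ preserves joins, so these are \emph{all} the opens), its closed sets are exactly the $\up a\cap \pt(L)$, and hence the closure of any $A\se \pt(L)$ is $\up(\bwe A)\cap \pt(L)$. Consequently, if $A$ is nonempty and irreducible, then by sobriety of $\pt(L)$ its closure has a unique generic point, and a one-line order computation identifies this point with $\bwe A$; in particular $\bwe A$ is then prime. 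This dictionary between the meet taken in $L$ and generic points of closures in $\pt(L)$ is what translates between the meet-closure operator $\mathfrak{M}$ and sobriety.

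First I would record the easy inclusion $Y\se \pt(\mathfrak{M}(Y))$: every $p\in Y$ is prime and equals $\bwe\{p\}$, so it lies in $\pt(L)\cap \mathfrak{M}(Y)$, which is $\pt(\mathfrak{M}(Y))$ by the identity $\pt(S)=\pt(L)\cap S$ (here $\mathfrak{M}(Y)$ is a sublocale by item (6) of Lemma \ref{manylocalethings}). Next I would check that $W:=\pt(\mathfrak{M}(Y))$ is itself sober. Given an irreducible closed $F$ of the subspace $W$, it is a nonempty irreducible subset of $\pt(L)$, so by the observation above $p_0:=\bwe F$ is prime and is the generic point of $\overline{F}$ in $\pt(L)$. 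Since $F\se \mathfrak{M}(Y)$ and $\mathfrak{M}(Y)$ is closed under meets, $p_0\in \mathfrak{M}(Y)$, whence $p_0\in W$; and $\overline{\{p_0\}}\cap W=\overline{F}\cap W=F$, so $p_0$ is a generic point of $F$ inside $W$ (unique because $\pt(L)$, and hence $W$, is $T_0$). Thus $W$ is a sober subspace containing $Y$, and Lemma \ref{intersectionofsobers} gives $\sob(Y)\se W$.

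For the reverse inclusion I would show that every sober subspace $Z$ with $Y\se Z$ already contains $W$. Take a prime $p=\bwe M\in W$ with $M\se Y\se Z$. Then $M$ is nonempty (as $p\neq 1$) and irreducible, because its closure in $\pt(L)$ is $\up p\cap \pt(L)=\overline{\{p\}}$, the closure of a point. Hence $\overline{M}\cap Z$ is irreducible and closed in the sober space $Z$, so it has a generic point $z_0\in Z$. The containment $z_0\in \overline{M}=\up p\cap \pt(L)$ gives $p\le z_0$, while $M\se \overline{\{z_0\}}\se \up z_0\cap \pt(L)$ gives $z_0\le \bwe M=p$; therefore $z_0=p$, so $p\in Z$. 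This yields $W\se \bca\{Z:Y\se Z,\ Z\text{ sober}\}=\sob(Y)$, and together with the previous paragraph $\sob(Y)=\pt(\mathfrak{M}(Y))$, the sobrification map being the set inclusion as in Lemma \ref{intersectionofsobers}.

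The main obstacle is really the single computation underlying both the sobriety of $\pt(\mathfrak{M}(Y))$ and the minimality among sober supersets: turning the algebraic fact ``$\bwe M$ is prime and lies in $\mathfrak{M}(Y)$'' into the topological fact ``$\bwe M$ is the generic point that sobriety forces to exist.'' Once the closure formula $\overline{A}=\up(\bwe A)\cap \pt(L)$ is in hand, both directions collapse to the same order argument identifying a forced generic point with $\bwe M$, so the remaining work is just bookkeeping with subspace closures.
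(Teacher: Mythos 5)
Your proof is correct, but it takes a genuinely different route from the paper's. The paper also reduces the statement, via Lemma \ref{intersectionofsobers}, to showing that $\pt(\mathfrak{M}(Y))$ is the least sober subspace of $\pt(L)$ containing $Y$, but then it argues entirely at the level of the $\mathfrak{M}\dashv\pt$ machinery: one inclusion is dismissed by citing that $\pt(\mathfrak{M}(Y))$ is sober (spectra of sublocales are sober), and for the other it represents an arbitrary sober subspace $Z\supseteq Y$ as $Z=\pt(S)$ for some sublocale $S\se L$, then concludes $\pt(\mathfrak{M}(Y))\se \pt(\mathfrak{M}(\pt(S)))=\pt(S)=Z$ by monotonicity of $\mathfrak{M}$ and $\pt$ together with the fact that a frame and its spatialization have the same points. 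You never invoke the representation of sober subspaces as spectra of sublocales; instead you work point-set-topologically, using the closure formula $\overline{A}=\up\bigl(\bwe A\bigr)\cap \pt(L)$ to identify forced generic points with meets, proving sobriety of $\pt(\mathfrak{M}(Y))$ (in its subspace topology) by hand, and chasing the generic point of $\mathrm{cl}_Z(M)$ inside an arbitrary sober superset $Z$ to conclude $\bwe M\in Z$. What your approach buys is self-containedness: the paper's proof silently relies on two nontrivial background facts --- that every sober subspace of $\pt(L)$ is of the form $\pt(S)$, and that the subspace topology on $\pt(S)\se\pt(L)$ agrees with the spectral topology of the frame $S$ --- whereas your argument needs only the definition of sobriety, the identity $\pt(S)=\pt(L)\cap S$, and meet-closure of $\mathfrak{M}(Y)$. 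The cost is that it is longer and runs somewhat against the paper's stated preference for avoiding computations inside point-set spaces; the paper's argument is shorter and stays in the lattice-theoretic idiom, at the price of leaning on unproved standard facts.
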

\begin{proof}
For this, it suffices to show that for every subspace $Y\se \pt(L)$ the intersection $\sob(Y)$ of all sober subspaces of $\pt(L)$ which contain it is $\pt(\mathfrak{M}(Y))$. One set inclusion is clear, since $\pt(\mathfrak{M}(Y))$ is sober and $Y\se \pt(\mathfrak{M}(Y))$. For the other set inclusion, suppose that $Y\se Z$ with $Z\se \pt(L)$ sober. We may then write $Z=\pt(S)$ for some sublocale $S\se L$. We then have that $\pt(\mathfrak{M}(Y))\se \pt(\mathfrak{M}(\pt(S)))$ by monotonicity of both $\pt$ and $\mathfrak{M}$, and since the points of a frame are the same as the points of its spatialization we have $\pt(S)=\pt(\mathfrak{M}(\pt(S)))$ and so $\pt(\mathfrak{M}(Y))\se \pt(S)$.
\end{proof}
Finally, we may come back to the adjunction introduced in Proposition \ref{sobspaadj8}. The next proposition will make precise our opening statement for this section; that there is a symmetry between spatialization of sublocales and sobrification of subspaces.

\begin{proposition}\label{sobspaadjuseful}
For every frame $L$, consider the following adjunction.
\[
\begin{tikzcd}[row sep=large, column sep = large]
\ca{P}(\pt(L))
\arrow[swap, r, bend left=40, "\mathfrak{M}"{name=F}]
& \SL
\arrow[l, bend left=40, "\pt"{name=F1}]
\arrow[phantom,"\dashv" rotate=-90,from=F1, to=F]
\end{tikzcd}
\]
The fixpoints of $\mathfrak{M}\circ \pt$ are exactly the spatial sublocales, and the fixpoints of $\pt\circ \mathfrak{M}$ are exactly the sober subspaces.
\end{proposition}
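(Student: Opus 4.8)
The plan is to identify the two composites appearing in the adjunction with operators we have already analyzed, and then simply read off their fixpoints. Recall that in an adjunction of posets the composite of the two maps, taken in either order, is a closure operator on one side and an interior operator on the other; here $\mathfrak{M}\circ \pt$ acts as an interior operator on $\SL$ and $\pt\circ \mathfrak{M}$ acts as a closure operator on $\ca{P}(\pt(L))$. The first composite is precisely the spatialization operator $\spa$ of Proposition \ref{spaconucleus}, since $\spa(S)=\mathfrak{M}(\pt(S))$ by Lemma \ref{spaint} and the surrounding discussion; the second composite is precisely the sobrification closure operator of Lemma \ref{sobclo}, since $\pt(\mathfrak{M}(Y))=\sob(Y)$ for every subspace $Y\se \pt(L)$. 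The proof is then a matter of recalling that the fixpoints of an interior (resp. closure) operator are its ``open'' (resp. ``closed'') elements, and matching these with spatiality and sobriety.

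For the first statement I would argue both inclusions. If $S$ is a fixpoint, then $S=\mathfrak{M}(\pt(S))$, and since every element of $\mathfrak{M}(\pt(S))$ is by definition a meet of primes of $S$, the sublocale $S$ is spatial by the lemma characterizing spatial frames as those all of whose elements are meets of primes. Conversely, if $S$ is spatial, then every element of $S$ is a meet of elements of $\pt(S)=\pt(L)\cap S$, which gives $S\se \mathfrak{M}(\pt(S))$; the reverse inclusion $\mathfrak{M}(\pt(S))\se S$ holds because $\pt(S)\se S$ and $S$ is closed under meets. Hence $S=\mathfrak{M}(\pt(S))=\spa(S)$ is a fixpoint. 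This is essentially the idempotency already noted in the proof of Proposition \ref{spaconucleus}, now read in both directions.

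For the second statement I would use Lemma \ref{sobclo} to rewrite $\pt(\mathfrak{M}(Y))$ as $\sob(Y)$, so that $Y$ is a fixpoint of $\pt\circ \mathfrak{M}$ exactly when $Y=\sob(Y)$. By Lemma \ref{intersectionofsobers} the subspace $\sob(Y)$ is the intersection of all sober subspaces of $\pt(L)$ containing $Y$; consequently $Y=\sob(Y)$ holds if and only if $Y$ itself belongs to that collection, that is, if and only if $Y$ is sober. This yields the desired characterization of the fixpoints on the $\ca{P}(\pt(L))$ side.

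I expect no genuine obstacle here, since the content has been front-loaded into Lemmas \ref{spaint}, \ref{sobclo} and \ref{intersectionofsobers}; the only point demanding care is the bookkeeping that ties the abstract composites of the adjunction to the concrete operators $\spa$ and $\sob$, and in particular the implicit use (via Lemma \ref{thereissoby}) that the sobrification of a subspace of a sober space is again realized concretely as a superset, so that ``being a fixpoint of the closure operator'' and ``being sober as a subspace'' genuinely coincide.
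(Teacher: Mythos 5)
Your proposal is correct and follows essentially the same route as the paper, whose proof is simply the observation that the result follows from Proposition \ref{sobspaadj8} together with Lemmas \ref{spaint} and \ref{sobclo}. You have merely expanded the same identification of $\mathfrak{M}\circ\pt$ with spatialization and $\pt\circ\mathfrak{M}$ with sobrification into explicit inclusions, which is a fine (if more verbose) rendering of the intended argument.
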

\begin{proof}
This follows from Proposition \ref{sobspaadj8} combined with Lemmas \ref{spaint} and \ref{sobclo}.
\end{proof}

\section{The relation between subspaces and sublocales}

 We can now finally see that we can exploit the equivalence between sober spaces and spatial frames to show that the coframes of sober subspaces of $\pt(L)$ and that of spatial sublocales of $L$ are the same.
\begin{proposition}\label{spaisosober}
For any frame $L$ we have that the map $\pt:\spa [\SL]\ra\sob[\ca{P}(\pt(L))]$ is a coframe isomorphism. Its inverse acts as $S\mapsto \mathfrak{M}'(S)$.
\end{proposition}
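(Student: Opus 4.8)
The plan is to deduce the isomorphism directly from the adjunction of Proposition~\ref{sobspaadj8}, avoiding any hands-on verification that the coframe operations are preserved. The tool I would invoke is the standard fact about monotone adjunctions of posets: if $F\dashv G$ with $F:P\ra Q$ and $G:Q\ra P$, then $G\circ F$ is a closure operator on $P$, $F\circ G$ is an interior operator on $Q$, and $F$ and $G$ restrict to a pair of mutually inverse order isomorphisms between the fixpoints of $F\circ G$ in $Q$ and the fixpoints of $G\circ F$ in $P$. I would apply this with $F=\mathfrak{M}$ and $G=\pt$, using the adjunction $\mathfrak{M}\dashv \pt$ established in Proposition~\ref{sobspaadj8}.

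First I would identify the two posets of fixpoints. By Proposition~\ref{sobspaadjuseful} the fixpoints of $\mathfrak{M}\circ \pt$ inside $\SL$ are exactly the spatial sublocales, namely the elements of $\spa[\SL]$, and the fixpoints of $\pt\circ \mathfrak{M}$ inside $\ca{P}(\pt(L))$ are exactly the sober subspaces, namely the elements of $\sob[\ca{P}(\pt(L))]$. The general fact then yields at once that $\pt$ restricts to an order isomorphism $\spa[\SL]\ra \sob[\ca{P}(\pt(L))]$ whose inverse is the corestriction $\mathfrak{M}'$ of $\mathfrak{M}$, sending each sober subspace $S$ to the spatial sublocale $\mathfrak{M}'(S)$. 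This is precisely the map and inverse asserted in the statement.

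It then remains only to promote this order isomorphism to a coframe isomorphism, which I expect to be the one place requiring a word of care rather than a serious obstacle. The key observation is that an order isomorphism between complete lattices automatically preserves all joins and all meets, since it carries least upper bounds to least upper bounds and greatest lower bounds to greatest lower bounds; in particular it preserves the finite joins and arbitrary meets that a coframe homomorphism is required to preserve. As $\spa[\SL]$ is a coframe by the corollary to Proposition~\ref{spaconucleus}, the isomorphism transports this structure across, so $\sob[\ca{P}(\pt(L))]$ is a coframe as well and $\pt$, with inverse $\mathfrak{M}'$, is a coframe isomorphism. Thus no separate check that $\pt$ and $\mathfrak{M}'$ respect the lattice operations is needed.
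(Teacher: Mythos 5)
Your proof is correct and follows essentially the same route as the paper's: both invoke the adjunction $\mathfrak{M}\dashv\pt$ of Proposition~\ref{sobspaadj8}, the standard fact that an adjunction restricts to an equivalence (for posets, a pair of mutually inverse order isomorphisms) between the fixpoints of the two composites, and Proposition~\ref{sobspaadjuseful} to identify those fixpoints as the spatial sublocales and the sober subspaces. Your closing remark that an order isomorphism of complete lattices automatically preserves all joins and meets, hence is a coframe isomorphism, is a point the paper leaves implicit, and it is a welcome addition.
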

\begin{proof}
Proposition \ref{sobspaadj8} tells us that we have an adjunction $\mathfrak{M}:\ca{P}(\pt(L))\lra \SL:\pt$ with $\mathfrak{M}\dashv \pt$. This adjunction restricts to an equivalence of categories between the two full subcategories determined by the objects which are fixpoints of $\mathfrak{M}\circ \pt$ and $\pt\circ \mathfrak{M}$. By Proposition \ref{sobspaadjuseful}, these are the spatial sublocales and the sober subspaces, respectively.
\end{proof}

We can gather together this result and the results of the two previous subsections in the following picture. The following is a commutative diagram in the category of coframes. We will refer to this as the ``main diagram" of this section.
\begin{center}
\begin{tikzcd}[row sep=large, column sep = large]
\spa [\mf{S}(L)]  
\arrow{r}{\pt(\cong)}  
& \sob[\ca{P}(\pt(L))] 
\arrow[hookrightarrow]{d}  \\
\mf{S}(L) 
\arrow{r}{\pt} 
\arrow[twoheadrightarrow]{u}{\spa} 
& \ca{P}(\pt(L))  
\end{tikzcd}
\end{center}

Our analysis of the connection between subspaces and sublocales will be centred around analyzing what happens when the vertical arrows in this diagrams are isomorphisms.

\begin{itemize}
    \item When the arrow on the left is an isomorphism, we will have that the collection of sublocales perfectly represent the sober subspaces of $\pt(L)$. This condition is equivalent to all sublocales of $L$ being spatial, and this in turn to $\SLop$ itself being spatial. This question of how to characterize the frames with this property, usually referred to as the \tc{totally spatial} ones, has been explored in several papers. Most famously, Niefield and Rosenthal in \cite{niefield87} have proven that this is equivalent to a certain condition on the primes of $L$. More recently (see \cite{Avila19} and \cite{avila2019frame}), the question has been explored using a new approach involving Esakia duality. In \cite{ISBELL91} the question is explored, among other things, and a more point-set approach is used. We will re-prove some of their results, with the modification that our versions of Niefield and Rosenthal's results will not rely on Zorn's lemma, and that our proof techniques will use the approach of sublocales as concrete subsets of frames.
    \item When the arrow on the right is an isomorphism, the spatial sublocales of $\SL$ perfectly represent the subspaces of $\pt(L)$. We will show that this is equivalent to the space $\pt(L)$ being $T_D$ and the primes of $L$ being \tc{weakly covered}. This is a weak version of the notion of coveredness which was introduced in \cite{banaschewski15}, and then used in \cite{picado19} in relation to $T_D$ spaces. In spatial frames coveredness is equivalent to weak coveredness.
    \item When both the arrow on the left and that on the right are isomorphisms, the sublocales perfectly represent the subspaces. The frames $L$ satisfying this condition are always spatial. In \cite{simmons80} it is shown that $\Om(X)$ is such that $\SO\cong \ca{P}(X)$ if and only if the space $X$ is \tc{scattered}. We will prove this result in more algebraic terms involving the primes of $L$, and show that scatteredness of $X$ amounts to a natural strengthening of the condition of \cite{niefield87} on the primes. 
\end{itemize}
 
\subsection{When are all subspaces sober?}

For a frame $L$, we say that a prime $p\in \pt(L)$ is \tc{weakly covered} if whenever $p=\bwe P$ for $P\se \pt(L)$ we have $p\in P$. This is a weakening of the notion of \tc{covered prime}, by which is meant a prime $p\in L$ such that whenever $p=\bwe A$ we have $p\in A$. The notion of covered prime is already present in relation with the $T_D$ axiom in the literature, for instance in \cite{picado19} it is observed that $X$ being a $T_D$ space is equivalent to all primes of $\Om(X)$ of the form $p_x$ for some $x\in X$ being covered. Our notion of weakly covered stems from the fact that we want a notion of covered prime generalized to nonspatial frames.
\begin{lemma}\label{allsoberimpli1}
For a frame $L$, every prime of $L$ is weakly covered if and only if all subspaces of $\pt(L)$ are sober.
\end{lemma}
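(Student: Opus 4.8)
The plan is to reduce sobriety of a subspace to a condition about which meets happen to be prime, using the explicit description of sobrification from Lemma \ref{sobclo}. By that lemma, for any subspace $Y\se\pt(L)$ the sobrification $\sob(Y)$ equals $\pt(\mathfrak{M}(Y))$, and by Proposition \ref{sobspaadjuseful} the sober subspaces are precisely the fixpoints of $\pt\circ\mathfrak{M}$. Hence $Y$ is sober if and only if $\pt(\mathfrak{M}(Y))\se Y$; here the reverse inclusion $Y\se\pt(\mathfrak{M}(Y))$ is automatic, since each $y\in Y$ is a prime equal to the meet of the singleton $\{y\}$. Unwinding $\pt(\mathfrak{M}(Y))=\pt(L)\cap\mathfrak{M}(Y)$, this says concretely that $Y$ is sober exactly when every prime of the form $\bwe M$ with $M\se Y$ already lies in $Y$. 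Establishing this equivalence is the one genuine ingredient; once it is in place, both implications fall out by direct substitution.

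For the forward direction I would assume every prime of $L$ is weakly covered and fix an arbitrary $Y\se\pt(L)$. Given a prime $p=\bwe M$ with $M\se Y$, I note that $M\se Y\se\pt(L)$, so weak coveredness of $p$, applied with $P=M$, forces $p\in M\se Y$. By the characterization above, $Y$ is sober, and since $Y$ was arbitrary every subspace of $\pt(L)$ is sober.

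For the converse I would assume all subspaces of $\pt(L)$ are sober and fix a prime $p$ together with some $P\se\pt(L)$ satisfying $p=\bwe P$. Taking the test subspace $Y=P$, which is sober by hypothesis, the equality $p=\bwe P$ witnesses $p\in\pt(\mathfrak{M}(P))=\sob(P)=P$, whence $p\in P$. Thus $p$ is weakly covered.

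The whole argument is essentially a translation, so I expect no serious obstacle. The only point requiring care is the reduction itself, namely the recognition that sobriety of $Y$ is equivalent to closure of $Y$ under exactly those meets of its own members that happen to be prime; and in the converse the small creative step is choosing the right subspace to test, which here is simply the indexing family $P$ itself.
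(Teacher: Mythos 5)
Your proof is correct and takes essentially the same approach as the paper: both arguments reduce sobriety of a subspace $Y\se\pt(L)$ to the inclusion $\pt(\mathfrak{M}(Y))\se Y$ via Lemma \ref{sobclo}, derive the forward direction by noting that weak coveredness forbids new primes arising as meets $\bwe M$ with $M\se Y$, and handle the converse by testing with the family $P$ itself (the paper phrases this contrapositively, you directly, which is an immaterial difference).
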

\begin{proof}
Suppose that all primes of $L$ are weakly covered. For every subspace $Y\se \pt(L)$ we have that $\mathfrak{M}(Y)$ contains the same primes as $Y$, since we cannot get any new prime as an meet $\bwe P$ of primes $P\se Y$, or this would be a prime which is not weakly covered. Then, $Y=\pt(\mathfrak{M}(Y))$ for every subspace $Y\se \pt(L)$. By Proposition \ref{sobclo} this means that every subspace of $\pt(L)$ is sober. Conversely, suppose that there is a prime $p\in \pt(L)$ which is not weakly covered. Let $P\se \pt(L)$ be a collection of primes with $p\notin P$ and such that $\bwe P=p$. We have that $p\in \pt(\mathfrak{M}(P))$ but $p\notin P$, and so the subspace $P\se\pt(L)$ is not its own sobrification.
\end{proof}
The condition that all subspaces of $\pt(L)$ are sober is equivalent to the right vertical arrow of the main diagram of this section being an isomorphism. This is also equivalent to having that $\pt$ establishes an isomorphism of coframes $\spa[\SL]\cong \ca{P}(\pt(L))$. Let us look at some more equivalent ways of phrasing this condition.

\begin{lemma}\label{coveredisolated}
For a frame $L$, prime $p\in \pt(L)$ is weakly covered if and only if the point $\bl(p)$ is isolated in $\pt(\SLop)$.
\end{lemma}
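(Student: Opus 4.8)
The plan is to unwind what it means for the point $\bl(p)$ to be isolated into a concrete condition on sublocales, and then match that condition directly with weak coveredness of $p$. First I would recall that, by Lemma \ref{booleanareprimes}, the primes of the frame $\SLop$ are exactly the sublocales $\bl(p)$ with $p\in \pt(L)$, and that in any frame the points coincide with the primes (a prime $P$ giving the frame map $\SLop\ra 2$ that sends $A$ to $1$ precisely when $A\nleq_{\SLop} P$). So $\pt(\SLop)$ may be identified with $\{\bl(q):q\in \pt(L)\}$, and its topology is the one whose basic opens are indexed by the elements $A\in \SLop$, namely $\Sigma_A=\{\bl(q): A\nleq_{\SLop}\bl(q)\}$.

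Next I would compute these basic opens explicitly. Since the order of $\SLop$ is the reverse of inclusion, $A\nleq_{\SLop}\bl(q)$ means $\bl(q)\nsubseteq A$; and because $\bl(q)=\{1,q\}$ by item (4) of Lemma \ref{manylocalethings}, while every sublocale contains the empty meet $1$, this is the same as $q\notin A$. Hence $\Sigma_A=\{\bl(q):q\notin A\}$. Therefore the point $\bl(p)$ is isolated in $\pt(\SLop)$ if and only if there is a sublocale $A\in \SL$ with $\{q\in \pt(L):q\notin A\}=\{p\}$, equivalently with $\pt(A)=\pt(L)\sm\{p\}$ (recalling $\pt(A)=A\cap \pt(L)$).

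It then remains to show that such a sublocale exists exactly when $p$ is weakly covered. For the implication from weak coveredness I would take the candidate witness $A=\mathfrak{M}(\pt(L)\sm\{p\})$, which is a sublocale by item (6) of Lemma \ref{manylocalethings}. Every prime $q\neq p$ lies in $A$; and $p\in A$ would mean $p=\bwe P$ for some $P\se \pt(L)\sm\{p\}$, directly contradicting that $p$ is weakly covered, so $\pt(A)=\pt(L)\sm\{p\}$ and $\bl(p)$ is isolated. Conversely, given any sublocale $A$ with $\pt(A)=\pt(L)\sm\{p\}$, suppose $p=\bwe P$ with $P\se \pt(L)$ and $p\notin P$; then $P\se \pt(L)\sm\{p\}\se A$, and meet-closure of $A$ forces $p=\bwe P\in A$, contradicting $p\notin \pt(A)$. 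Hence $p\in P$, so $p$ is weakly covered.

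I expect the only real obstacle to be bookkeeping rather than substance: keeping the order-reversal of $\SLop$ straight when translating ``$\bl(p)$ is isolated'' into the statement $\pt(A)=\pt(L)\sm\{p\}$, and making sure the identification of the points of $\SLop$ with the primes $\bl(p)$ is exactly the one induced by Lemma \ref{booleanareprimes}. Once the basic open $\Sigma_A=\{\bl(q):q\notin A\}$ is correctly identified, the equivalence with weak coveredness is immediate from the fact that sublocales are closed under arbitrary meets.
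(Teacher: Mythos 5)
Your proof is correct and follows essentially the same route as the paper: the forward direction uses the very same witness sublocale $\mathfrak{M}(\pt(L){\sm}\{p\})=\bigvee\{\bl(q):q\in\pt(L){\sm}\{p\}\}$, and the converse is the same meet-closure argument. Your preliminary reformulation of isolation of $\bl(p)$ as the existence of a sublocale $A$ with $\pt(A)=\pt(L){\sm}\{p\}$ (via $\bl(q)=\{1,q\}$ and $\pt(A)=A\cap\pt(L)$) is just a cleaner bookkeeping of what the paper does in-line, not a different argument.
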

\begin{proof}
Suppose that $L$ is a frame and that $p\in \pt(L)$ be a weakly covered prime. Let us consider the singleton $\{\bl(p)\}\se \pt(\SLop)$. A typical open of $\pt(\SLop)$ is a set of the form $\{\bl(q): \bl(q)\nsubseteq T\}$, with $q$ ranging over the primes of $L$. Let $\{q_i:i\in I\}$ be the set of primes $\pt(L){\sm}\{p\}$.  We claim that $\{\bl(p)\}=\{\bl(q):\bl(q)\nsubseteq \bve_i \bl(q_i)\}$. Since $\bve_i \bl(q_i)=\mathfrak{M}(\{q_i:i\in I\})$, by weak coveredness of $p$ we have $p\notin \bve_i \bl(q_i)$, that is, $\bl(p)\subseteq \bve_i \bl(q_i)$. For all other primes $q\in L$, by definition of the collection $q_i$ we must have $\bl(q)\se \bve_i \bl(q_i)$. Then, indeed $\{\bl(p)\}$ is isolated. For the converse, let us suppose that $\{\bl(p)\}$ is isolated. Let $S$ be the sublocale such that $\bl(p)\nsubseteq S$ and such that $\bl(q)\se S$ whenever $q$ is a prime other than $p$. Suppose that $Q$ is a collection of primes not containing $p$. We claim that $\bwe Q\neq p$. By property of the sublocale $S$, we must have $\bve \{\bl(q):q\in Q\}\se S$, and by meet closure of sublocales this implies $\bwe Q\in S$. Then, we must have $\bwe Q\neq p$, or we would have $p\in S$, contradicting our hypothesis.
\end{proof}

\begin{lemma}
For a frame $L$, a prime $p\in \pt(L)$ is weakly covered if and only if it is an isolated point of the subspace $\pt(\up p)=\up p\cap \pt(L)$ of $\pt(L)$.
\end{lemma}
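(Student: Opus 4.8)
The plan is to recast both sides of the biconditional as the single inequality $p \neq \bwe Q$, where $Q := (\up p \cap \pt(L))\sm \{p\}$ denotes the set of primes lying strictly above $p$. First I would record two standing observations about $Q$. Since every element of $Q$ dominates $p$, we always have $\bwe Q \geq p$; and when $Q = \emptyset$ the empty meet is the top $1$, which differs from $p$ because primes are never equal to $1$. Consequently $\bwe Q \geq p$ lets me freely interchange the statements $\bwe Q \neq p$, $\,\bwe Q > p$, and $\bwe Q \nleq p$, and I would use whichever is most convenient. I would also recall that the opens of $\pt(L)$ are exactly the sets $\{q \in \pt(L): a \nleq q\}$ for $a \in L$, so that an open of the subspace $\pt(\up p)$ has the form $\{q \in \pt(L) : p \leq q,\ a \nleq q\}$.

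For the topological side, I would unwind what it means for $p$ to be isolated in $\pt(\up p)$: it says $\{p\}$ is open in the subspace, i.e.\ there is $a \in L$ with $\{q \in \pt(L): p \leq q,\ a\nleq q\} = \{p\}$. Requiring $p$ itself to lie in this set gives $a \nleq p$, while requiring every $q \in Q$ to be excluded gives $a \leq q$ for all such $q$, i.e.\ $a \leq \bwe Q$. Thus $p$ is isolated iff there exists $a$ with $a \nleq p$ and $a \leq \bwe Q$. Such an $a$ exists precisely when $\bwe Q \nleq p$: for necessity, $a \leq \bwe Q \leq p$ would force $a \leq p$, a contradiction; for sufficiency, take $a = \bwe Q$. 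Using the first paragraph, this is exactly $\bwe Q \neq p$.

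For the algebraic side, I would argue that $p$ fails to be weakly covered iff $\bwe Q = p$. If $p$ is not weakly covered, pick $P \se \pt(L)$ with $\bwe P = p$ and $p \notin P$; then every element of $P$ dominates $p$ and differs from it, so $P \se Q$, whence $p = \bwe P \geq \bwe Q \geq p$ and $\bwe Q = p$. Conversely, if $\bwe Q = p$ then $Q \neq \emptyset$ (otherwise the meet would be $1 \neq p$), so $P := Q$ is a witness showing $p$ is not weakly covered. Hence $p$ is weakly covered iff $\bwe Q \neq p$, which matches the isolation condition and completes the proof. The only genuinely delicate point, which I would flag explicitly, is the bookkeeping around the empty meet and maximal primes; once $\bwe Q \geq p$ is in hand, everything else is a transparent rewriting of the two definitions against the same inequality.
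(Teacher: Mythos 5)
Your proof is correct and takes essentially the same route as the paper's: both arguments hinge on the set $Q$ of primes strictly above $p$, identifying isolation of $p$ in $\pt(\up p)$ with the condition $\bwe Q\nleq p$ via the opens $\{q\in \pt(\up p):a\nleq q\}$, and identifying weak coveredness with the same condition via the observation that any witness $P$ to non-weak-coveredness satisfies $P\se Q$, hence $\bwe Q\leq \bwe P=p$. Your repackaging of both directions as a single chain of equivalences through $\bwe Q\neq p$ is just a tidier presentation of the paper's computation.
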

\begin{proof}
Suppose that $p\in L$ is a weakly covered prime. Consider the space $\up p\cap \pt(L)\se \pt(L)$. Let $P$ be the collection $\{q\in \up p\cap \pt(L):q\neq p\}$. By assumption that $p$ is weakly covered, we have that $p<\bwe P$, and so $\bwe P\nleq p$. This means that the open $\{q\in \pt(\up p):\bwe P\nleq q\}\se \up p\cap \pt(L)$ only contains the point $p$. Conversely, suppose that $p\in \pt(L)$ is not weakly covered, and let $P$ be a collection of primes with $\bwe P=p$. Consider the subspace $\pt(\up p)\se \pt(L)$. Suppose that there is an open $\{q\in \pt(\up p): a\nleq q\}$ containing $p$. We show that it must contain at least one other point of $\pt(\up p)$. If we had $a\leq q$ for all primes $q\neq p$ with $p\leq q$, we would also have that $a\leq \bwe \{q\in \pt(\up p):q\neq p\}\leq \bwe P=p$, contradicting that $a\nleq p$. Then, the point $p$ is not isolated in $\pt(\up p)$.  
\end{proof}

Let us now recall the fact that $\pt(\SLop)$ is the Skula space of $\pt(L)$. We also remind the reader of the fact that for a frame $L$ the Skula-closed sets of its spectrum coincide with the underlying sets of its sober subspaces. We will now use these facts to explore when $\pt(L)$ is $T_D$ in terms of the topology on $\pt(\SL^{op})$.

\begin{lemma}\label{atomsofba}
In a complete Boolean algebra $B$ the meet prime elements coincide with the coatoms.
\end{lemma}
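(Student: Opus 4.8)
The plan is to prove the two inclusions separately, using distributivity for one direction and the existence of complements for the other. Throughout I keep in mind that in this paper a prime means a \emph{meet} prime, so in particular it is required to be distinct from the top element $1$; a coatom is likewise an element strictly below $1$ that is maximal with this property. Both classes consist of elements lying below $1$, so the two are directly comparable.

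First I would show that every coatom is meet prime. Let $c$ be a coatom and suppose $a\we b\leq c$ while $a\nleq c$ and $b\nleq c$. Since $a\nleq c$ the element $a\ve c$ lies strictly above $c$, so by maximality of $c$ we get $a\ve c=1$, and likewise $b\ve c=1$. Taking the meet of these and using distributivity, $1=(a\ve c)\we(b\ve c)=(a\we b)\ve c$; but $a\we b\leq c$ forces $(a\we b)\ve c=c$, whence $c=1$, contradicting that $c$ is a coatom. I note that this direction uses only distributivity and never invokes complements, so it in fact holds in any frame.

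Next I would prove the converse, that every meet prime is a coatom, and this is the step where Boolean-ness genuinely enters. Let $p$ be meet prime, so $p\neq 1$. Suppose toward a contradiction that $p$ is not a coatom, i.e.\ there is some $a$ with $p<a<1$. Consider the complement $\neg a$; since $a\neq 1$ we have $\neg a\neq 0$. Now $a\we\neg a=0\leq p$, so primality of $p$ gives $a\leq p$ or $\neg a\leq p$. The first is impossible because $p<a$, so $\neg a\leq p<a$, i.e.\ $\neg a\leq a$; but in a Boolean algebra $\neg a\leq a$ forces $\neg a=\neg a\we a=0$, contradicting $\neg a\neq 0$. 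Hence no such $a$ exists and $p$ is a coatom.

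The main obstacle, such as it is, lies only in locating where complementation is indispensable: the forward inclusion is pure distributive-lattice arithmetic, whereas the reverse inclusion fails in general frames and relies essentially on having the complement $\neg a$ available to witness that an element strictly between $p$ and $1$ would force a collapse. Completeness of $B$ plays no real role in the argument and is merely inherited from the ambient setting in which the lemma will be applied.
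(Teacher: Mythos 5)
Your proof is correct and takes essentially the same route as the paper's: the coatom-to-prime direction via the distributivity computation $(a\ve c)\we (b\ve c)=(a\we b)\ve c$, and the prime-to-coatom direction by applying primality to $a\we \neg a=0\leq p$ for a hypothetical $a$ with $p<a<1$ and deriving $\neg a=0$. Your side remarks (that the first direction needs only distributivity, and that completeness of $B$ is never used) are accurate but do not alter the substance of the argument.
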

\begin{proof}
Suppose that $B$ is a complete Boolean algebra. Suppose that $p\in \pt(B)$, and suppose towards contradiction that there is $x\in B$ with $p<x<1$. Since $x\neq 1$ and since $B$ is a Boolean algebra we must have $\neg x\neq 0$. We have that $\neg x\leq \neg p$. We cannot have $\neg x \leq \neg p$, otherwise we would have $\neg x\leq p\we \neg p$, contradicting that $\neg x\neq 0$. We deduce $\neg x\nleq \neg p$. By assumption, we also have $x\nleq p$, and by primality of $p$ this means that $x\we \neg x\nleq p$, which is a contradiction. Conversely, suppose that $p\in B$ is a coatom. Suppose that $x\we y\leq p$. Using distributivity we have $(x\ve p)\we (y\ve p)=p$. Since $p$ is a coatom, we have $x\ve p=p$ or $x\ve p=1$, and similarly for $y\ve p$. We cannot have $x\ve p=1=y\ve p$ or we would contradict our assumption that $(x\ve p)\we (y\ve p)=p$. Then, we must have either $x\ve p=p$, that is $x\leq p$, or $y\ve p=p$, that is $y\leq p$.
\end{proof}

\begin{lemma}\label{spatialba}
The prime spectrum of a Boolean algebra is the discrete space whose points are its coatoms.
\end{lemma}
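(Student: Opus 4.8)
The plan is first to pin down the underlying set of $\pt(B)$ and then to show that every singleton is open. For the underlying set, recall that $\pt(B)$ consists of the meet-prime elements of $B$ (a frame map $f\colon B\ra 2$ corresponds to the largest element $p$ with $f(p)=0$, and this $p$ is meet-prime). By Lemma \ref{atomsofba}, in a complete Boolean algebra the meet-prime elements are exactly the coatoms, which already identifies the points of the space with the coatoms of $B$. If $B$ happens to be atomless it has no coatoms, so $\pt(B)$ is empty and hence trivially discrete; the interesting content is the topology.

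Next I would recall the hull--kernel topology. A basic open of $\pt(B)$ is a set of the form $U_a=\{p\in \pt(B):a\nleq p\}$ for $a\in B$, since $f(a)=1$ translates, under the identification of a point with the largest element it sends to $0$, into the order condition $a\nleq p$. To prove discreteness it then suffices to show that each singleton $\{p\}$, with $p$ a coatom, is open, and I claim that $U_{\neg p}=\{p\}$, where $\neg p$ is the Boolean complement of $p$.

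The verification splits into two checks. First, $p\in U_{\neg p}$: were $\neg p\leq p$ we would get $\neg p=\neg p\we p=0$, contradicting $p\neq 1$, so $\neg p\nleq p$. Second, no other coatom lies in $U_{\neg p}$: given a coatom $q\neq p$, the fact that $q$ is meet-prime (again by Lemma \ref{atomsofba}) applied to $\neg p\we p=0\leq q$ yields $\neg p\leq q$ or $p\leq q$; the latter would force $p=q$ since both are coatoms, so $\neg p\leq q$, i.e.\ $q\notin U_{\neg p}$. Hence $U_{\neg p}=\{p\}$, every singleton is open, and $\pt(B)$ is discrete.

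I do not expect a serious obstacle here. The only steps requiring care are translating the definition of the opens into the order-theoretic condition $a\nleq p$ and choosing the separating element $\neg p$; once these are set up correctly, the separation is immediate from the primality of the coatoms provided by Lemma \ref{atomsofba}.
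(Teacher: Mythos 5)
Your proof is correct and follows essentially the same route as the paper: identify the points with the coatoms via Lemma \ref{atomsofba}, then isolate each coatom $p$ by showing the open set determined by $\neg p$ is exactly $\{p\}$, using $\neg p \nleq p$ and primality of the other coatoms. The only additions (the explicit frame-map/meet-prime translation and the empty, coatomless case) are harmless and do not change the argument.
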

\begin{proof}
Suppose that $B$ is a complete Boolean algebra which is also a spatial frame. Its prime spectrum $\pt(B)$ is the set of its coatoms, by Lemma \ref{atomsofba}. We claim that every coatom is an isolated point of $\pt(B)$. Suppose, then, that $p\in \pt(B)$. Since $B$ is a Boolean algebra and $p\neq 1$ we must also have $\neg p\neq 0$. This means that $\neq p\nleq \neg \neg p=p$. If $q\in \pt(L)$ is such that $q\neq p$, we have that $p\nleq q$, since $p$ and $q$ are distinct coatoms, and that $p\we \neg p\leq q$. Since $q$ is a prime, this implies that $ p\leq q$. Then, we have that the open $\{q\in \pt(B):\neg p\nleq q\}\se \pt(B)$ consists only of the singleton $\{p\}$. Then, $p$ is isolated.
\end{proof}

\begin{lemma}\label{soberclopenopen}
If a sober space has a frame of opens which is a Boolean algebra, it must be discrete.
\end{lemma}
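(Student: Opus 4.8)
The plan is to recognize this statement as an immediate consequence of Lemma~\ref{spatialba}, once sobriety is used to identify $X$ with its own spectrum. Since $X$ is sober, the sobrification map $\psi_X\colon X\ra \pt(\Om(X))$ is a homeomorphism, and so it suffices to prove that $\pt(\Om(X))$ is discrete.

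Now I would observe that the frame $\Om(X)$ meets exactly the hypotheses of Lemma~\ref{spatialba}: it is a complete lattice, since every frame is; it is a spatial frame, since every frame of opens is spatial; and by assumption it is a Boolean algebra, hence a complete Boolean algebra. Lemma~\ref{spatialba} then yields directly that $\pt(\Om(X))$ is the discrete space whose points are the coatoms of $\Om(X)$. Transporting this conclusion along the homeomorphism $\psi_X$ gives that $X$ itself is discrete.

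There is essentially no obstacle here beyond checking that the side conditions of Lemma~\ref{spatialba} — completeness and spatiality of the Boolean algebra — hold automatically in this setting, which they do for any frame of opens. The genuine content (that in a spatial complete Boolean algebra every prime is a coatom, and that each coatom $p$ is isolated because its complement $\neg p$ separates it from all other coatoms) has already been established in Lemmas~\ref{atomsofba} and~\ref{spatialba}; the present lemma is just the repackaging of that fact for sober spaces via the dual equivalence between sober spaces and spatial frames.
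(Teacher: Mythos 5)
Your proof is correct and follows exactly the paper's own route: identify the sober space $X$ with $\pt(\Om(X))$ via the sobrification homeomorphism and then invoke Lemma~\ref{spatialba}. The only difference is that you make explicit the verification that $\Om(X)$ is complete and spatial (the hypotheses actually used in the proof of Lemma~\ref{spatialba}), which the paper leaves implicit; this is a harmless and indeed welcome addition.
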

\begin{proof}
If a sober space is such that its frame of opens is Boolean, it must be homeomorphic to the prime spectrum of a Boolean algebra, which we know to be discrete, by Lemma \ref{spatialba}.
\end{proof}

\begin{lemma}\label{lastimpli1}
For a frame $L$, we have that $\spa[\SL]$ is Boolean if and only if the space $\pt(\SLop)$ is discrete.
\end{lemma}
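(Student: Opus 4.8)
The plan is to route everything through the identification of $\spa[\SL]$ with the frame of opens of the sober space $\pt(\SLop)$, and then to invoke Lemma \ref{soberclopenopen} (a sober space with a Boolean frame of opens is discrete) for one direction and the triviality that a discrete space has a powerset frame of opens for the other. Both properties in the statement are invariant under passing to the opposite lattice, since being a Boolean algebra is a self-dual lattice condition, so the orientation conventions separating ``coframe'' from ``frame'' will not actually obstruct the argument.

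First I would pin down the frame-theoretic meaning of $\spa[\SL]$. By Proposition \ref{spatializationsublocaleassembly}, $\spa[\SL]$ is the spatialization cosublocale of $\SL$, which by the ``co-'' convention is precisely the spatialization sublocale $\mathfrak{M}(\pt(\SLop))$ of the frame $\SLop$. By the defining property of the spatialization sublocale computed in Lemma \ref{spaint}, this sublocale is the one determined by the frame surjection $\phi_{\SLop}:\SLop \epi \Om(\pt(\SLop))$, and a sublocale arising from a frame surjection is isomorphic, as a frame, to the codomain of that surjection. Hence $\spa[\SL]$, read with the order it inherits from $\SLop$, is isomorphic as a frame to $\Om(\pt(\SLop))$.

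Next I would settle the equivalence at the level of this frame. For the backward direction, if $\pt(\SLop)$ is discrete then its frame of opens is the powerset $\ca{P}(\pt(\SLop))$, which is Boolean, so $\Om(\pt(\SLop))\cong \spa[\SL]$ is Boolean. For the forward direction, if $\spa[\SL]$ is Boolean then so is $\Om(\pt(\SLop))$; since $\pt(\SLop)$ is a prime spectrum it is sober, and a sober space whose frame of opens is Boolean is discrete by Lemma \ref{soberclopenopen}. Combining the two directions yields the stated biconditional.

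The one delicate point, which I expect to be the main obstacle, is the bookkeeping of order conventions: $\spa[\SL]$ carries the inclusion order of $\SL$ and is a coframe, whereas the spatialization sublocale of $\SLop$ is a frame with the reversed order, so literally $\spa[\SL]$ (inclusion order) is isomorphic to $\Om(\pt(\SLop))^{op}$ rather than to $\Om(\pt(\SLop))$ itself. I would resolve this by observing that ``is a Boolean algebra'' is preserved under taking opposites, so it is immaterial whether $\spa[\SL]$ is read as a coframe or as the frame $\Om(\pt(\SLop))$; this self-duality is exactly what legitimizes transporting Boolean-ness back and forth across the isomorphism supplied by Proposition \ref{spatializationsublocaleassembly} together with Lemma \ref{spaint}.
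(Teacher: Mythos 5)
Your proof is correct and follows essentially the same route as the paper: both identify $\spa[\SL]$ (up to the self-dual notion of Boolean-ness, across the anti-isomorphism coming from Proposition \ref{spatializationsublocaleassembly}) with $\Om(\pt(\SLop))$, then use the powerset observation for the backward direction and Lemma \ref{soberclopenopen} together with sobriety of the spectrum for the forward direction. The paper's proof is just a terser version of yours, leaving the order-convention bookkeeping implicit in the word ``anti-isomorphic.''
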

\begin{proof}
If $\pt(\SLop)$ is a discrete space, its frame of opens is a Boolean algebra. By Proposition \ref{spatializationsublocaleassembly}, this frame of opens is anti-isomorphic to the coframe $\spa [\SL]$. For the converse, suppose that $\spa[\SL]$ is a Boolean algebra. Since this is anti-isomorphic to $\Om(\pt(\SLop))$, this means that the frame of opens of $\pt(\SLop)$ is Boolean. By Lemma \ref{soberclopenopen}, the space $\pt(\SLop)$ is discrete. 
\end{proof}

\begin{theorem}\label{important1}
For a frame $L$, the following are equivalent.
\begin{enumerate}
\item All the primes in $L$ are weakly covered.
\item All subspaces of $\pt(L)$ are sober.
\item The map $\pt:\spa [\mf{S}(\Om(X))]\ra \ca{P}(\pt(L))$ is an isomorphism.
\item The space $\pt(\SLop)$ is discrete. 
\item The space $\pt(L)$ is $T_D$.
\item The coframe $\spa [\SL]$ is a Boolean algebra.
\end{enumerate}
\end{theorem}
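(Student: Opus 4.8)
The plan is to treat the theorem as a consolidation: almost all of the work has already been packaged into earlier lemmas, so I would assemble the six conditions into a single web of biconditionals, using condition (1) (all primes weakly covered) and condition (4) (discreteness of $\pt(\SLop)$) as the two hubs through which everything else passes. First I would record the two equivalences that are immediate citations: $(1)\Leftrightarrow(2)$ is exactly Lemma \ref{allsoberimpli1}, and $(4)\Leftrightarrow(6)$ is exactly Lemma \ref{lastimpli1}. Neither requires any argument beyond reference.

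Next I would connect the two hubs by proving $(1)\Leftrightarrow(4)$. The key input is Lemma \ref{booleanareprimes}, which identifies the underlying point set of $\pt(\SLop)$ with $\{\bl(p):p\in\pt(L)\}$; consequently $\pt(\SLop)$ is discrete exactly when each point $\bl(p)$ is isolated. Lemma \ref{coveredisolated} then equates isolation of $\bl(p)$ with weak coveredness of $p$. Quantifying over all primes gives the chain: every prime of $L$ is weakly covered $\Leftrightarrow$ every $\bl(p)$ is isolated in $\pt(\SLop)$ $\Leftrightarrow$ every point of $\pt(\SLop)$ is isolated $\Leftrightarrow$ $\pt(\SLop)$ is discrete.

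For the remaining two spokes I would argue as follows. For $(2)\Leftrightarrow(3)$, Proposition \ref{spaisosober} already supplies a coframe isomorphism $\pt\colon\spa[\SL]\to\sob[\ca{P}(\pt(L))]$, and the map of item (3) is precisely this isomorphism followed by the inclusion $\sob[\ca{P}(\pt(L))]\hookrightarrow\ca{P}(\pt(L))$; hence it is an isomorphism exactly when that inclusion is surjective, i.e. when every subspace of $\pt(L)$ coincides with its own sobrification, which is condition (2). For $(4)\Leftrightarrow(5)$ I would invoke the recalled fact that $\pt(\SLop)$ is the Skula space of $\pt(L)$, together with item (4) of the opening characterization of $T_D$ spaces; since $\pt(L)$ is the spectrum of a frame it is sober and therefore $T_0$, so that characterization applies and yields $\pt(L)$ is $T_D$ $\Leftrightarrow$ $Sk(\pt(L))=\pt(\SLop)$ is discrete.

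The main obstacle is not analytic but a matter of careful bookkeeping: one must (a) confirm the identification of the underlying set of $\pt(\SLop)$ with $\{\bl(p):p\in\pt(L)\}$, so that ``$\pt(\SLop)$ discrete'' genuinely means ``every $\bl(p)$ isolated'' and Lemma \ref{coveredisolated} can be applied pointwise; and (b) verify the side hypotheses, namely the sobriety (hence $T_0$-ness) of $\pt(L)$ needed to apply the $T_D$ characterization, and the fact that the map of item (3) really factors through the isomorphism of Proposition \ref{spaisosober}. Once these points are settled, all six conditions are linked through the hubs (1) and (4), completing the equivalence.
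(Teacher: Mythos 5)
Your proposal is correct and follows essentially the same route as the paper's own proof: the identical decomposition into the equivalences $(1)\Leftrightarrow(2)$ via Lemma \ref{allsoberimpli1}, $(2)\Leftrightarrow(3)$ via the main diagram and Proposition \ref{spaisosober}, $(1)\Leftrightarrow(4)$ via Lemmas \ref{booleanareprimes} and \ref{coveredisolated}, $(4)\Leftrightarrow(5)$ via the Skula-space identification, and $(4)\Leftrightarrow(6)$ via Lemma \ref{lastimpli1}. Your extra bookkeeping (sobriety of $\pt(L)$, the factorization of the map in item (3)) is implicit in the paper's argument and adds nothing that changes the approach.
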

\begin{proof}
The equivalence between (1) and (2) is Lemma \ref{allsoberimpli1}. As we have observed already, all subspaces of $\pt(L)$ being sober is a condition equivalent to the right vertical arrow of the main diagram of this section being an isomorphism. Then, (2) and (3) are equivalent. By Lemma \ref{coveredisolated}, a prime $p\in L$ being covered is equivalent to the point $\bl(p)$ of the space $\pt(\SLop)$ being isolated, and so all primes of $L$ being weakly covered is equivalent to $\pt(\SLop)$ being discrete. Then, (1) and (4) are equivalent. Since $\pt(\SLop)$ is the Skula space of $\pt(L)$, the space $\pt(\SLop)$ being discrete is equivalent to $\pt(L)$ being a $T_D$ space, and so (4) and (5) are equivalent. Lemma \ref{lastimpli1} states the equivalence of (4) and (6).
\end{proof}

\subsection{When are all sublocales spatial?}

A frame $L$ is said to be \textit{totally spatial} if all its sublocales are spatial. In this subsection we explore different characterizations of this condition, with special attention to what this condition means for the prime elements of $L$. For $M\se L$ a subset of a frame $L$, we say that $m\in M$ is an \textit{essential} element of the meet $M$ if $\bwe M\neq \bwe (M{\sm}\{m\})$, in other words, if $\bwe (M{\sm}\{m\})\nleq m$. In \cite{niefield87}, the authors define a notion of essential prime of a certain element $a$, and the definition is phrased in terms of minimal elements of the set $\pt(\up a)$. The authors observe that in this collection every descending chain has a lower bound, and they deduce that there is a set $\mf{Min}(a)$ of primes above $a$ which are minimal in the collection $\pt(\up a)$. Then they observe that if $a$ is the meet of the primes above it then $a=\bwe \mf{Min}(a)$. Finally, for a frame $L$ and an element $a\in L$ which is the meet of the primes above it, they define $p$ to be an essential prime of some $a\in L$ if $p\in \mf{Min}(a)$ and $\bwe \mf{Min}(a){\sm}\{p\}\neq a$. We decided to adapt their definition and results so that we do not need Zorn's Lemma to phrase them or prove them, since this is not much more complicated than their approach. For a frame $L$ and an element $a\in L$ which is the meet of the primes above it, we say that $p\in \pt(L)$ is an \textit{essential} prime of $a$ if $p$ is an essential element of the particular meet $\bwe \{q\in \pt(\up a):\mbox{ $q=p$ or $p\nleq q$}\}$. Then, $p$ is an essential prime of $a$ if and only if $a\leq p$ and $\bwe (\pt(\up a){\sm}\up p)\nleq p$. Now, let us prove our versions of the results in \cite{niefield87}.

\begin{lemma}\label{essential1chara}
If $L$ is a frame and $a\in L$ is the meet of the primes above it, we have that for every $x\in L$ 
\[
x\ra a=\bwe \pt(\up a){\sm}\up x,
\]
and so a prime $p$ with $a\leq p$ is essential if and only if $p_a\ra a\nleq p_a$.
\end{lemma}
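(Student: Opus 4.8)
The plan is to prove the displayed identity by establishing the two inequalities separately, and then to obtain the characterization of essential primes by specializing to $x=p$. Throughout, write $R=\pt(\up a)\sm \up x=\{q\in \pt(\up a):x\nleq q\}$, so that the goal is the equation $x\ra a=\bwe R$.

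For the inequality $x\ra a\leq \bwe R$, I would fix an arbitrary $q\in R$ and show $x\ra a\leq q$. Since $a\leq q$ and $x\ra(-)$ is order-preserving in its second argument, we have $x\ra a\leq x\ra q$. Now $q$ is prime and $x\nleq q$, so item~(2) of Lemma~\ref{manylocalethings} gives $x\ra q=q$; hence $x\ra a\leq q$. As $q\in R$ was arbitrary, taking the meet yields $x\ra a\leq \bwe R$.

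For the reverse inequality $\bwe R\leq x\ra a$, I would invoke the Heyting adjunction recorded in Lemma~\ref{impliadj}, namely $c\leq x\ra a$ iff $c\we x\leq a$. Thus it suffices to prove $x\we \bwe R\leq a$. By the standing hypothesis $a$ is the meet of the primes above it, i.e. $a=\bwe \pt(\up a)$, so it is enough to verify $x\we \bwe R\leq p$ for every $p\in \pt(\up a)$. Here I split into two cases. If $x\leq p$, then $x\we \bwe R\leq x\leq p$. If instead $x\nleq p$, then $p\in R$ by definition of $R$, so $\bwe R\leq p$ and hence $x\we \bwe R\leq p$. In either case the inequality holds for each $p\in \pt(\up a)$, giving $x\we \bwe R\leq \bwe \pt(\up a)=a$, and the adjunction delivers $\bwe R\leq x\ra a$.

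Combining the two inequalities proves $x\ra a=\bwe R=\bwe(\pt(\up a)\sm \up x)$. For the final clause I would specialize to $x=p$ for a prime $p$ with $a\leq p$: the identity becomes $p\ra a=\bwe(\pt(\up a)\sm \up p)$, and by the definition of essentiality $p$ is an essential prime of $a$ exactly when $\bwe(\pt(\up a)\sm \up p)\nleq p$, that is, precisely when $p\ra a\nleq p$. The argument is short, and the only real subtlety is the reverse inequality, where the hypothesis $a=\bwe\pt(\up a)$ is indispensable and the case split on whether a prime above $a$ lies above $x$ is the crux; I do not anticipate any obstacle beyond organizing these two cases cleanly.
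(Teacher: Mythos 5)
Your proof is correct, and it reaches the identity by a slightly different route than the paper. The paper's proof is a one-line equational computation: since $a=\bwe \pt(\up a)$, it writes $x\ra a=x\ra \bwe \pt(\up a)=\bwe\{x\ra p:p\in \pt(\up a)\}$, tacitly using that $x\ra(-)$ preserves meets (being the right adjoint of $x\we(-)$), and then collapses each conjunct to $1$ or $p$ via item (2) of Lemma \ref{manylocalethings}, exactly the dichotomy you also use. You instead prove the two inequalities separately: the inequality $x\ra a\leq \bwe(\pt(\up a)\sm \up x)$ by monotonicity plus the same prime dichotomy, and the reverse inequality by the bare adjunction of Lemma \ref{impliadj} together with a case split over the primes above $a$. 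The paper's version is shorter and makes the mechanism visible in a single chain; yours has the small advantage of being self-contained relative to the paper's stated lemmas, since the meet-preservation law $x\ra \bwe_i y_i=\bwe_i(x\ra y_i)$ is nowhere proved or cited in the paper (Corollary \ref{implijoinmeet} only covers the first argument), whereas your case analysis re-derives precisely the instance of it that is needed. The final specialization to $x=p$ to get the essentiality criterion is the same in both.
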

\begin{proof}
Since the element $a\in L$ is assumed to be the meet of the primes above it we have $x\ra a=x\ra \bwe \pt(\up a)=\bwe \{x\ra p:p\in \pt(\up a)\}$. Each of the conjuncts of this meet is of the form $x\ra p$, and so by the property of prime elements stated in item (2) of Lemma \ref{manylocalethings}, we have that if $x\leq p$ it is $1$, and if $x\nleq p$ it is $p$. The meet itself, then is equal to $\bwe \{q\in \pt(\up a): x\nleq q\}$.
\end{proof}

\begin{lemma}\label{essential2chara}
If $L$ is a frame and $a\in L$ is the meet of the primes above it, we have that $p_a$ is an essential prime of $a$ if and only if $p_a=(p_a\ra a)\ra a$.
\end{lemma}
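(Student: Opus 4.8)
The plan is to recognize that the condition $p_a = (p_a \ra a) \ra a$ is, by item (2) of Lemma \ref{booleanfacts}, exactly the statement that $p_a \in \bl(a)$; so the lemma really asserts that a prime above $a$ is essential precisely when it lies in the Boolean sublocale $\bl(a)$. I will combine this with the reformulation recorded in Lemma \ref{essential1chara}, namely that a prime $p_a$ with $a \le p_a$ is essential if and only if $p_a \ra a \nleq p_a$. Throughout I will use that $p_a \le (p_a \ra a) \ra a$ holds automatically by item (2) of Corollary \ref{implijoinmeet}, so that proving the equality $p_a = (p_a \ra a)\ra a$ reduces to establishing the reverse inequality $(p_a \ra a) \ra a \le p_a$.

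For the forward direction I assume $p_a$ essential, i.e.\ $p_a \ra a \nleq p_a$. Since $p_a$ is prime, item (2) of Lemma \ref{manylocalethings} applied to the element $p_a \ra a$ gives $(p_a \ra a) \ra p_a = p_a$. Because $a \le p_a$ and Heyting implication is monotone in its second argument, this yields $(p_a \ra a)\ra a \le (p_a \ra a) \ra p_a = p_a$, which is precisely the reverse inequality I need; together with the automatic one it gives the desired equality $p_a = (p_a \ra a)\ra a$.

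For the converse I assume $p_a = (p_a \ra a)\ra a$, so that $p_a \in \bl(a)$, and argue by contradiction that essentiality must hold. Suppose instead $p_a \ra a \le p_a$. By item (3) of Lemma \ref{booleanfacts} the sublocale $\bl(a)$ is a Boolean algebra whose bottom is $a$ and in which the complement of each element $b$ is $b \ra a$; in particular $p_a \ra a$ is the complement of $p_a$. Since meets in $\bl(a)$ agree with those of $L$, the relation $p_a \ra a \le p_a$ forces the complement below its own element, which in a Boolean algebra collapses it to the bottom, so $p_a \ra a = a$. But then $p_a = (p_a \ra a)\ra a = a \ra a = 1$, contradicting that $p_a$ is prime (hence different from $1$). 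Therefore $p_a \ra a \nleq p_a$, i.e.\ $p_a$ is essential.

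I expect the converse to be the delicate part: the forward direction is a one-line consequence of primality together with monotonicity of implication, whereas the converse requires exploiting the Boolean structure of $\bl(a)$ and the interplay between the complementation $-\ra a$ and primality to extract the contradiction. The one point to verify carefully is that the bottom of $\bl(a)$ is $a$ and that $-\ra a$ genuinely computes complements there, both of which are furnished by Lemma \ref{booleanfacts}.
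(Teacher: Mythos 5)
Your proof is correct and follows essentially the same route as the paper's: both reduce via Lemma \ref{essential1chara} to showing that $p_a\ra a\nleq p_a$ holds if and only if $p_a=(p_a\ra a)\ra a$, use primality of $p_a$ to obtain the nontrivial inequality $(p_a\ra a)\ra a\leq p_a$ in the forward direction, and in the converse derive the contradiction $p_a=1$ from the assumption $p_a\ra a\leq p_a$. The only differences are cosmetic: where you cite item (2) of Lemma \ref{manylocalethings} together with monotonicity of $x\ra(-)$, and the Boolean structure of $\bl(a)$ from Lemma \ref{booleanfacts}, the paper carries out the same Heyting-algebra computations inline.
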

\begin{proof}
We claim that for every prime $p$ with $a\leq p$ we have that $p\ra a\nleq p$ if and only if $p=(p\ra a)\ra a$. Suppose that the former holds. We already know that $p\leq (p\ra a)\ra a$, by item (2) of Corollary \ref{implijoinmeet}. To show the reverse inequality, we show that $z\leq (p\ra a)\ra a$ implies $z\leq p$ for every $z\in L$. If $z\leq (p\ra a)\ra a$, we have that $z\we (p\ra a)\leq a$. Since $a$ is the meet of the primes above it, in particular this means that $z\we (p\ra a)\leq p$. By primality of $p$, this means that either $z\leq p$ or $p\ra a\leq p$. By the second option does not hold by assumption, and so $z\leq p$. Indeed, then, $p=(p\ra a)\ra a$. For the other direction of the main claim, suppose that $(p\ra a)\ra a=p$. Towards contradiction, suppose that $p\ra a\leq p$. By assumption, this implies that $p\ra a\leq (p\ra a)\ra a$. So, $p\ra a\leq a$. Since $-\ra a$ is antitone (by Lemma \ref{impliadj}) this means $1=a\ra a\leq (p\ra a)\ra a=p$. This contradicts $p$ being prime. Then, we must have $p\ra a\nleq p$. We have now proven that for every prime $p\in \pt(\up a)$ we have that $(p\ra a)\ra a=p$ is equivalent to $p\ra a\nleq p$. The claim follows from this fact and from Lemma \ref{essential1chara}.  
\end{proof}

\begin{lemma}\label{essentialiffinboolean}
Suppose that $L$ is a frame and that $a\in L$ is an element such that it is the meet of the primes above it. We have that $p_a$ is an essential prime of $a$ if and only if $p_a\in \bl(a)$. Thus, $p$ is an essential prime of $a$ if and only if it appears in any sublocale in which $a$ appears.
\end{lemma}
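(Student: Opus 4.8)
The plan is to observe that the algebraic characterization of essentiality just obtained in Lemma \ref{essential2chara} is, verbatim, the membership criterion for the Boolean sublocale $\bl(a)$ recorded in Lemma \ref{booleanfacts}. Consequently almost no work is required beyond placing the two statements side by side.

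First I would recall that, by Lemma \ref{essential2chara} (using that $a$ is the meet of the primes above it), for a prime $p_a$ with $a\leq p_a$ we have
\[
p_a \text{ is an essential prime of } a \iff p_a = (p_a \ra a)\ra a.
\]
Then I would invoke item (2) of Lemma \ref{booleanfacts}, applied with $x=a$, which states that for every $b\in L$ we have $b\in \bl(a)$ if and only if $b=(b\ra a)\ra a$. Taking $b=p_a$ gives
\[
p_a \in \bl(a) \iff p_a = (p_a \ra a)\ra a.
\]
Chaining these two equivalences yields exactly ``$p_a$ is an essential prime of $a$ $\iff$ $p_a\in\bl(a)$'', which is the first assertion.

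For the concluding ``thus'', I would unpack the definition of $\bl(a)$ as $\bca\{S\in\SL : a\in S\}$, the smallest sublocale containing $a$. The condition $p_a\in\bl(a)$ then means precisely that $p_a$ lies in this intersection, i.e. that $p_a$ belongs to every sublocale containing $a$: if $p_a$ lies in all such sublocales then it lies in their intersection $\bl(a)$, and conversely if $p_a\in\bl(a)$ then, since $\bl(a)\se S$ for each sublocale $S$ with $a\in S$, we get $p_a\in S$. Combined with the first assertion, this shows that $p_a$ is an essential prime of $a$ exactly when it appears in every sublocale in which $a$ appears.

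I do not expect a genuine obstacle: the content has already been isolated in Lemmas \ref{essential2chara} and \ref{booleanfacts}, and the two fixpoint equations $(p_a\ra a)\ra a = p_a$ are literally the same. The only point deserving a moment's care is the translation between ``$p_a\in\bl(a)$'' and ``$p_a$ is in every sublocale containing $a$'', which is immediate from $\bl(a)$ being by definition the least such sublocale.
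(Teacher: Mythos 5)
Your proposal is correct and follows exactly the paper's own argument: the paper's proof is literally ``combine Lemma \ref{essential2chara} with item (2) of Lemma \ref{booleanfacts}.'' Your additional unpacking of the ``thus'' clause via the definition of $\bl(a)$ as the smallest sublocale containing $a$ is a faithful elaboration of what the paper leaves implicit.
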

\begin{proof}
To see this, we combine Lemma \ref{essential2chara} with item (2) of Lemma \ref{booleanfacts}.
\end{proof}
Let us now explore total spatiality in terms of Boolean sublocales.
\begin{lemma}\label{opandb}
For a frame $L$ and for $x,y\in L$ we have that $\bl(x\ra y)=\op (x)\cap \bl(y)$.
\end{lemma}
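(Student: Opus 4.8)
The plan is to reduce everything to the explicit descriptions of Boolean and open sublocales recorded in Lemma \ref{booleanfacts} (and in the two facts about open sublocales stated just before it), together with the currying identity $z \ra (x \ra y) = (z \we x) \ra y$ for the Heyting implication, which is the same identity already used inside the proof of Lemma \ref{booleanfacts}(1).

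The heart of the argument is a reformulation of the left-hand side. By item (1) of Lemma \ref{booleanfacts} we have $\bl(x \ra y) = \{z \ra (x \ra y) : z \in L\}$, and rewriting each generator via $z \ra (x \ra y) = (z \we x) \ra y$ turns this into $\{(z \we x) \ra y : z \in L\}$. As $z$ ranges over $L$ the element $z \we x$ ranges exactly over $\da x$, since every $w \le x$ arises as $w = w \we x$; hence
\[
\bl(x \ra y) = \{w \ra y : w \le x\}.
\]
Once this description is in place, both inclusions are immediate.

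For $\bl(x\ra y) \se \op(x)\cap \bl(y)$ I would take $a = w \ra y$ with $w \le x$. Then $a \in \bl(y) = \{u \ra y : u \in L\}$ directly. To see $a \in \op(x)$ I would use the characterization that $a \in \op(x)$ iff $a = x \ra a$: here $x \ra a = x \ra (w \ra y) = (x \we w) \ra y = w \ra y = a$, the penultimate equality holding because $w \le x$. For the reverse inclusion I would take $a \in \op(x) \cap \bl(y)$, so $a = x \ra a$ and $a = w \ra y$ for some $w \in L$; substituting gives $a = x \ra (w \ra y) = (x \we w) \ra y$, and since $x \we w \le x$ this exhibits $a$ as a member of the set $\{u \ra y : u \le x\} = \bl(x \ra y)$.

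There is no genuine obstacle here: the only point requiring a little care is the bookkeeping with the currying identity together with the observation that $\{z \we x : z \in L\} = \da x$, which jointly reduce the claimed equality to a routine comparison of two explicitly described subsets of $L$.
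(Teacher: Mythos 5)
Your proof is correct and takes essentially the same route as the paper's: both hinge on the description $\bl(y)=\{z\ra y:z\in L\}$ from Lemma \ref{booleanfacts}(1), the currying identity $z\ra(x\ra y)=(z\we x)\ra y$, and the fixed-point characterization of $\op(x)$, and your reverse inclusion is exactly the paper's computation. The only cosmetic difference is in the forward inclusion, where the paper simply observes that the single element $x\ra y$ lies in both $\op(x)$ and $\bl(y)$ and invokes minimality of $\bl(x\ra y)$ as the smallest sublocale containing $x\ra y$, whereas you verify membership element-by-element via the reformulation $\bl(x\ra y)=\{w\ra y:w\le x\}$.
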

\begin{proof}
Let us show both set inclusions. We have that $x\ra y\in \bl(y)$, since by item (1) of Lemma \ref{booleanfacts} the sublocale $\bl(y)$ is $\{z\ra y:z\in L\}$. So, $\bl(x\ra y)\se \bl(y)$. We also have that $x\ra y\in \op (x)$, since we also have $\op(x)=\{x\ra z:z\in L\}$. For the other set inclusion, suppose that we have some $z\in \op (x)\cap \bl(y)$. This tells us two facts: that $x\ra z=z$ and that $z=z'\ra y$ for some $z'\in L$. Combining the two facts we obtain that $z=x\ra (z'\ra y)=(x\wedge z')\ra y=z'\ra (x\ra y)$ and so $z$ must indeed be an element of $\bl (x\ra y)$, by item (1) of Lemma \ref{booleanfacts}.
\end{proof}
\begin{proposition}\label{enoughonly1point}
A frame $L$ is totally spatial if and only if every Boolean sublocale other than $\bl(1)=\{1\}$ has at least a point.
\end{proposition}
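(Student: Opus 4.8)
The plan is to reduce total spatiality to a statement about Boolean sublocales alone, and then to analyze a single Boolean sublocale $B=\bl(x)$ as a complete Boolean algebra. First I would record the reduction: every sublocale $S$ satisfies $S=\bve\{\bl(s):s\in S\}$, since each $\bl(s)$ is the smallest sublocale containing $s$ and hence lies in $S$, while conversely $s\in\bl(s)$. Using the description $\spa(S)=\bve\{\bl(p):p\in\pt(S)\}$ together with $\pt(\bl(s))=\pt(L)\cap\bl(s)\se\pt(S)$, one sees that if every $\bl(s)$ is spatial then $S=\bve\{\bl(p):p\in\pt(S)\}=\spa(S)$, so $S$ is spatial. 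Thus $L$ is totally spatial if and only if every Boolean sublocale is spatial, and it remains to show that this is equivalent to every Boolean sublocale other than $\{1\}$ having a point.

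The forward implication is then immediate: if $\bl(x)$ is spatial and $\bl(x)\neq\{1\}$, then $x\neq 1$, so the bottom $x$ of $\bl(x)$ is distinct from its top; being a meet of primes it would otherwise be the empty meet $1$, so $\bl(x)$ must contain at least one prime, i.e.\ a point. The substance is the converse. Here I would fix $x\in L$ and work inside $B=\bl(x)$, which by Lemma \ref{booleanfacts} is a complete Boolean algebra with least element $x$. Since $\pt(B)=\pt(L)\cap B$, and by Lemma \ref{atomsofba} the meet primes of $B$ are its coatoms, the points of $B$ are exactly the coatoms of $B$; combining the characterization of spatial frames as those whose elements are all meets of primes with Lemma \ref{atomsofba}, $B$ is spatial if and only if every element is a meet of coatoms, i.e.\ if and only if $B$ is atomic. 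So it suffices to prove that $B$ is atomic.

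The key step is to feed the hypothesis into all the sub-Boolean-sublocales of $B$ simultaneously. The Boolean sublocales of $L$ contained in $B$ are exactly the $\bl(z)$ with $z\in B$, and a direct computation inside the Boolean algebra $B$ identifies $\bl(z)$ with the up-set $\up z$ of $z$ in $B$; consequently its points are precisely the coatoms of $B$ lying above $z$. Therefore the hypothesis that every nontrivial $\bl(z)$ has a point says exactly that every $z\in B$ with $z\neq 1$ lies below some coatom of $B$. Passing to complements in $B$ (a coatom above $z$ corresponds to an atom below $\neg z$), this is precisely the statement that every nonzero element of $B$ lies above an atom, that is, that $B$ is atomic. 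Hence $B=\bl(x)$ is spatial, and since $x$ was arbitrary every Boolean sublocale is spatial, which by the reduction completes the converse.

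The main obstacle I expect is the identification of $\bl(z)$, for $z\in B$, with the up-set $\up z$ inside $B$: this requires checking that the Boolean sublocales of $L$ sitting below $\bl(x)$ are governed entirely by the internal Boolean structure of $B$, equivalently that the sublocales of a complete Boolean algebra are exactly the up-sets of their least element. Once this is in hand, the point-existence hypothesis converts by mere complementation into atomicity. The conceptual content worth emphasizing is that requiring only \emph{one} point in each Boolean sublocale is already enough: applied uniformly to all the $\bl(z)$ below a given $\bl(x)$, it forces the full atomicity, and hence spatiality, of that Boolean sublocale.
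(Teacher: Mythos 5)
Your proof is correct, and it takes a genuinely different route from the paper's. The paper's converse is a proof by contradiction: given a nonspatial sublocale $S$, it chooses $s\in S$ with $s<\bwe_i p_i$ where $\{p_i:i\in I\}=\pt(\up s)\cap S$, observes that $\bl(\bwe_i p_i\ra s)$ is then nontrivial, and uses the identity $\bl(x\ra y)=\op(x)\cap\bl(y)$ (Lemma \ref{opandb}) to show that any point $p$ of this Boolean sublocale is simultaneously a prime of $S$ above $s$ (forcing $\bwe_i p_i\leq p$) and an element of $\op(\bwe_i p_i)$ (forcing $\bwe_i p_i\nleq p$), a contradiction. You instead prove two structural facts: (i) total spatiality reduces to spatiality of Boolean sublocales, since $S=\bve\{\bl(s):s\in S\}$ and, by the formula $\spa(S)=\bve\{\bl(p):p\in\pt(S)\}$ together with $\pt(\bl(s))\se\pt(S)$, spatiality of each $\bl(s)$ gives $S\se\spa(S)$; and (ii) inside $B=\bl(x)$ the Boolean sublocales $\bl(z)$, $z\in B$, are exactly the up-sets $\up z\cap B$, which indeed follows from item (2) of Lemma \ref{booleanfacts} because the Heyting implication of $B$ is the restriction of that of $L$ and in a Boolean algebra $(a\ra z)\ra z$ is the join of $a$ and $z$; hence your hypothesis becomes the statement that every non-top element of $B$ lies below a coatom, i.e., by complementation, that $B$ is atomic, which for a complete Boolean algebra is equivalent to every element being a meet of coatoms and so (Lemma \ref{atomsofba}) to spatiality of $B$. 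Both arguments rest on the same ingredients (Lemma \ref{booleanfacts}, the description of $\spa$, Lemma \ref{atomsofba}); the paper's is shorter and its witness construction feeds directly into the essential-prime machinery that follows (Lemma \ref{essentialiffinboolean}), while yours is more modular and isolates two statements of independent interest --- that total spatiality is detected by Boolean sublocales alone, and that a Boolean sublocale is spatial iff it is atomic --- making it conceptually transparent why a single point per Boolean sublocale bootstraps to enough points. The steps you leave implicit (that the implication of $B$ restricts that of $L$, and that a complete Boolean algebra is atomic iff every element is a meet of coatoms) are routine, but a full write-up should include them.
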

\begin{proof}
Suppose that $L$ is a frame such that all its sublocales are spatial. If $\bl(x)$ is a Boolean sublocale with $x\neq 1$, we must have $\mathfrak{M}(\pt(\bl(x)))=\bl(x)$, and in particular $\mathfrak{M}(\pt(\bl(x)))\neq \{1\}$. Then, there must be at least a prime in $\bl(x)$. For the converse, suppose that every nontrivial Boolean sublocale contains at least a prime. Towards contradiction, let us suppose that there is a nonspatial sublocale $S\se L$. Then, we must have $S\nsubseteq \mathfrak{M}(\pt(S))$. In particular, there must be some $s\in S$ such that $s< \bwe_i p_i$, where $\{p_i:i\in I\}=\pt(\up s)\cap S$. Then, we have that $\bwe _i p_i\ra s\neq 1$, and so the Boolean sublocale $\bl(\bwe_i p_i\ra s)$ must be nontrivial, and by our assumption this means that it must contain at least a point. Let this be $p$. By Lemma \ref{opandb}, we have $\bl(\bwe_i p_i\ra s)=\op (\bwe _ip_i)\cap \bl(s)$. Then, we must have both $p\in \op (\bwe_i p_i)$ and $p\in \bl(a)$. The first fact implies that $\bwe _ip_i\ra p=p$, and this implies $\bwe_i p_i\nleq p$ by primality of $p$. The second fact implies that $p=y\ra s$ for some $y\in L$, and this implies that $p\in S$. By definition of the collection $\{p_i:i\in I\}$, our two deductions that $\bwe_ip_i\nleq p$ and that $p$ is a prime with $p\in S$ contradict each other. Then, there cannot be any nonspatial sublocale in $L$.
\end{proof}

\begin{corollary}\label{essentialimportant}
A frame $L$ is totally spatial if and only if it is spatial and all its elements other than $1$ have an essential prime.
\end{corollary}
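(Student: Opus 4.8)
The plan is to read this corollary directly off Proposition~\ref{enoughonly1point} together with Lemma~\ref{essentialiffinboolean}. Proposition~\ref{enoughonly1point} already characterizes total spatiality as the condition that every Boolean sublocale $\bl(x)$ with $x\neq 1$ contains a prime, while Lemma~\ref{essentialiffinboolean} tells us that, provided $a$ is the meet of the primes above it, the primes sitting inside $\bl(a)$ are \emph{precisely} the essential primes of $a$. So the whole argument amounts to translating the statement ``$\bl(x)$ has a point'' into the statement ``$x$ has an essential prime'', and the only thing one must be careful about is that this translation is licensed by Lemma~\ref{essentialiffinboolean} exactly when $x$ is a meet of the primes above it --- which is what spatiality buys us.

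For the forward implication I would first observe that $L$ is itself a sublocale of $L$ (it is the top element of $\SL$), so if every sublocale is spatial then in particular $L$ is spatial. By the characterization of spatial frames as those in which every element is a meet of the primes above it, each $a\neq 1$ then satisfies the hypothesis of Lemma~\ref{essentialiffinboolean}. Since $a\in \bl(a)$ (by item (1) of Lemma~\ref{booleanfacts}, using $1\ra a=a$) and $a\neq 1$, the sublocale $\bl(a)$ differs from $\{1\}$, so by Proposition~\ref{enoughonly1point} it contains some prime $p$; Lemma~\ref{essentialiffinboolean} then declares $p$ to be an essential prime of $a$.

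For the converse, assume $L$ is spatial and every $a\neq 1$ has an essential prime. By Proposition~\ref{enoughonly1point} it suffices to show that each Boolean sublocale $\bl(x)\neq \{1\}$ has a point. Since $\bl(1)=\{1\}$ and, conversely, $\bl(x)=\{1\}$ forces $x=1$ (as $x\in \bl(x)$), the hypothesis $\bl(x)\neq \{1\}$ is just $x\neq 1$. Spatiality again makes $x$ the meet of the primes above it, so the essential prime $p$ of $x$ supplied by the hypothesis lies in $\bl(x)$ by Lemma~\ref{essentialiffinboolean}, furnishing the required point.

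The argument is essentially bookkeeping once Proposition~\ref{enoughonly1point} and Lemma~\ref{essentialiffinboolean} are in hand; the one genuine point of care --- and the reason spatiality must be assumed explicitly on the right-hand side rather than folded into the essential-prime condition --- is that the notion of essential prime is only defined for elements that are meets of the primes above them. One therefore cannot even speak of essential primes of arbitrary elements of a non-spatial frame, and establishing spatiality first (for free in the forward direction, by hypothesis in the backward one) is exactly what keeps Lemma~\ref{essentialiffinboolean} applicable throughout.
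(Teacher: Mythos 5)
Your proof is correct and follows essentially the same route as the paper: both directions reduce the statement to Proposition~\ref{enoughonly1point} via Lemma~\ref{essentialiffinboolean}, using spatiality (free in the forward direction, hypothesized in the backward one) to ensure each $a\neq 1$ is a meet of the primes above it so that the lemma applies. Your explicit bookkeeping --- that $L$ is spatial because it is a sublocale of itself, and that $\bl(x)=\{1\}$ iff $x=1$ --- only makes the paper's argument slightly more careful, not different.
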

\begin{proof}
If a frame $L$ is spatial and all its elements have an essential prime, by Lemma \ref{essentialiffinboolean} every element $a\in L$ which is not $1$ is such that there is a prime in $\bl(a)$. Then, the frame is totally spatial by Lemma \ref{enoughonly1point}. Conversely, the frame $L$ is totally spatial, in particular it is spatial. If $a\in L$ is an element other than $1$, we have that $\bl(a)$ contains at least a prime, by Lemma \ref{enoughonly1point}. Since $a$ is the meet of the primes above it, Lemma \ref{essentialiffinboolean} applies and $a$ has an essential prime.
\end{proof}

For a frame $L$ and an element $a\in L$ such that it is the meet of the primes above it, we denote as $\mf{Ess}(a)$ the collection of its essential primes. 

\begin{lemma}\label{meetofess}
A frame is totally spatial if and only if every element $a\in L$ is the meet of its essential primes.
\end{lemma}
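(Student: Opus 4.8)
The plan is to prove both implications by reducing to Corollary \ref{essentialimportant}, together with the identification of essential primes as the points of $\bl(a)$ supplied by Lemma \ref{essentialiffinboolean}. Throughout, recall that whenever $a$ is the meet of the primes above it, Lemma \ref{essentialiffinboolean} identifies $\mf{Ess}(a)$ with $\pt(\bl(a))$, and that the bottom element of the Boolean sublocale $\bl(a)$ is $a$ itself, as observed in the proof of Lemma \ref{booleanfacts}.

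For the forward direction, suppose $L$ is totally spatial. Then $L$ is in particular spatial, so every $a\in L$ is the meet of the primes above it and $\mf{Ess}(a)$ is defined. Since every sublocale of $L$ is spatial, each $\bl(a)$ is spatial, so $\bl(a)=\spa(\bl(a))=\mathfrak{M}(\pt(\bl(a)))$. The bottom element of this sublocale, computed as its total meet, equals $\bwe \pt(\bl(a))$; but the bottom element of $\bl(a)$ is $a$, so $a=\bwe \pt(\bl(a))$. By Lemma \ref{essentialiffinboolean} this says precisely $a=\bwe \mf{Ess}(a)$.

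For the converse, assume every $a\in L$ is the meet of its essential primes. Since the expression $a=\bwe \mf{Ess}(a)$ presupposes that $\mf{Ess}(a)$ is defined, every $a$ is the meet of the primes above it, whence $L$ is spatial; equivalently, $a=\bwe \mf{Ess}(a)$ already exhibits $a$ as a meet of primes. Moreover, if $a\neq 1$ then $\mf{Ess}(a)$ cannot be empty, for otherwise $a=\bwe \emptyset=1$; hence every element other than $1$ has an essential prime. By Corollary \ref{essentialimportant}, $L$ is totally spatial.

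The main care point is that the forward direction requires the full strength of total spatiality, namely that the \emph{particular} sublocale $\bl(a)$ is spatial, rather than merely the existence of one essential prime provided by Corollary \ref{essentialimportant}; the key computation is the identification of the bottom of the spatial sublocale $\bl(a)$ with $\bwe \mf{Ess}(a)$. The only subtlety in the converse is the definedness of $\mf{Ess}(a)$, which must be invoked to extract spatiality before the corollary can be applied.
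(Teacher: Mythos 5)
Your proof is correct and follows essentially the same route as the paper's: the forward direction uses spatiality of the particular sublocale $\bl(a)$ to identify its bottom element $a$ with $\bwe\pt(\bl(a))=\bwe\mf{Ess}(a)$ via Lemma \ref{essentialiffinboolean}, and the converse extracts spatiality and nonemptiness of $\mf{Ess}(a)$ for $a\neq 1$ and then concludes by Corollary \ref{essentialimportant}, which is exactly the (implicit) final step in the paper's argument.
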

\begin{proof}
Let $L$ be a totally spatial frame. As it is the case in every frame, the element $1$ is always the meet of primes above it, and it is also the meet of its essential primes, since it is the empty meet. Let us now consider an element $a\in L$ with $a\neq 1$. The sublocale $\bl(a)$ is spatial, and so it must be the case that every element of $\bl(a)$ is a meet of primes. Since $a$ is the bottom element of $\bl(a)$, in particular, $a=\bwe \pt(\bl(a))$. By Lemma \ref{essentialiffinboolean}, we have $\pt(\bl(a))=\mf{Ess}(a)$. Conversely, if every element in $L$ is the meet of the essential primes above it, in particular the frame $L$ is spatial. For $a\in L$ with $a\neq 1$, the set $\mf{Ess}(a)$ must be nonempty or we would have $\bwe \mf{Ess}(a)=1\neq a$, contradicting our hypothesis.
\end{proof}
The following results are known, but they were proven not in \cite{niefield87} but in \cite{ISBELL91}. There, it is proven that a $T_0$ space is sober and such that $\Om(X)$ has a spatial assembly and that this holds if and only if every closed subspace of $X$ is a discrete dense subspace. We present a proof of this result in terms of prime elements to tie these results more directly with those of \cite{niefield87}.

\begin{lemma}\label{essentialimplidds}
If a frame $L$ is totally spatial every closed subspace of $\pt(L)$ has a discrete dense subspace. In particular, for a closed subspace $\pt(\up a)$, its discrete dense subspace is $\mf{Ess}(a)=\pt(\bl(a))$.
\end{lemma}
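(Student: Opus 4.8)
The plan is to unpack what it means for $\pt(\up a)$ to have a discrete dense subspace, and to show that $\mf{Ess}(a) = \pt(\bl(a))$ is exactly such a subspace. First I would fix an element $a \in L$ with $a \neq 1$; since $L$ is totally spatial it is in particular spatial, so every element is the meet of the primes above it, and Lemma \ref{essentialiffinboolean} and Lemma \ref{essentialimportant} apply. The closed subspace in question is $\pt(\up a) = \up a \cap \pt(L)$, which is the set of all primes above $a$. By Lemma \ref{essentialiffinboolean} the essential primes of $a$ are exactly $\pt(\bl(a))$, so I would take $\mf{Ess}(a)$ to be the candidate discrete dense subspace.

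Next I would verify discreteness. Since $\bl(a)$ is a Boolean sublocale (Lemma \ref{booleanfacts}(3)), its spatialization $\spa(\bl(a)) = \mathfrak{M}(\pt(\bl(a)))$ is a spatial sublocale whose frame is Boolean; but total spatiality forces $\bl(a)$ itself to be spatial, so $\pt(\bl(a))$ is homeomorphic to the prime spectrum of a Boolean algebra. By Lemma \ref{spatialba} this spectrum is discrete. Thus $\mf{Ess}(a) = \pt(\bl(a))$ is a discrete space, and I would want to check that the subspace topology it inherits from $\pt(\up a)$ coincides with this discrete topology, which follows because the points of $\bl(a)$ are exactly the essential primes and the opens of $\pt(\bl(a))$ are traces of opens of $\pt(L)$ restricted along the sublocale inclusion.

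Finally I would establish density of $\mf{Ess}(a)$ in $\pt(\up a)$. This is where the real content lies. The closure of $\mf{Ess}(a)$ inside $\pt(\up a)$ is governed by meets: a prime $q \geq a$ lies in the closure of $\mf{Ess}(a)$ precisely when $q$ cannot be separated from $\mf{Ess}(a)$ by a basic open, which amounts to $q \geq \bwe \mf{Ess}(a)$. By Lemma \ref{meetofess}, total spatiality gives $a = \bwe \mf{Ess}(a)$, so every prime $q$ above $a$ is above $\bwe \mf{Ess}(a)$, and hence lies in the closure. Therefore $\mf{Ess}(a)$ is dense in $\pt(\up a)$. The main obstacle is making the closure computation precise at the pointfree level: I must translate the topological statement ``$q$ is in the closure of $\mf{Ess}(a)$'' into the order-theoretic statement ``$q \geq \bwe \mf{Ess}(a)$'' correctly, using that in $\pt(\up a)$ the closed sets are exactly the sets of primes above some element of $\up a$, so the closure of a set of primes $P$ is $\pt(\up (\bwe P)) \cap \pt(\up a)$. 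Once this dictionary is set up, density is immediate from $\bwe \mf{Ess}(a) = a$, and the general claim follows since every closed subspace of $\pt(L)$ is of the form $\pt(\up a)$ for some $a \in L$.
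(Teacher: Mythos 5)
Your proposal is correct and follows essentially the same route as the paper: identify $\mf{Ess}(a)=\pt(\bl(a))$ via Lemma \ref{essentialiffinboolean}, obtain discreteness from Lemmas \ref{booleanfacts}(3) and \ref{spatialba}, and obtain density from $\bwe \mf{Ess}(a)=a$ (Lemma \ref{meetofess}); your closure computation is just the paper's ``any open of $\pt(\up a)$ disjoint from $\mf{Ess}(a)$ is empty'' argument stated in dual form. The only cosmetic difference is your detour through total spatiality and spatialization to get discreteness, which is unnecessary since the prime spectrum of any Boolean algebra is discrete regardless of spatiality.
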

\begin{proof}
Suppose that $L$ is totally spatial. Consider an arbitrary closed subspace $\pt(\up a)\se \pt(L)$. We claim that $\pt(\bl(a))=\mf{Ess}(a)$ is a discrete dense subspace. By item (3) of Lemma \ref{booleanfacts}, this subspace is the spectrum of a Boolean algebra, and so it is discrete by Lemma \ref{spatialba}. For density, we show that if there is an open of the space $\pt(\up a)$ such that it is disjoint from $\mf{Ess}(a)$, it must be empty. Suppose, then, that we have a set $\{q\in \pt(a):x\nleq q\}$ which is disjoint from $\mf{Ess}(a)$, that is, such that for every essential prime $p$ of $a$ we have $x\leq q$. Then, $x\leq \bwe \mf{Ess}(a)$. Since $L$ is totally spatial, by Lemma \ref{meetofess} we have $\bwe \mf{Ess}(a)=a$. Then, for every prime $q\in \pt(\up a)$, we have that $x\leq a\leq q$. So, the open set $\{q\in \pt(\up a):x\nleq q\}$ of $\pt(\up a)$ is empty. Then, the space $\mf{Ess}(a)$ must be a dense subspace of $\pt(\up a)$. 
\end{proof}
We would now like to prove the converse of this result.

\begin{lemma}\label{firstprime}
For a frame $L$ and a subspace $P\se \pt(L)$ we have that $p\in P$ is isolated in $P$ if and only if $p$ is an essential element of the meet $\bwe P$. 
\end{lemma}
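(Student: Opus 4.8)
The plan is to unwind the definition of the subspace topology on $P$ and match it against the order-theoretic notion of essential element. Recall that the opens of $\pt(L)$ are precisely the sets $\{q\in\pt(L):a\nleq q\}$ for $a\in L$, so the opens of the subspace $P$ are exactly the sets $\{q\in P:a\nleq q\}$. Hence $p$ is isolated in $P$ if and only if there is some $a\in L$ with $\{q\in P:a\nleq q\}=\{p\}$, which unpacks into two conditions: $a\nleq p$, while $a\leq q$ for every $q\in P$ with $q\neq p$. On the other side, by the definition given just above the statement, $p$ is an essential element of the meet $\bwe P$ exactly when $\bwe(P{\sm}\{p\})\nleq p$. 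So the whole lemma is a translation between these two formulations, and the canonical witness linking them is the element $\bwe(P{\sm}\{p\})$.

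For the forward direction I would suppose $p$ is isolated and fix a witnessing $a\in L$ as above. Since $a\leq q$ for every $q\in P{\sm}\{p\}$, I get $a\leq\bwe(P{\sm}\{p\})$; combined with $a\nleq p$ this forces $\bwe(P{\sm}\{p\})\nleq p$, which is precisely the condition that $p$ is essential in $\bwe P$.

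For the converse, the key move is to take the witness to be $a:=\bwe(P{\sm}\{p\})$ itself. Assuming $p$ essential means exactly $a\nleq p$, so $p$ lies in the open $\{q\in P:a\nleq q\}$; and for any other $q\in P$ with $q\neq p$ we have $a=\bwe(P{\sm}\{p\})\leq q$, so $q$ is excluded from that open. Hence $\{q\in P:a\nleq q\}=\{p\}$, and $p$ is isolated in $P$.

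I do not expect any genuine obstacle here: the result is a direct dictionary between the generating opens of the spectral topology and the failure-of-redundancy condition defining essentiality, with $\bwe(P{\sm}\{p\})$ playing the role of the optimal separating element in both directions. The only points requiring a little care are bookkeeping ones: that the basic opens are the sets where $a\nleq q$ (rather than $a\leq q$), and that essentiality is the strict failure $\bwe(P{\sm}\{p\})\nleq p$ rather than an equality of meets, so that one must verify the witness excludes all of $P{\sm}\{p\}$ at once, not merely each point separately.
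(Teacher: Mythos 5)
Your proposal is correct and follows essentially the same route as the paper's proof: both directions translate isolation into the existence of a witness $a$ with $a\nleq p$ and $a\leq q$ for $q\in P{\sm}\{p\}$, and both use $\bwe (P{\sm}\{p\})$ as the canonical witness for the converse. No gaps; the bookkeeping points you flag are exactly the ones the paper handles.
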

\begin{proof}
Suppose that $p\in P$ is an isolated point. Opens in $\pt(L)$ are of the form $\{p\in \pt(L):a\nleq p\}$ for some element $a\in L$. The point $p\in P$ being isolated, then, means that there is some $a\in L$ such that $a\nleq p$, and such that $a\leq q$ whenever $q\in P$ and $q\neq p$. This means that $\bwe P{\sm}\{p\}\nleq p$, or we would have that $a\leq \bwe P{\sm}\{p\}\leq p$, which would contradict $a\nleq p$. Conversely, if $p$ is essential in the meet $\bwe P$, we have that the open $\{q\in \pt(L):\bwe P{\sm}\{p\}\nleq q\}$ contains $p$ but is disjoint from $P{\sm}\{p\}$. Then, $p\in P$ is an isolated point. 
\end{proof}

\begin{lemma}\label{not<}
For a frame $L$ and a family $P\se \pt(L)$ of primes in which every prime is essential for the meet $P$, if $p,q\in P$ are distinct they are incomparable. 
\end{lemma}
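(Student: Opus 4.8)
The plan is to argue directly from the definition of essentiality, by contradiction. Recall that a prime $p\in P$ being an essential element of the meet $P$ means precisely that $\bwe (P\sm\{p\})\nleq p$, since dropping $p$ must strictly increase the meet. The hypothesis of the lemma therefore supplies, for every $p\in P$, the inequality $\bwe (P\sm\{p\})\nleq p$; this is the only fact I intend to use.

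Suppose towards a contradiction that two distinct primes $p,q\in P$ are comparable; without loss of generality say $p\leq q$. I would then invoke the essentiality of $q$, which yields $\bwe (P\sm\{q\})\nleq q$. But since $p\neq q$, the prime $p$ belongs to the set $P\sm\{q\}$, and hence $\bwe (P\sm\{q\})\leq p$. Combining this with $p\leq q$ gives $\bwe (P\sm\{q\})\leq q$, directly contradicting the essentiality of $q$.

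Since the assumption of comparability leads to a contradiction, any two distinct members of $P$ must be incomparable, which is the claim. I do not anticipate any obstacle here: the argument is an immediate application of the characterization of an essential element of a meet, and it uses neither the primality of the elements of $P$ nor any property specific to frames beyond the existence of the meets $\bwe (P\sm\{q\})$. The entire content is the observation that a member below another member can be safely discarded from the meet, contradicting its being essential.
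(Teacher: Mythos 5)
Your proof is correct and is essentially identical to the paper's: both argue that if $p\leq q$ with $p\neq q$, then $p\in P\sm\{q\}$ forces $\bwe(P\sm\{q\})\leq p\leq q$, contradicting the essentiality of $q$ in the meet. No gaps, and no difference in approach worth noting.
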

\begin{proof}
Suppose that $L$ is a frame and that $P\se \pt(L)$ is such that in the meet $\bwe P$ all elements are essential. Suppose that we have $p,q\in P$ two distinct primes. Suppose that $p\leq q$. We deduce that $\bwe P{\sm}\{q\}=\bwe P{\sm}\{q\}\we p\leq q$. 
\end{proof}

\begin{proposition}\label{ddsimpliests}
For a frame $L$, if $L$ is spatial and every closed subspace of $\pt(L)$ has a discrete dense subspace, the frame is totally spatial.
\end{proposition}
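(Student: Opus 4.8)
The plan is to reduce to Lemma \ref{meetofess}: it suffices to prove that every $a\in L$ is the meet of its essential primes. The case $a=1$ is the empty meet, so I fix $a\neq 1$. Since $L$ is spatial, $a=\bwe\pt(\up a)$ and the closed subspace $\pt(\up a)$ is nonempty, so by hypothesis it has a discrete dense subspace $D$. I will establish two facts: that $\bwe D=a$, and that \emph{every} point of $D$ is an essential prime of $a$. Together with $a\leq \bwe\mf{Ess}(a)$ (all essential primes lie above $a$) these give $a=\bwe D\geq \bwe \mf{Ess}(a)\geq a$, hence $a=\bwe\mf{Ess}(a)$, which is what Lemma \ref{meetofess} requires.

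For the first fact, $D\se \pt(\up a)$ gives $a=\bwe\pt(\up a)\leq \bwe D$. Conversely, density says every prime $q\in\pt(\up a)$ lies in the closure of $D$ taken in $\pt(\up a)$; recalling that in $\pt(L)$ a prime $q$ lies in the closure of a set $P$ exactly when $\bwe P\leq q$, this yields $\bwe D\leq q$ for all $q\in \pt(\up a)$, and so $\bwe D\leq \bwe\pt(\up a)=a$.

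For the second fact, fix $p\in D$. Discreteness makes $\{p\}$ open in $D$, so there is an open $V=\{q\in\pt(\up a):x\nleq q\}$ of $\pt(\up a)$ with $V\cap D=\{p\}$, whence $x\nleq p$. The closure of $\{p\}$ in $\pt(\up a)$ is $\up p\cap\pt(\up a)$, so $\pt(\up a){\sm}\up p$ is open; therefore $V\cap(\pt(\up a){\sm}\up p)$ is an open set, and it is disjoint from $D$ (it excludes $p$ and contains no other point of $D$). By density this open set is empty, i.e.\ $V\se \up p$. Contrapositively, every $q\in\pt(\up a)$ with $p\nleq q$ satisfies $x\leq q$, so $x\leq \bwe(\pt(\up a){\sm}\up p)=p\ra a$ by Lemma \ref{essential1chara}. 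Since $x\nleq p$ this gives $p\ra a\nleq p$, which by Lemma \ref{essential1chara} says precisely that $p$ is an essential prime of $a$. Hence $D\se \mf{Ess}(a)$, and the reduction is complete.

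The main obstacle, and the genuine content of the argument, is the second step: upgrading ``isolated in the dense subspace $D$'' to ``essential prime of $a$''. Naively one only obtains, via Lemma \ref{firstprime}, that $p$ is an essential element of the meet $\bwe D$, which concerns the small index set $D$ rather than all of $\pt(\up a)$; and passing to the larger index set makes the relevant meet \emph{smaller}, so the separating witness $x$ does not obviously bound it. The maneuver that resolves this is to intersect the separating open $V$ with the open complement of $\overline{\{p\}}$ and invoke density to force $V\se\up p$; this is exactly what lets $x$ bound the full meet $\bwe(\pt(\up a){\sm}\up p)=p\ra a$ rather than merely $\bwe(D{\sm}\{p\})$.
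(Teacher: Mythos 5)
Your proof is correct, and while its skeleton matches the paper's (take the discrete dense subspace $D\se\pt(\up a)$, show $\bwe D=a$ by density, show its points are essential primes of $a$), the crucial step is handled by a genuinely different argument. The paper upgrades ``$p$ is isolated in $D$'' to ``$p$ is an essential prime of $a$'' algebraically: it invokes Lemma \ref{firstprime} to get $\bwe (D{\sm}\{p\})\nleq p$, and then proves the identity $\bwe (D{\sm}\{p\})=\bwe (\pt(\up a){\sm}\up p)$, using Lemma \ref{not<} (pairwise incomparability of the points of $D$) for one inequality and primality of the elements $q\in \pt(\up a){\sm}\up p$ for the other. You bypass Lemmas \ref{firstprime} and \ref{not<} entirely and instead run density a second time, against the open $V\cap(\pt(\up a){\sm}\up p)$, to force $V\se \up p$; this hands you the witness $x\leq p\ra a$ with $x\nleq p$, hence $p\ra a\nleq p$, which is essentiality by Lemma \ref{essential1chara}. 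A secondary difference is the reduction target: the paper needs only one essential prime per element and concludes via Corollary \ref{essentialimportant}, whereas you prove the stronger fact $D\se \mf{Ess}(a)$ together with $\bwe D=a$ and conclude via Lemma \ref{meetofess}; in doing so you essentially recover half of Lemma \ref{essentialimplidds} (that the discrete dense subspace consists of essential primes) as a by-product. What each approach buys: yours is more topological and self-contained at this point of the paper, trading the lattice manipulation for a second density argument; the paper's is more lattice-algebraic and isolates the reusable identity between the two meets, which is where incomparability and primality do their work.
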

\begin{proof}
It suffices to show that if $L$ is spatial and every closed subspace of $\pt(L)$ has a discrete dense subspace, every element $a\in L$ has an essential prime. Let us assume the antecedent. Suppose that we have an element $a\in L$. The closed subspace $\pt(\up a)$ has a discrete dense subspace, say this is $P$. First, we claim that $\bwe P\leq a$. By spatiality, it suffices to show that $q\leq \bwe P$ implies $q\leq a$ for every prime $q\in \pt(L)$. So, suppose that the antecedent holds. As $P\se \pt(\up a)$ is dense, and as the open $\{r\in \pt(\up a):q\nleq r\}$ is disjoint from $P$, we must have that $q\leq \bwe \pt(\up a)=a$. Indeed, then $\bwe A\leq a$. The space $P$ must be nonempty, so let $p\in P$. Since $P$ is discrete, $p\in P$ is an isolated point of this space, and so we have $\bwe P{\sm}\{p\}\nleq p$, by Lemma \ref{firstprime}. We show that $\bwe \pt(\up a){\sm}\up a$ by showing that $\bwe P{\sm}\{p\}=\bwe \pt(\up a){\sm}\up p$. By Lemma \ref{not<}, the elements in $P$ are all pairwise incomparable, and so we have that $P{\sm}\{p\}\se \pt(\up a){\sm}\up p$, and so one of the desired inequalities is proven. For the other, suppose that $q$ is a prime above $a$ with $p\nleq q$. We have that $\bwe P{\sm}\{p\}\we p=\bwe P\leq q$, and by primality of $q$ this implies that $\bwe {\sm}\{p\}\leq q$. Indeed, then,  $\bwe P{\sm}\{p\}\leq\bwe \pt(\up a){\sm}\up p$.
\end{proof}

Let us gather the main results obtained in this subsection.
\begin{theorem}\label{important2}
For a frame $L$, the following are equivalent.
\begin{enumerate}
    \item The frame $L$ is totally spatial.
    \item The frame $\SLop$ is spatial.
    \item The map $\pt:\SL\ra \sob [\ca{P}(\pt(L))]$ is an isomorphism.
    \item Every Boolean sublocale of $L$ other then $\{1\}$ has at least a point.
    \item The frame $L$ is spatial and every element of $L$ other than $1$ has an essential prime.   
    \item The frame $L$ is spatial and for every element $a\neq 1$ there is a prime of the form $y\ra a$ for some $y\in L$. 
    \item Every element $a\in L$ is the meet of its essential primes.
    \item The frame $L$ is spatial and every closed subspace of $\pt(L)$ has a discrete dense subspace.
\end{enumerate}
\end{theorem}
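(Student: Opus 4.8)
The plan is to take condition (1), total spatiality, as the hub and to show each of the remaining seven statements equivalent to it; almost every equivalence has already been isolated as a lemma in this subsection, so the work is mostly bookkeeping. The direct citations are: (1)$\Leftrightarrow$(2) is Corollary \ref{totspaassemblyspa}; (1)$\Leftrightarrow$(4) is Proposition \ref{enoughonly1point}; (1)$\Leftrightarrow$(5) is Corollary \ref{essentialimportant}; and (1)$\Leftrightarrow$(7) is Lemma \ref{meetofess}. These four require nothing beyond the cited statements.

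For the remaining three I would reason as follows. For (6), item (1) of Lemma \ref{booleanfacts} identifies $\bl(a)$ with $\{y\ra a:y\in L\}$, so ``there is a prime of the form $y\ra a$'' says exactly ``$\bl(a)$ contains a prime''; by Lemma \ref{essentialiffinboolean} this is the same as ``$a$ has an essential prime'', and hence (6) is a verbatim restatement of (5). For (8), I would combine the two halves already established: Lemma \ref{essentialimplidds} gives (1)$\Rightarrow$(8) and Proposition \ref{ddsimpliests} gives (8)$\Rightarrow$(1), the common spatiality hypothesis matching in both directions. For (3), I would use the commuting main diagram together with Proposition \ref{spaisosober}: since $\pt(S)=\pt(\spa(S))$ for every sublocale $S$, the map $\pt:\SL\ra\sob[\ca{P}(\pt(L))]$ factors as the conucleus $\spa:\SL\epi\spa[\SL]$ followed by the isomorphism $\pt:\spa[\SL]\ra\sob[\ca{P}(\pt(L))]$. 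Hence $\pt$ on all of $\SL$ is an isomorphism precisely when $\spa$ is, and since $\spa$ is an idempotent surjection onto the spatial sublocales, this happens exactly when $\SL=\spa[\SL]$, that is, when every sublocale is spatial, which is condition (1).

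I expect no substantive obstacle: the content lives entirely in the supporting lemmas, and the theorem only collects them. The single step deserving care is (3), where one must observe that the ``always an isomorphism'' top edge of Proposition \ref{spaisosober} does not by itself trivialize the equivalence, and that it is the factorization through $\spa$ that pins the isomorphism condition to total spatiality; the others are either bare citations or one-line reformulations.
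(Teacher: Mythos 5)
Your proposal is correct and follows essentially the same route as the paper: the same hub-and-spoke reduction to (1) via Corollary \ref{totspaassemblyspa}, Proposition \ref{enoughonly1point}, Corollary \ref{essentialimportant}, Lemma \ref{meetofess}, Lemmas \ref{essentialiffinboolean} and \ref{booleanfacts} for (5)$\Leftrightarrow$(6), and the Lemma \ref{essentialimplidds}/Proposition \ref{ddsimpliests} pair for (8). Your unpacking of (3) as the factorization of $\pt$ through the conucleus $\spa$ followed by the isomorphism of Proposition \ref{spaisosober} is exactly what the paper means by ``commutativity of the main diagram,'' just stated more explicitly.
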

\begin{proof}
The equivalence between (1) and (2) is given by Corollary \ref{totspaassemblyspa}. The equivalence between (1) and (3) follows from commutativity of the main diagram of this section. Items (1) and (4) are equivalent by Proposition \ref{enoughonly1point}. Items (1) and (5) are equivalent by Corollary \ref{essentialimportant}. Item (5) and (6) are equivalent because in a spatial frame all elements are the meet of the primes above it, and so by Lemma \ref{essentialiffinboolean}, for a spatial frame $L$ and an element $a\in L$ other than $1$, for a prime $p$ to be an essential prime of $a$ is equivalent to it being in $\bl(a)$, which is equivalent to it being of the form $y\ra a$ for some $y\in L$ by item (1) of Lemma \ref{booleanfacts}. Lemma \ref{meetofess} establishes that (1) and (7) are equivalent. Item (1) implies item (8) by Lemma \ref{essentialimplidds}, and item (8) implies item (1) by Proposition \ref{ddsimpliests}.
\end{proof}

\subsection{When do the sublocales perfectly represent the subspaces?}
This subsection is devoted to exploring the stronger property of the assembly of a spatial frame being Boolean. We give a new proof of the result established in \cite{simmons80} that these are exactly the scattered spaces. We will analyze scatteredness in terms of prime elements of frames of opens. A space is called \textit{scattered} if all its nonvoid subspaces contain an isolated point. The next results are aimed at characterizing scattered spaces in terms of prime elements. Recall that we say that an element $m\in M$ is an essential element of the meet $\bwe M$ if $\bwe M\neq \bwe (M{\sm}\{m\})$, and that this is equivalent to $\bwe (M{\sm}\{m\})\nleq m$.

\begin{lemma}\label{otherscatter}
A space $X$ is scattered if and only if every subspace of $X$ has a discrete dense subspace. 
\end{lemma}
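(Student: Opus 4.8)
The plan is to prove the two implications separately, the forward direction being routine and the reverse carrying the real content.

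For the direction ``scattered $\Rightarrow$ every subspace has a discrete dense subspace'', I would first record that scatteredness is hereditary: any subspace of a subspace $Y\se X$ is again a subspace of $X$, so if $X$ is scattered then so is every $Y\se X$. It then suffices to show that a scattered space $Y$ has the set $\mathrm{Iso}(Y)$ of its isolated points as a discrete dense subspace. Discreteness is immediate, since for $y\in \mathrm{Iso}(Y)$ the singleton $\{y\}$ is open in $Y$, hence open in $\mathrm{Iso}(Y)$. For density I would take any nonempty open $U\se Y$; as a nonempty subspace of the scattered space $Y$ it has a point isolated in $U$, and since $U$ is open this point is isolated in $Y$, so it lies in $U\cap \mathrm{Iso}(Y)$. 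Thus $\mathrm{Iso}(Y)$ meets every nonempty open set, hence is dense, and it witnesses the claim for $Y$.

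For the converse, suppose every subspace of $X$ has a discrete dense subspace, and let $Z\se X$ be nonempty; I must produce a point isolated in $Z$. By hypothesis $Z$ has a discrete dense subspace $D$, which is nonempty because $Z$ is. Choosing $d\in D$ and an open $U\se Z$ with $U\cap D=\{d\}$, the density of $D$ in $Z$ gives that $U\cap D=\{d\}$ is dense in the open set $U$, i.e.\ that $\{d\}$ is dense in $U$ (every nonempty open subset of $U$ meets $D$, hence equals a set containing $d$). The remaining, and crucial, step is to upgrade this to the statement that $\{d\}$ is actually open in $Z$, so that $d$ is a genuine isolated point of $Z$.

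This last step is where I expect the \emph{main obstacle} to lie: a point of a dense discrete subspace need not be isolated in the ambient space when no separation is assumed, so the density of $\{d\}$ in $U$ has to be converted into $U=\{d\}$. I would carry this out using the separation available for the spaces in play: the spectra $\pt(L)$ and their subspaces are $T_0$, and in the situations where the lemma is applied they are $T_D$. In the $T_D$ case I can, after replacing $U$ by $U\cap W$ for a $T_D$-neighbourhood $W$ of $d$ (so that $(U\cap W)\sm\{d\}=U\cap(W\sm\{d\})$ is open), observe that $(U\cap W)\sm\{d\}$ is an open subset of $U\cap W$ missing $d$; by the density of $\{d\}$ in $U\cap W$ it must be empty, forcing $U\cap W=\{d\}$ and hence $d$ isolated in $Z$. (In the $T_1$ case this is even more direct, since $\overline{\{d\}}=\{d\}$ collapses $U$ to $\{d\}$ at once.) Assembling the two directions then yields the stated equivalence.
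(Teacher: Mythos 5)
Your forward direction is correct and is essentially the paper's own argument (the paper chooses one isolated point of $U\cap Y$ for each open $U$ meeting $Y$; you take the set of all isolated points of $Y$ — same idea). You have also correctly located the delicate point of the converse, which the paper's one-line proof in fact glosses over: without separation, a point isolated in a dense subspace need not be isolated in the ambient space. For instance $Y=\mathbb{N}\cup\{d\}$ with opens $\emptyset$ and $\{d\}\cup A$ ($A$ cofinite in $\mathbb{N}$) has the discrete dense subspace $\{d\}$ but no isolated point; the indiscrete doubleton even satisfies ``every subspace has a discrete dense subspace'' while failing to be scattered, so some $T_0$-type hypothesis (implicit in the paper, which only applies the lemma to $\pt(L)$) is indispensable.

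However, your repair has a genuine gap: you \emph{assume} $T_D$, and $T_D$ is neither part of the statement nor available where the lemma is used. In the paper's final theorem the lemma must deliver the implication ``every subspace of $\pt(L)$ has a discrete dense subspace $\Rightarrow$ $\pt(L)$ is scattered,'' and there $T_D$ is one of the conditions being \emph{derived} (by Theorem \ref{important1} it is equivalent to weak coveredness of all primes, hence part of the conclusion); assuming it begs the question. The missing step is to deduce $T_D$ from the hypothesis itself, applied to subspaces other than the one under consideration — this is what both your proof and the paper's omit. Concretely: given $x\in X$, put $Y=\overline{\{x\}}$ and suppose $x$ is not isolated in $Y$; then $Y\setminus\{x\}$ is nonempty and dense in $Y$, so by hypothesis it has a discrete dense subspace $E$, which is then also a discrete dense subspace of $Y$ with $x\notin E$. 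But every discrete dense subspace $F$ of $Y$ equals $\{x\}$: for $f\in F$ pick $V$ open with $V\cap Y\cap F=\{f\}$; then $\{f\}$ and $\{x\}$ are both dense in the nonempty open subspace $V\cap Y$, and in a $T_0$ space two points dense in a common nonempty open set must coincide, so $f=x$. This contradiction shows $x$ is isolated in $\overline{\{x\}}$, i.e.\ $X$ is $T_D$; after that, your argument (intersecting with a $T_D$-neighbourhood $W$ of $d$ so that $(U\cap W)\setminus\{d\}$ is open, hence empty by density of $\{d\}$) finishes the converse exactly as you wrote it.
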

\begin{proof}
Suppose that the space $X$ is scattered. Let $Y\se X$ be a subspace. Let $\{U_i:i\in I\}$ be the collection of opens of $X$ such that they intersect $Y$. Let $x_i$ be the isolated point of $U_i\cap Y$. The space $\{x_i:i\in I\}$ is a discrete dense subspace of $Y$: on the one hand it intersects every $U_i$, and on the other hand it is also discrete, since every point is isolated. Conversely, if every subspace of $X$ has a discrete dense subspace, so do all nonempty subspaces. By discreteness, for any nonempty subspace $Y$ its discrete dense subspace must be nonempty, as it intersects $Y$ itself, and so $Y$ must have at least one isolated point.
\end{proof}

\begin{lemma}\label{primes}
For every frame $L$ the space $\pt(L)$ is scattered if and only if for every collection $P\se \pt(L)$ the meet $\bwe P$ has an essential prime.
\end{lemma}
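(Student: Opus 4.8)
The plan is to reduce the statement directly to Lemma~\ref{firstprime}, which already bridges the topological notion of isolated point and the order-theoretic notion of essential element of a meet. The starting observation is that the points of $\pt(L)$ are precisely the primes of $L$, so every subspace of $\pt(L)$ is of the form $P$ for some collection $P\se \pt(L)$, carrying the subspace topology whose opens are the sets $\{q\in P:a\nleq q\}$ for $a\in L$. Thus the defining condition for scatteredness---that every nonvoid subspace has an isolated point---becomes the condition that for every nonempty $P\se \pt(L)$ there is some $p\in P$ isolated in $P$.

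Next I would invoke Lemma~\ref{firstprime}, which states that $p\in P$ is isolated in $P$ exactly when $p$ is an essential element of the meet $\bwe P$, i.e. $\bwe (P{\sm}\{p\})\nleq p$. Since any such $p$ lies in $P\se \pt(L)$ and is hence prime, the assertion ``$P$ has an isolated point'' is literally the same as the assertion ``the meet $\bwe P$ has an essential prime''. This single equivalence delivers both implications at once, with no further computation inside $\pt(L)$.

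Concretely, for the forward direction I would take an arbitrary nonempty $P\se \pt(L)$, regard it as a nonvoid subspace of the scattered space $\pt(L)$, extract an isolated point $p\in P$, and read off via Lemma~\ref{firstprime} that $p$ is an essential prime of $\bwe P$. For the converse I would take any nonvoid subspace $Y\se \pt(L)$, write it as a collection $P$ of primes, apply the hypothesis to obtain a prime $p\in P$ essential in $\bwe P$, and conclude again by Lemma~\ref{firstprime} that $p$ is isolated in $Y$; as $Y$ was an arbitrary nonvoid subspace, $\pt(L)$ is scattered.

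The only point requiring care---and the closest thing to an obstacle---is the bookkeeping about the quantifier and the precise reading of ``$\bwe P$ has an essential prime''. This must be understood as the existence of a prime $p\in P$ that is an essential element of the meet $\bwe P$ in the sense recalled just before the statement (not the more restrictive ``essential prime of $a$'' notion, which presupposes $a$ to be the meet of all primes above it); and the quantification should be read over nonvoid subspaces, i.e. nonempty $P$, so as to match the definition of scattered and avoid the degenerate case $P=\emptyset$, $\bwe P=1$. With these conventions fixed, the proof is a direct translation through Lemma~\ref{firstprime}.
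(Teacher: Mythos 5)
Your proof is correct and is exactly the paper's argument: the paper proves this lemma by simply citing Lemma~\ref{firstprime}, and your proposal spells out the same translation (subspaces of $\pt(L)$ are collections of primes, isolated points correspond to essential elements of the meet). Your attention to the nonempty-$P$ reading and to which notion of ``essential'' is in play is sound bookkeeping that the paper leaves implicit.
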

\begin{proof}
This follows from Lemma \ref{firstprime}.
\end{proof}

A frame is totally spatial if and only if $L$ is both spatial and such that every element $a=\bwe \pt(\up a)$ has a prime $p$ which is essential in the meet $\bwe \{q\in \pt(\up a):p\nleq q\mb{ or }p=q\}$. We now introduce a notion stronger than essentiality. For a frame $L$ and an element $a\in L$ which is the meet of the primes above it, we say that $p$ is an \tc{absolutely essential} prime of $a$ if and only if both $a\leq p$ and whenever $a=\bwe P$ for a collection of primes we have $p\in P$. We chose to call this property ``absolute" essentiality to express the contrast with classical essentiality, in which is relative to a particular way of writing an element as a meet of primes. Let us see various ways of characterizing this notion.

\begin{lemma}\label{particularmeet8}
Suppose that $L$ is a frame and $a\in L$ is a meet of primes. The following are equivalent for a prime $p\in \pt(\up a)$.
\begin{enumerate}
    \item The prime $p$ is an absolutely essential prime of $a$.
    \item The prime $p$ is an essential prime of the meet $\bwe \pt(\up a)$.
    \item The prime $p$ is an essential prime of $a$ and it is weakly covered.
\end{enumerate}
\end{lemma}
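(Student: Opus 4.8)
The plan is to prove the two equivalences $(1)\Leftrightarrow(2)$ and $(2)\Leftrightarrow(3)$ separately, using throughout the characterizations recorded before the statement: since $a=\bwe\pt(\up a)$, the prime $p\in\pt(\up a)$ is an essential prime of $a$ exactly when $\bwe(\pt(\up a)\sm\up p)\nleq p$, whereas $p$ is an essential element of the meet $\bwe\pt(\up a)$ exactly when $\bwe(\pt(\up a)\sm\{p\})\nleq p$. The first observation I would make is that any collection of primes $P\se\pt(L)$ with $\bwe P=a$ automatically satisfies $P\se\pt(\up a)$, because $a=\bwe P\le q$ for every $q\in P$; so absolute essentiality only ever tests representations $P\se\pt(\up a)$. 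The second, purely combinatorial observation is the disjoint decomposition $\pt(\up a)\sm\{p\}=(\pt(\up p)\sm\{p\})\cup(\pt(\up a)\sm\up p)$, which is what lets me compare the two flavours of essentiality. I write $N_1=\pt(\up p)\sm\{p\}$ and $N_2=\pt(\up a)\sm\up p$, so that $\bwe(\pt(\up a)\sm\{p\})=\bwe N_1\we\bwe N_2$.

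For $(1)\Rightarrow(2)$ I would argue contrapositively: if $p$ fails to be essential in $\bwe\pt(\up a)$, then $\bwe(\pt(\up a)\sm\{p\})\le p$, whence $a=\bwe\pt(\up a)=\bwe(\pt(\up a)\sm\{p\})\we p=\bwe(\pt(\up a)\sm\{p\})$, exhibiting $a$ as a meet of primes not containing $p$ and contradicting absolute essentiality. For $(2)\Rightarrow(1)$, given any representation $a=\bwe P$ with $p\notin P$, the inclusion $P\se\pt(\up a)\sm\{p\}$ yields $a=\bwe P\ge\bwe(\pt(\up a)\sm\{p\})\ge a$, so $\bwe(\pt(\up a)\sm\{p\})=a\le p$, again contradicting essentiality of $p$ in $\bwe\pt(\up a)$. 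This gives $(1)\Leftrightarrow(2)$.

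The equivalence $(2)\Leftrightarrow(3)$ is the substantial part. The direction $(2)\Rightarrow(3)$ is comparatively routine: from $\pt(\up a)\sm\up p\se\pt(\up a)\sm\{p\}$ we get $\bwe(\pt(\up a)\sm\up p)\ge\bwe(\pt(\up a)\sm\{p\})\nleq p$, so $p$ is an essential prime of $a$; and if $p=\bwe Q$ with $p\notin Q$ then $Q\se\pt(\up p)\sm\{p\}\se\pt(\up a)\sm\{p\}$ forces $\bwe(\pt(\up a)\sm\{p\})\le p$, contradicting $(2)$, so $p$ is weakly covered. The heart of the argument is $(3)\Rightarrow(2)$. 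Here I would first use weak coveredness to show $\bwe N_1\nleq p$: every prime in $N_1$ lies above $p$, so $\bwe N_1\ge p$, and were $\bwe N_1=p$ then $N_1$ would be a family of primes meeting to $p$ yet omitting $p$, violating weak coveredness (the case $N_1=\emptyset$ gives $\bwe N_1=1\neq p$ directly); hence $p<\bwe N_1$, and in particular $\bwe N_1\nleq p$. Essentiality of $p$ as a prime of $a$ gives $\bwe N_2\nleq p$. Since $p$ is prime, $\bwe N_1\nleq p$ and $\bwe N_2\nleq p$ together force $\bwe N_1\we\bwe N_2\nleq p$, i.e.\ $\bwe(\pt(\up a)\sm\{p\})\nleq p$, which is $(2)$.

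The main obstacle I anticipate is precisely this last step: the two notions of essentiality differ only in whether one removes $\{p\}$ or the whole principal filter $\up p$, and bridging them requires both the decomposition of $\pt(\up a)\sm\{p\}$ and an application of primality of $p$ to a binary meet. The role of weak coveredness is exactly to guarantee that the extra primes strictly above $p$ (those in $N_1$) do not pull the meet down to or below $p$; without it one controls only $N_2$, and the conjunction $\bwe N_1\we\bwe N_2$ could slip below $p$. The small points needing care are the edge case $N_1=\emptyset$ and the fact that $\bwe N_1\ge p$ together with $\bwe N_1\neq p$ genuinely yields $\bwe N_1\nleq p$ (rather than the vacuous $\bwe N_1\le p$).
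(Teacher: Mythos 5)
Your proof is correct and follows essentially the same route as the paper's: both hinge on the decomposition $\pt(\up a)\sm\{p\}=(\pt(\up p)\sm\{p\})\cup(\pt(\up a)\sm\up p)$, on weak coveredness forcing $\bwe(\pt(\up p)\sm\{p\})\nleq p$, and on primality of $p$ applied to the resulting binary meet. The only difference is organizational — the paper proves the cycle $(1)\Rightarrow(2)\Rightarrow(3)\Rightarrow(1)$ while you route both equivalences through $(2)$ — but the mathematical content is identical.
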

\begin{proof}
It is clear that (1) implies (2). Suppose, now, that (2) holds. A fortiori, we must have that $\bwe \pt(\up a){\sm}\up p\nleq a$, and so the prime $p$ is essential for $a$. Now suppose towards contradiction that $p$ is not weakly covered. We then have that $\bwe \{q\in \pt(\up a): p\neq q\}=\bwe \{q\in \pt(\up a):p<q\}\we \bwe \{q\in \pt(\up a):p\nleq q\}=p\we \bwe \pt(\up a){\sm}\up p\leq a$. We have contradicted (2). Suppose that $p$ is an essential prime of $a$ and that it is weakly covered. Let $P\se \pt(L)$ be a family of primes with $\bwe P=a$. Then, $P{\sm}\{p\}\se \pt(\up a){\sm}\{p\}$ and so $\bwe \pt(\up a){\sm}\{p\}\leq \bwe P{\sm}\{p\}$. Since $p$ is weakly covered we have $\bwe \pt(\up p){\sm}\{p\}\nleq p$, and since it is an essential prime of $a$ also $\bwe \pt(\up a){\sm}\up p\nleq p$. Combining these two inequalities, we deduce by primality of $p$ that $\bwe \pt(\up p){\sm}\{p\}\we \bwe \pt(\up a){\sm}\up p=\bwe \pt(\up a){\sm}\{p\}\nleq p$. We deduce that $\bwe P{\sm}\{p\}\nleq p$. If we had $p\notin P$, we would have that $\bwe P{\sm}\{p\}=\bwe P\leq a\leq p$, contradicting our last deduction. Then, $p\in P$.
\end{proof}
\begin{corollary}\label{essabscov}
For a spatial frame $L$ we have that every element other then $1$ has an absolutely essential prime if and only if every element other than $1$ has an essential prime and every prime is weakly covered.
\end{corollary}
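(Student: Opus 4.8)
The plan is to derive everything from Lemma \ref{particularmeet8}, which (for any element that is a meet of primes) equates a prime $p$ being absolutely essential for $a$ with $p$ being simultaneously an essential prime of $a$ and weakly covered. Since $L$ is spatial, every element of $L$ is the meet of the primes above it, so Lemma \ref{particularmeet8} applies to every $a \in L$; this discharges the standing hypothesis ``meet of primes'' and lets me argue uniformly about arbitrary elements.

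For the backward implication I would argue directly. Assuming every element other than $1$ has an essential prime and every prime is weakly covered, take any $a \neq 1$ and let $p$ be an essential prime of $a$. Since every prime is weakly covered, $p$ in particular is weakly covered, so $p$ satisfies condition (3) of Lemma \ref{particularmeet8}; by the equivalence $(3)\Rightarrow(1)$ the prime $p$ is absolutely essential for $a$. This is the routine direction.

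For the forward implication, assume every element other than $1$ has an absolutely essential prime. The essential-prime half is immediate: any absolutely essential prime of $a \neq 1$ is, by $(1)\Rightarrow(3)$ of Lemma \ref{particularmeet8}, an essential prime of $a$. The delicate half is producing weak coveredness for \emph{every} prime, and this is the step I expect to be the main obstacle. The key observation is to feed the hypothesis the prime itself: given $q \in \pt(L)$, note $q \neq 1$ and that $q = \bwe \{q\}$ exhibits $q$ as a meet of primes, so by hypothesis $q$ has an absolutely essential prime $p$. The defining property of absolute essentiality, applied to the trivial representation $q = \bwe\{q\}$, forces $p \in \{q\}$, i.e. $p = q$; thus $q$ is its own absolutely essential prime. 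Applying $(1)\Rightarrow(3)$ of Lemma \ref{particularmeet8} once more yields that $q$ is weakly covered, and since $q$ was arbitrary this establishes that every prime is weakly covered, completing the forward direction.
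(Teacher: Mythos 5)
Your proposal is correct and follows essentially the same route as the paper: the backward direction is a direct application of Lemma \ref{particularmeet8}, and the forward direction hinges on the same key trick of feeding a prime $q$ its own trivial representation $q=\bigwedge\{q\}$ to conclude that $q$ is its own absolutely essential prime. The only cosmetic difference is that at the last step the paper reads weak coveredness straight off the definition of absolute essentiality, while you route it through the $(1)\Rightarrow(3)$ implication of Lemma \ref{particularmeet8}; both are valid.
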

\begin{proof}
It follows directly from Lemma \ref{particularmeet8} that if all elements other than $1$ have an essential prime and every prime is weakly covered then every element has an absolutely essential prime. For the converse, suppose that $L$ is spatial, and that every element other than $1$ has an absolutely essential prime. It is clear that every element has an essential prime. To see that every prime is covered, suppose that we have $P\se \pt(L)$ with $p=\bwe P$ a prime element. We claim that $p$ is an absolutely essential prime of itself: by assumption $p$ has at least one absolutely essential prime, and this prime must be contained in $\{p\}$ as $\bwe \{p\}=p$. As $p$ is an absolutely essential prime of itself, we must have $p\in P$, and so $p$ is weakly covered.
\end{proof}

\begin{proposition}\label{everyhasabsolutely}
For any frame $L$, the space $\pt(L)$ is scattered if and only if every element of $L$ other than $1$ which is a meet of primes has an absolutely essential prime.
\end{proposition}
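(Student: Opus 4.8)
The plan is to reduce both implications to two results already proved: Lemma~\ref{primes}, which says $\pt(L)$ is scattered exactly when every nonempty $P\se\pt(L)$ admits an essential element of the meet $\bwe P$, and the equivalences of Lemma~\ref{particularmeet8}, which identify absolute essentiality of a prime $p$ for $a$ with $p$ being an essential element of the \emph{full} meet $\bwe\pt(\up a)$. With these in hand the proposition is almost a matching of definitions, the only work being a monotonicity argument to move essentiality between index sets.

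For the forward direction I would start from a scattered $\pt(L)$ and an element $a\neq 1$ that is a meet of primes, so that $a=\bwe\pt(\up a)$. Since an empty meet would force $a=1$, the collection $\pt(\up a)$ is nonempty. Applying Lemma~\ref{primes} to $P=\pt(\up a)$ yields a prime $p\in\pt(\up a)$ that is essential in $\bwe\pt(\up a)$, that is, $\bwe(\pt(\up a)\sm\{p\})\nleq p$. This is precisely condition~(2) of Lemma~\ref{particularmeet8}, so the implication $(2)\Rightarrow(1)$ there makes $p$ an absolutely essential prime of $a$.

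For the converse I would take an arbitrary nonempty $P\se\pt(L)$ and set $a=\bwe P$. Picking any $q\in P$, primality gives $q\neq 1$ and $a\leq q$, so $a\neq 1$; moreover $a$ is manifestly a meet of primes, so by hypothesis $a$ has an absolutely essential prime $p$. Two consequences follow: absolute essentiality applied to the representation $a=\bwe P$ forces $p\in P$, while $(1)\Rightarrow(2)$ of Lemma~\ref{particularmeet8} gives $\bwe(\pt(\up a)\sm\{p\})\nleq p$. The one genuinely computational step is then to transfer this essentiality from the large index set $\pt(\up a)$ down to $P$: every $q\in P$ satisfies $a\leq q$, so $P\se\pt(\up a)$, whence $P\sm\{p\}\se\pt(\up a)\sm\{p\}$ and therefore $\bwe(\pt(\up a)\sm\{p\})\leq\bwe(P\sm\{p\})$; combined with $\bwe(\pt(\up a)\sm\{p\})\nleq p$ this yields $\bwe(P\sm\{p\})\nleq p$, so $p$ is essential in $\bwe P$. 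As $P$ was an arbitrary nonempty collection, Lemma~\ref{primes} delivers scatteredness.

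The point to watch, and the reason Lemma~\ref{particularmeet8} carries the load, is the gap between $p$ being essential for $a$ in the relative sense (using only primes incomparable to $p$) and $p$ being essential in the full meet $\bwe\pt(\up a)$: scatteredness naturally produces the stronger, full-meet form, which is exactly what absolute essentiality encodes. Monotonicity of $\bwe$ under set inclusion is what allows the essentiality witnessed on $\pt(\up a)$ to descend to any $P$ with $\bwe P=a$ in the converse, while in the forward direction no descent is needed since we work directly with the maximal index set $\pt(\up a)$.
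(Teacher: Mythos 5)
Your proof is correct and takes essentially the same route as the paper: both directions reduce the statement to Lemma~\ref{primes} together with the equivalences of Lemma~\ref{particularmeet8}. Your converse fills in, via the $(1)\Rightarrow(2)$ implication and monotonicity of meets over $P\se\pt(\up a)$, details the paper leaves implicit (the paper instead gets essentiality of $p$ in $\bwe P$ directly from the definition of absolute essentiality, since otherwise $P\sm\{p\}$ would be a prime representation of $a$ omitting $p$), but the structure is identical.
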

\begin{proof}
Suppose that $L$ is a frame in which every element of $L$ which is a nonempty meet of primes has an absolutely essential prime. For a nonempty set $P\se \pt(L)$ we have that the element $\bwe P$, then, has an absolutely essential prime. By Lemma \ref{primes}, then, the space $P$ has an isolated point, and so the space $\pt(L)$ is scattered. Conversely, suppose that $L$ is a frame such that $\pt(L)$ is scattered. Suppose also that $a\in L$ is an element other than $1$ which is the meet of the primes above it. Consider the space $\pt(\up a)$. This must have an isolated point, say $p$, and by Lemma \ref{primes} this means that the prime $p$ is essential for the meet $\bwe \pt(\up a)$. By Lemma \ref{particularmeet8}, this means that $p$ is an absolutely essential prime of $a$.
\end{proof}

\begin{proposition}\label{allweaklytotspatial}
For any frame $L$ we have that it is spatial and $\pt(L)$ is scattered if and only if it is totally spatial and all its primes are weakly covered.
\end{proposition}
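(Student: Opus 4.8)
The plan is to observe that this proposition is essentially a repackaging of three results already established, so the strategy is to reduce both directions to a single chain of equivalences that holds under the standing assumption of spatiality. First I would note that spatiality is common to both sides of the biconditional: the left-hand side assumes it outright, and on the right-hand side total spatiality implies spatiality by definition (or by Theorem \ref{important2}). So throughout I may assume $L$ is spatial, which by the lemma characterizing spatial frames means every element of $L$ is a meet of the primes above it. This is the key simplification, because it lets me drop the qualifier ``which is a meet of primes'' appearing in Proposition \ref{everyhasabsolutely}: for a spatial $L$, every element other than $1$ automatically satisfies that hypothesis.

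With spatiality in hand, I would assemble the equivalences as follows. By Proposition \ref{everyhasabsolutely}, $\pt(L)$ being scattered is equivalent to every element of $L$ other than $1$ having an absolutely essential prime. By Corollary \ref{essabscov}, for a spatial frame this latter condition is in turn equivalent to the conjunction of (i) every element other than $1$ having an essential prime, and (ii) every prime being weakly covered. Finally, by the equivalence of items (1) and (5) in Theorem \ref{important2}, condition (i) together with spatiality is exactly total spatiality. Stringing these together gives that, for a spatial $L$, the scatteredness of $\pt(L)$ is equivalent to $L$ being totally spatial together with all primes being weakly covered, which is precisely the claim.

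Concretely, for the forward direction I would start from ``$L$ spatial and $\pt(L)$ scattered'', apply Proposition \ref{everyhasabsolutely} to get absolutely essential primes for every element $\neq 1$, then apply Corollary \ref{essabscov} to extract both the essential-prime condition and weak coveredness, and finally invoke Theorem \ref{important2}(5) to conclude total spatiality. For the converse I would run the same chain in reverse: total spatiality yields spatiality and, via Theorem \ref{important2}(5), the essential-prime condition; combining this with weak coveredness and Corollary \ref{essabscov} gives absolutely essential primes everywhere; then Proposition \ref{everyhasabsolutely} delivers scatteredness.

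I do not expect a genuine obstacle here, since no new combinatorial argument about primes is needed. The only point requiring care is the bookkeeping of the ``meet of primes'' hypothesis: Proposition \ref{everyhasabsolutely} and Corollary \ref{essabscov} are each stated with slightly different standing assumptions, so the argument must keep spatiality in force at every step to ensure the quantifier ranges match and that Corollary \ref{essabscov}, which explicitly presupposes a spatial frame, is applicable. Verifying that these hypotheses align is the whole of the work.
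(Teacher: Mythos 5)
Your proposal is correct and follows essentially the same route as the paper's own proof: both directions chain Proposition \ref{everyhasabsolutely}, Corollary \ref{essabscov}, and the equivalence of items (1) and (5) in Theorem \ref{important2}, in exactly the same order. Your explicit remark that spatiality lets one drop the ``meet of primes'' hypothesis in Proposition \ref{everyhasabsolutely} is a point the paper leaves implicit, but it is the same argument.
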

\begin{proof}
Suppose that a frame $L$ is spatial and that $\pt(L)$ is scattered. Then, by Proposition \ref{everyhasabsolutely}, every element of $L$ other than $1$ has an absolutely essential prime. By Corollary \ref{essabscov}, then, every element other than $1$ has an essential prime and every prime of $L$ is weakly covered. As $L$ is a spatial frame in which every element other than $1$ has an essential prime, it is totally spatial, by Theorem \ref{important2}. Conversely, suppose that $L$ is totally spatial and that all primes of $L$ are weakly covered. By Theorem \ref{important2}, the frame $L$ is spatial, and all elements of $L$ other then $1$ have an essential prime. Then, by Corollary \ref{essabscov}, every element other than $1$ has an absolutely essential prime. By Proposition \ref{everyhasabsolutely}, the space $\pt(L)$ is scattered.
\end{proof}

There is another way in which we may see that a spectrum $\pt(L)$ being scattered is a condition stronger then $L$ being totally spatial. Recall that for a frame $L$ and an element $a\in L$ which is a meet of primes a prime $p$ is essential if and only if it is an isolated point of $\pt(\up a){\sm}\up p$. We have then deduced that a frame $L$ is totally spatial if and only if every closed subspace of $\pt(L)$ has an isolated point. Strengthening this condition to $p$ being an absolutely essential prime of $a$ takes us to the following line of reasoning. 

\begin{lemma}\label{absisolated}
For a frame $L$ and an element $a\in L$, if $a$ is a meet of primes we have that $p$ is an absolutely essential prime of $a$ if and only if it is an isolated point of $\pt(\up a)$.
\end{lemma}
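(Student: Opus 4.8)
The plan is to obtain this as an immediate consequence of the two characterizations already in hand, Lemma \ref{firstprime} and Lemma \ref{particularmeet8}, by observing that both of them are really statements about essentiality in the \emph{same} full meet $\bwe \pt(\up a)$. No new computation should be needed; the work is entirely in lining up the two earlier biconditionals.

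First I would instantiate Lemma \ref{firstprime} at the particular subspace $P=\pt(\up a)\se \pt(L)$. Since $a$ is assumed to be a meet of primes we have $a=\bwe \pt(\up a)$, so that lemma says directly that a prime $p\in \pt(\up a)$ is isolated in the subspace $\pt(\up a)$ if and only if $p$ is an essential element of the meet $\bwe \pt(\up a)$, i.e.\ $\bwe(\pt(\up a)\sm \{p\})\nleq p$. Next I would invoke the equivalence of items (1) and (2) of Lemma \ref{particularmeet8}, which asserts precisely that $p$ is an absolutely essential prime of $a$ exactly when $p$ is essential in the meet $\bwe \pt(\up a)$. Chaining the two biconditionals then yields the claim. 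In each direction the hypothesis $p\in \pt(\up a)$ (equivalently $a\leq p$) is available: it is forced by isolation in $\pt(\up a)$ on one side, and it is the standing assumption of Lemma \ref{particularmeet8} on the other, so the membership conditions required by both lemmas are met.

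The only point that genuinely needs checking — and the sole place where the argument could slip — is that the notion ``essential prime of the meet $\bwe \pt(\up a)$'' appearing in item (2) of Lemma \ref{particularmeet8} coincides with the notion of \emph{essential element} used in Lemma \ref{firstprime}, namely that deleting $p$ strictly raises the meet (equivalently $\bwe(\pt(\up a)\sm \{p\})\nleq p$). Once this identification is confirmed, the two equivalences compose to give exactly the stated biconditional, so the proof reduces to a one-line citation of Lemmas \ref{firstprime} and \ref{particularmeet8}.
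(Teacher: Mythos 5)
Your proposal is correct and is exactly the paper's own proof: the paper likewise derives the lemma by combining the equivalence of items (1) and (2) of Lemma \ref{particularmeet8} with Lemma \ref{firstprime} instantiated at $P=\pt(\up a)$. The terminological identification you flag (essential prime of the meet $\bwe\pt(\up a)$ versus essential element of that meet) does hold by the paper's definitions, so nothing further is needed.
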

\begin{proof}
This follows from the equivalence of the first two items in Lemma \ref{particularmeet8}, and by Lemma \ref{firstprime}.
\end{proof}

For a frame $L$, we refer as $\mf{AbsEss}(a)$ to the collection of all absolutely essential primes for each element $a\in L$.  

\begin{lemma}\label{ifddsthenabsess}
For a frame $L$ and an element $a\in L$, is $\pt(\up a)$ has a discrete dense subspace then it is $\mf{AbsEss}(a)$.
\end{lemma}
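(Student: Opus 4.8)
The plan is to show that the discrete dense subspace $D$ of $\pt(\up a)$ must coincide with the set of isolated points of $\pt(\up a)$, which by Lemma \ref{absisolated} is exactly $\mf{AbsEss}(a)$. Since $\mf{AbsEss}(a)$ is only defined when $a$ is the meet of the primes above it, I would read the statement under that standing hypothesis, so that $a=\bwe \pt(\up a)$. The first observation is that density of $D$ pins down the meet: in $\pt(L)$ the closure of a subset $Y$ is $\up(\bwe Y)\cap \pt(L)$, so in the subspace $\pt(\up a)$ we have $\overline{D}=\up(\bwe D)\cap \pt(\up a)$; the hypothesis $\overline{D}=\pt(\up a)$ then gives $\bwe D\leq q$ for every $q\in \pt(\up a)$, and together with $D\se \pt(\up a)$ this yields $\bwe D=\bwe \pt(\up a)=a$.

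For the inclusion $\mf{AbsEss}(a)\se D$, by Lemma \ref{absisolated} every $p\in \mf{AbsEss}(a)$ is an isolated point of $\pt(\up a)$, so $\{p\}$ is open there; as $D$ is dense it meets every nonempty open, whence $p\in D$. This direction is immediate and uses nothing beyond density.

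For the reverse inclusion $D\se \mf{AbsEss}(a)$, let $p\in D$. Since $D$ is discrete, $p$ is isolated in $D$, so by Lemma \ref{firstprime} applied to the family $D$ the prime $p$ is essential in the meet $\bwe D=a$. By Lemma \ref{absisolated} it now suffices to upgrade this to $p$ being isolated in all of $\pt(\up a)$, i.e. essential in $\bwe \pt(\up a)$. The natural argument takes an open $U$ with $U\cap D=\{p\}$ and observes that $U\sm\{p\}$ is then open and disjoint from the dense set $D$, forcing $U\sm\{p\}=\emptyset$ and hence $\{p\}=U$ open. I expect the main obstacle to be precisely this last step: $U\sm\{p\}$ is open only when $\{p\}$ is closed in $\pt(\up a)$, so one must first verify that each point of a discrete dense subspace is a closed point of $\pt(\up a)$ (equivalently, that the relevant primes are weakly covered, which via Lemma \ref{particularmeet8} is exactly what separates "essential in $\bwe D$" from "essential in $\bwe \pt(\up a)$"). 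Without that, isolation in $D$ does not transfer to isolation in $\pt(\up a)$, and it is at this point that the structural hypotheses on the ambient spectrum must be brought in; checking that every $p\in D$ is a closed point is therefore the crux of the proof.
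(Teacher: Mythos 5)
Your reduction is faithful to what the lemma requires, and two thirds of it is solid: density does pin down $\bwe D=a$ (under the standing hypothesis $a=\bwe \pt(\up a)$), and the inclusion $\mf{AbsEss}(a)\se D$ is immediate, since by Lemma \ref{absisolated} every absolutely essential prime of $a$ is an isolated point of $\pt(\up a)$, whose open singleton must meet the dense set $D$. But the gap you flag in the reverse inclusion is not merely the hard step --- it cannot be closed, because the lemma as stated is false. Take $L$ to be the complete chain $1>a_0>a_1>a_2>\dots>0$ (a spatial frame) and $a=0$, so that $\pt(\up a)=\pt(L)=\{0\}\cup\{a_n:n\in\mathbb{N}\}$. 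The nonempty opens of this spectrum are the whole space and the sets $\{0\}\cup\{a_n:n>k\}$, and every one of them contains the bottom prime $0$; hence $D=\{0\}$ is a discrete dense subspace. Yet $0=\bwe\{a_n:n\in\mathbb{N}\}$ and $0=\bwe\{a_n:n>k\}$ for each $k$, so no prime of $L$ is absolutely essential for $0$, i.e.\ $\mf{AbsEss}(0)=\emptyset\neq D$; equivalently, $\pt(\up 0)$ has no isolated points whatsoever. This is exactly the failure you predicted: $0$ is isolated in $D$ but not in $\pt(\up a)$, precisely because $0$ is not weakly covered.

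What makes this worth spelling out is that the paper's own proof founders on the very step you refused to wave through: it asserts that ``by discreteness, all points of $D$ must be isolated in $\pt(\up a)$'', which is the familiar $T_1$ fact (where $U\sm\{p\}$ is open) and is simply not valid in an arbitrary sober space, as the chain shows. Your diagnosis of the missing ingredient is also the right one, up to a small imprecision: closedness of $\{p\}$ is more than is needed, and it is weak coveredness of the points of $D$ (not equivalent to closedness, but implied by it) that separates, via Lemma \ref{particularmeet8}, ``essential in $\bwe D$'' from ``absolutely essential''. The clean repair is to add a hypothesis under which your last step goes through, for instance that $\pt(\up a)$ is scattered: if $U$ is open with $U\cap D=\{p\}$, scatteredness gives an isolated point $q$ of the nonempty subspace $U$, and since $U$ is open, $\{q\}$ is open in $\pt(\up a)$; density then forces $q\in D\cap U$, so $q=p$ and $p$ is isolated, after which Lemma \ref{absisolated} finishes. (Alternatively, assuming each $p\in D$ weakly covered suffices: combining isolation of $p$ in $\overline{\{p\}}$ with the inclusion $U\se \overline{\{p\}}$ that you derived from density exhibits $\{p\}$ as an intersection of two opens.) This restricted version is all the paper ever uses, since the lemma's only application is the converse direction of Proposition \ref{meetofabsess}, where $\pt(L)$ is assumed scattered.
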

\begin{proof}
Suppose that $L$ is a frame and that $a\in L$. Suppose that the space $\pt(\up a)\se \pt(L)$ has a discrete dense subspace; let this be $D$. By discreteness, all points of $D$ must be isolated in $\pt(\up a)$, and we know, by Lemma \ref{absisolated}, that this means they are all absolutely essential primes of $a$, so that we obtain $D\se \mf{AbsEss}(a)$. For the reverse set inclusion, we notice that any discrete dense subspace of a space must consist also of all isolated points of the space, or the singleton not in the subspace would be a nonempty open disjoint from it.  
\end{proof}

\begin{proposition}\label{meetofabsess}
For a frame $L$ we have that $L$ is spatial and $\pt(L)$ scattered if and only if every element $a\in L$ is the meet of its absolutely essential primes.
\end{proposition}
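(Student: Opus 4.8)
The plan is to reduce both implications to results already established, principally Proposition~\ref{allweaklytotspatial}, Lemma~\ref{meetofess}, and Lemma~\ref{particularmeet8}, together with Proposition~\ref{everyhasabsolutely}. The key observation driving the argument is that when every prime of $L$ is weakly covered, the notions of essential prime and absolutely essential prime of a given element coincide; this is exactly the content of the equivalence $(1)\Leftrightarrow(3)$ in Lemma~\ref{particularmeet8}. So the whole statement is really a translation of Proposition~\ref{allweaklytotspatial} into the language of meets.

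For the forward direction I would assume $L$ is spatial and $\pt(L)$ is scattered. By Proposition~\ref{allweaklytotspatial} this is equivalent to $L$ being totally spatial with all of its primes weakly covered. Total spatiality gives, via Lemma~\ref{meetofess}, that every $a\in L$ equals the meet $\bwe \mf{Ess}(a)$ of its essential primes; spatiality also guarantees that each $a$ is a meet of primes, so Lemma~\ref{particularmeet8} applies to it. Since all primes are weakly covered, that lemma yields $\mf{Ess}(a)=\mf{AbsEss}(a)$ for every $a$: each essential prime is automatically weakly covered, hence absolutely essential, and conversely every absolutely essential prime is essential. Substituting gives $a=\bwe \mf{AbsEss}(a)$. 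The case $a=1$ is the empty meet and is handled uniformly, since no prime lies above $1$, so that $\mf{Ess}(1)=\mf{AbsEss}(1)=\emptyset$.

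For the converse I would start from the hypothesis that $a=\bwe \mf{AbsEss}(a)$ for every $a\in L$. Because absolutely essential primes are in particular primes, this exhibits every element as a meet of primes, so $L$ is spatial by the characterization of spatiality as ``every element is a meet of primes''. For scatteredness I would invoke Proposition~\ref{everyhasabsolutely}: it suffices that every element $a\neq 1$ which is a meet of primes possess an absolutely essential prime. But if $\mf{AbsEss}(a)$ were empty we would get $a=\bwe\emptyset=1$, contradicting $a\neq 1$; hence $\mf{AbsEss}(a)\neq\emptyset$ and $\pt(L)$ is scattered.

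I do not expect a serious obstacle, since the heavy lifting is done by the earlier propositions; the only points needing care are the bookkeeping of the empty-meet case $a=1$ and the justification that weak coveredness of \emph{all} primes collapses $\mf{Ess}$ exactly onto $\mf{AbsEss}$, rather than merely giving one inclusion. If one preferred a self-contained topological argument, an alternative route would go through discrete dense subspaces: scatteredness makes each closed subspace $\pt(\up a)$ have a discrete dense subspace, which by Lemma~\ref{ifddsthenabsess} must be $\mf{AbsEss}(a)$, and a density argument as in Proposition~\ref{ddsimpliests} would give $\bwe \mf{AbsEss}(a)=a$; but the algebraic route above is the shorter one.
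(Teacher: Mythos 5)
Your proof is correct, but it follows a genuinely different route from the paper's. You reduce everything to the algebraic machinery already in place: Proposition~\ref{allweaklytotspatial} converts ``spatial and scattered'' into ``totally spatial with all primes weakly covered'', Lemma~\ref{meetofess} gives $a=\bwe\mf{Ess}(a)$, and the equivalence $(1)\Leftrightarrow(3)$ of Lemma~\ref{particularmeet8} collapses $\mf{Ess}(a)$ onto $\mf{AbsEss}(a)$ under global weak coveredness; the converse is dispatched by Proposition~\ref{everyhasabsolutely} plus the observation that an empty $\mf{AbsEss}(a)$ would force $a=1$. The paper instead argues topologically and directly: assuming $a=\bwe\mf{AbsEss}(a)$ it shows $\mf{AbsEss}(a)$ is a discrete dense subspace of $\pt(\up a)$ (discreteness from Lemma~\ref{absisolated}, density because an open $\{q\in\pt(\up a):b\nleq q\}$ missing $\mf{AbsEss}(a)$ forces $b\leq\bwe\mf{AbsEss}(a)=a$), and conversely, when $L$ is spatial and $\mf{AbsEss}(a)$ is discrete dense in $\pt(\up a)$, a density argument gives $\bwe\mf{AbsEss}(a)\leq a$ — this is essentially the ``alternative route'' you sketch in your last sentence. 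Each approach has its merits: yours is shorter, avoids any point-set reasoning, and is arguably tighter on the scatteredness conclusion, since the paper's forward argument only exhibits discrete dense subspaces of the \emph{closed} subspaces $\pt(\up a)$ and leaves implicit the passage to arbitrary subspaces (via Lemma~\ref{otherscatter} or Proposition~\ref{everyhasabsolutely}); the paper's proof, on the other hand, directly identifies $\mf{AbsEss}(a)$ as the canonical discrete dense subspace of $\pt(\up a)$, which ties the meet-theoretic condition to the geometric picture used in item (8) of the final theorem. One point of care you handled correctly, and which is worth keeping explicit, is that Lemma~\ref{particularmeet8} only applies to elements that are meets of primes, so spatiality must be invoked before the lemma in the forward direction.
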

\begin{proof}
Suppose that $L$ is a frame and that $a\in L$. First, suppose that $a=\bwe \mf{AbsEss}(a)$. We claim that this implies that $\mf{AbsEss}(a)\se \pt(\up a)$ is a discrete dense subspace. The space is discrete, since by Lemma \ref{absisolated} it consists of isolated points. To show that it is dense, we show that any open subset of $\pt(\up a)$ disjoint from it is empty. Suppose, then, that $\{q\in \pt(\up a):b\nleq q\}$ is disjoint from $\mf{AbsEss}(a)$. We then have $b\leq \bwe \mf{AbsEss}(a)=a$ and so we cannot possibly have a prime $p$ above $a$ with $b\nleq p$. Then,this open set is empty and so $\mf{AbsEss}(a)$ is dense. For the converse, suppose that $L$ is spatial and that $a\in L$, and that $\mf{AbsEss}(a)$ is a discrete dense subspace of $\pt(\up a)$ (see Lemma \ref{meetofabsess}). Let us show that $\bwe \mf{AbsEss}(a)\leq a$. By spatiality, it suffices to show that for every prime $q$ we have that $a\leq q$ implies $\bwe \mf{AbsEss}(a)\leq q$. Suppose, then, that $q\in \pt(\up a)$. Consider the open set $\{r\in \pt(\up a):\bwe \mf{AbsEss}(a)\nleq r\}$. This open set does not intersect $\mf{AbsEss}(a)$. Then, it must be empty. In particular, it cannot contain $q$, that is, we must have $\bwe \mf{AbsEss}(a)\leq q$. Then, indeed $\bwe \mf{AbsEss}(a)\leq a$.
\end{proof}

Let us gather the results obtained in this subsection in a final theorem.

\begin{theorem}
For a frame $L$, the following are equivalent.
\begin{enumerate}
    \item The frame $L$ is spatial and the space $\pt(L)$ is scattered.
    \item The frame $L$ is totally spatial and every prime of $L$ is weakly covered.
    \item The map $\pt:\SL\ra \ca{P}(\pt(L))$ is an isomorphism.
    \item There exists an isomorphism $\SL\cong \ca{P}(\pt(L))$.
    \item The coframe $\SL$ is Boolean and $L$ is spatial.
    
    \item The frame $L$ is spatial and all elements other than $1$ have an absolutely essential prime.
    \item All elements of $L$ are the meet of their absolutely essential primes.
    \item The frame $L$ is spatial and every subspace of $\pt(L)$ has a discrete dense subspace.
\end{enumerate}
\end{theorem}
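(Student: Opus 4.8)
The strategy is that most of these equivalences are immediate repackagings of results already established, so the only genuinely new work lies in connecting the three ``representation'' conditions (3), (4), (5) to the rest. I would take (1) as the hub. Proposition~\ref{allweaklytotspatial} gives (1)$\iff$(2) directly. For (1)$\iff$(6), since $L$ is spatial on both sides every element is a meet of primes, so Proposition~\ref{everyhasabsolutely} says precisely that $\pt(L)$ is scattered if and only if every $a\neq 1$ has an absolutely essential prime. Proposition~\ref{meetofabsess} is exactly (1)$\iff$(7), and (1)$\iff$(8) is Lemma~\ref{otherscatter} applied to $X=\pt(L)$, carrying the hypothesis that $L$ is spatial through unchanged. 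None of these should present difficulty.

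Next I would prove (2)$\iff$(3) using the main diagram. Its bottom arrow is $\pt\colon\SL\to\ca{P}(\pt(L))$, and commutativity gives $\pt=\mathrm{incl}\circ\pt(\cong)\circ\spa$, where the top arrow $\pt(\cong)$ is an isomorphism by Proposition~\ref{spaisosober}, the left arrow $\spa$ is a surjection, and the right arrow is the inclusion $\sob[\ca{P}(\pt(L))]\inclu\ca{P}(\pt(L))$, which is injective. From this factorization, $\pt$ is an isomorphism if and only if both vertical arrows are: injectivity of $\pt$ forces $\spa$ injective, hence (being onto) an isomorphism, while surjectivity of $\pt$ forces the inclusion onto, hence an isomorphism. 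By Theorem~\ref{important2} the left arrow is an isomorphism exactly when $L$ is totally spatial, and by Theorem~\ref{important1} the right inclusion is an isomorphism exactly when every prime of $L$ is weakly covered; together these are precisely (2).

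It remains to fold in (4) and (5). The implication (3)$\Rightarrow$(4) is trivial. For (4)$\Rightarrow$(5), an isomorphism $\SL\cong\ca{P}(\pt(L))$ transports the structure of a complete atomic Boolean algebra onto $\SL$; since the join-primes of $\SL$---which in a Boolean algebra are exactly its atoms---are the $\bl(p)$ with $p\in\pt(L)$ by Lemma~\ref{booleanareprimes}, atomicity forces every sublocale to be the join of the $\bl(p)$ below it, hence to equal its spatialization, so $L$ is totally spatial and in particular spatial. The crux is the converse (5)$\Rightarrow$(3). Assume $\SL$ is Boolean and $L$ spatial; the key claim is that every nonzero sublocale has a point. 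If $S\neq\{1\}$ had $\pt(S)=\emptyset$, then $\spa(S)=\mathfrak{M}(\emptyset)=\{1\}$; letting $T$ be the Boolean complement of $S$, so $S\ve T=L$, and using that $\spa$ preserves finite joins (Proposition~\ref{spaconucleus}) together with $\spa(L)=\mathfrak{M}(\pt(L))=L$ (spatiality of $L$), we get $L=\spa(S\ve T)=\spa(S)\ve\spa(T)=\spa(T)$; but $\spa(T)\se T\se L$, so $T=L$ and hence $S=\{1\}$, a contradiction. Thus $\SL$ is a complete atomic Boolean algebra with atoms the $\bl(p)$, and the canonical map from such an algebra to the powerset of its atoms is here $S\mapsto\{\bl(p):\bl(p)\se S\}$, which under the bijection $\bl(p)\leftrightarrow p$ is exactly $S\mapsto\pt(S)$; so $\pt\colon\SL\to\ca{P}(\pt(L))$ is an isomorphism, giving (3) and closing the cycle $(2)\Leftrightarrow(3)\Rightarrow(4)\Rightarrow(5)\Rightarrow(3)$.

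I expect the atomicity argument in (5)$\Rightarrow$(3) to be the main obstacle, since it is the one place where spatiality of $L$ must be combined nontrivially with Booleanness of $\SL$: the clean point is that a pointless nonzero sublocale has trivial spatialization, which is incompatible with $\spa$ preserving the join of $S$ with its complement once the top sublocale is known to be spatial. Everything else is bookkeeping around previously established results.
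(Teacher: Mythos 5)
Your proof is correct, and for most of the items it follows the paper's own route: the equivalences (1)$\Leftrightarrow$(2), (1)$\Leftrightarrow$(6), (1)$\Leftrightarrow$(7), (1)$\Leftrightarrow$(8) come from Propositions \ref{allweaklytotspatial}, \ref{everyhasabsolutely}, \ref{meetofabsess} and Lemma \ref{otherscatter}, and (2)$\Leftrightarrow$(3) comes from the main diagram together with Theorems \ref{important1} and \ref{important2} (your factorization argument just makes explicit what the paper asserts). Where you genuinely diverge is the cluster (3)--(4)--(5). The paper closes it as (3)$\Rightarrow$(4), (4)$\Rightarrow$(3), (4)$\Rightarrow$(5), (5)$\Rightarrow$(4): for (4)$\Rightarrow$(3) it argues that an isomorphism $\SL\cong\ca{P}(\pt(L))$ makes both sides powersets, so the atom bijection $\bl(p)\mapsto p$ (via Lemmas \ref{booleanareprimes} and \ref{atomsofba}) forces $\pt$ itself to be an isomorphism; for (5)$\Rightarrow$(4) it invokes ``any spatial Boolean frame is isomorphic to the powerset of its meet primes'' applied to the assembly $\SLop$. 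You instead close the cycle as (3)$\Rightarrow$(4)$\Rightarrow$(5)$\Rightarrow$(3), and your (5)$\Rightarrow$(3) is a genuinely different argument: a nontrivial pointless sublocale $S$ would have $\spa(S)=\{1\}$, contradicting that $\spa$ preserves the finite join of $S$ with its Boolean complement (Proposition \ref{spaconucleus}) and that $\spa(L)=L$ by spatiality of $L$; hence $\SL$ is atomic with atoms the $\bl(p)$ and $\pt$ is the canonical isomorphism onto the powerset of the atoms. This buys something real: the paper's (5)$\Rightarrow$(4) step tacitly needs the assembly $\SLop$ to be \emph{spatial} before Lemmas \ref{atomsofba} and \ref{spatialba} can be applied to it, yet spatiality of $\SLop$ is equivalent to total spatiality of $L$ (Corollary \ref{totspaassemblyspa}) --- essentially the conclusion being proven --- while hypothesis (5) only grants spatiality of $L$ and Booleanness of $\SL$. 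Your conucleus-plus-complement argument derives exactly the missing atomicity from those hypotheses, so your route is the more airtight of the two; its only cost is the reliance on two standard facts about complete Boolean algebras (that every element dominating an atom implies atomicity, and that $b\mapsto\{\mbox{atoms below } b\}$ is then an isomorphism onto the powerset of the atoms), which you should spell out in a line or two rather than leave implicit.
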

\begin{proof}
The equivalence between (1) and (2) is established by Proposition \ref{allweaklytotspatial}. Items (2) and (3) are equivalent because of Theorems \ref{important1} and \ref{important2}: the frame $L$ is totally spatial and such that all its primes are weakly covered if and only if both the vertical arrows of the main diagram of this section are isomorphisms. It is clear that (3) implies (4). Let us show that (4) implies (3). Suppose, then, that there is an isomorphism $\SL\ra \ca{P}(\pt(L))$. Since then $\SLop$ is a Boolean algebra, this isomorphism must restrict to a bijection between the two sets of atoms. But, since these two isomorphic Boolean algebras are both powersets, any bijection between their sets of atoms extends to an order isomorphism between them. Recall also that in $\SL$ the meet primes are the two-elements sublocales, that is, the Boolean sublocales of the form $\bl(p)$ for some $p\in \pt(L)$. Additionally, since $\SL$ is Boolean, the meet primes also coincide with the atoms (Lemma \ref{atomsofba}). The map $\pt:\SL\ra \ca{P}(\pt(L))$, indeed, is a bijection between the two sets of atoms since it acts as $\bl(p)\mapsto p$. Then, it must be an isomorphism. If (4) holds, clearly $\SL$ is Boolean. Additionally, $\SLop$ is also spatial, and this implies that $L$ is totally spatial, in particular it is spatial. Then, we have (5). Let us show that (5) implies (4). Suppose, then, that $L$ is spatial and $\SL$ is Boolean. Any spatial Boolean frame is isomorphic to the powerset of its meets primes, by Lemmas \ref{atomsofba} and \ref{spatialba}. Then, if $\SL$ is Boolean it is isomorphic to $\ca{P}(\pt(\mf{A}(L)))$. Since $\mf{A}(L)$ has as many points as $L$, this is isomorphic to $\ca{P}(\pt(L))$. The equivalence between (1) and (6) is stated by Proposition \ref{everyhasabsolutely}. Items (1) and (7) are equivalent by Proposition \ref{meetofabsess}. Items (1) and (8) are equivalent by Lemma \ref{otherscatter}.       
\end{proof}

\subsection{Spaces with frames of opens which are coframes}

In \cite{Avila19} the authors notice in the introduction that for an Alexandroff space $X$ we have that $X$ being sober already implies that the space is scattered (this is phrased as follows: for a poset $P$ we have that the frame of nuclei of the frame of upsets $\ca{U}(P)$ is Boolean if and only if the space $(P,\ca{U}(P))$ is sober). Here, we see that this result can be generalized to every space $X$ such that $\Om(X)$ is a coframe. We also will show that both these conditions are equivalent to all the primes of $\Om(X)$ being completely prime. Not all frames $\Om(X)$ which are coframes come from Alexandroff spaces. In particular, there are spatial frames which are coframes without being completely distributive (and all lattices of opens of Alexandroff spaces are completely distributive). The interactions between various kinds of distributivity on frames are explored in great detail in \cite{erne09}.

\begin{lemma}\label{spatialcompletelyprime}
If $L$ is a spatial frame, and $a\in L$ is an element such that $\bwe P\leq a$ implies that $p\leq a$ for some $p\in P$ for any collection of primes $P$, then the element $a$ is completely prime.
\end{lemma}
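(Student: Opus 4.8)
The plan is to establish the full completely-prime property of $a$ — that for every subset $S \se L$ with $\bwe S \leq a$ there is some $s \in S$ with $s \leq a$ — by using spatiality to rewrite an arbitrary meet as a meet of primes, at which point the hypothesis applies verbatim. Note first that $a \neq 1$: otherwise the hypothesis, applied to the empty family of primes (whose meet is $1 \leq a$), would demand a prime inside the empty set. This secures the nontriviality of $a$, and it remains only to verify the meet condition.

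Because $a \neq 1$, the empty family satisfies the completely-prime condition vacuously (since $1 \not\leq a$), so I would fix a nonempty family $\{b_i : i \in I\} \se L$ with $\bwe_i b_i \leq a$. Since $L$ is spatial, the earlier characterization of spatiality says every element is the meet of the primes above it, so $b_i = \bwe \pt(\up b_i)$ for each $i$, where $\pt(\up b_i) = \up b_i \cap \pt(L)$.

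Next I would amalgamate these into the single family of primes $P = \bcu_{i} \pt(\up b_i) \se \pt(L)$. By associativity of meets, $\bwe P = \bwe_i \bigl( \bwe \pt(\up b_i) \bigr) = \bwe_i b_i \leq a$. This is exactly the configuration the hypothesis governs: $P$ is a collection of primes with $\bwe P \leq a$, so there is some $p \in P$ with $p \leq a$.

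Finally, $p \in P = \bcu_i \pt(\up b_i)$ means $p \in \pt(\up b_i)$ for some index $i$, i.e.\ $b_i \leq p$; combining with $p \leq a$ yields $b_i \leq a$, as required. The argument is short, and there is no real obstacle: spatiality does all the work by letting us replace each $b_i$ with the meet of the primes above it and then merge all of these primes into one family to which the hypothesis applies. The only point demanding care is the concluding chain $b_i \leq p \leq a$, where the membership $p \in \pt(\up b_i)$ must be read correctly as $b_i \leq p$ rather than the reverse.
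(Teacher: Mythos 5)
Your proposal is correct and follows essentially the same route as the paper's proof: use spatiality to write each member of the family as the meet of the primes above it, merge all those primes into a single collection $P$, apply the hypothesis to get $p\in P$ with $p\leq a$, and conclude $b_i\leq p\leq a$. Your additional remark that the hypothesis applied to the empty family forces $a\neq 1$ is a small point of care the paper passes over silently, but it does not change the argument.
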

\begin{proof}
Suppose that $L$ is a spatial frame. Suppose that $a\in L$ is such that whenever $P\se \pt(L)$ and $\bwe P\leq a$ implies $p\leq a$ for some $p\in P$. Suppose that $\bwe A\leq a$ for $A\se L$. Since $L$ is spatial, this implies $\bwe_{x\in A}\bwe \pt(\up x)\leq a$. By assumption on $a$, we have that there is some $x\in A$ and some $p\in \pt(\up x)$ such that $p\leq a$. Then, we also have $x\leq a$. 
\end{proof}
\begin{lemma}\label{bigmeetisprime}
For a lattice $L$, codirected meets of primes are prime.
\end{lemma}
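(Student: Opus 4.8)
The plan is to compute directly with the meet $m := \bwe P$ of a codirected family $P$ of primes, and to verify the defining property of a meet-prime element by contradiction. Throughout I take codirected to include nonemptiness, so that some $p_0 \in P$ exists; since $p_0$ is prime we have $p_0 \neq 1$ and hence $m \leq p_0 < 1$, which already disposes of the requirement $m \neq 1$.

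For primality proper, suppose $x \we y \leq m$; the goal is to show $x \leq m$ or $y \leq m$. First I would record the trivial but crucial observation that $m \leq p$ for every $p \in P$, so that $x \we y \leq p$ holds for all $p \in P$, and each such $p$, being prime, satisfies $x \leq p$ or $y \leq p$.

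The heart of the argument is to rule out the possibility that both $x \nleq m$ and $y \nleq m$. Since $m = \bwe P$ is the greatest lower bound of $P$, the failure $x \nleq m$ forces the existence of a witness $p_1 \in P$ with $x \nleq p_1$, and similarly $y \nleq m$ yields $p_2 \in P$ with $y \nleq p_2$. This is the step where codirectedness becomes indispensable: I would pick $p_3 \in P$ with $p_3 \leq p_1$ and $p_3 \leq p_2$, and then note by transitivity that $x \nleq p_3$ (otherwise $x \leq p_3 \leq p_1$) and likewise $y \nleq p_3$. But $x \we y \leq m \leq p_3$ together with primality of $p_3$ gives $x \leq p_3$ or $y \leq p_3$, contradicting both. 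Hence $x \leq m$ or $y \leq m$, and $m$ is prime.

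I expect no serious computational obstacle; the content lies entirely in recognizing why codirectedness, as opposed to an arbitrary family, is exactly what permits the two independently chosen witnesses $p_1$ and $p_2$ to be merged into a single prime $p_3$ lying below both. For an arbitrary family of primes this merging fails, which is precisely why arbitrary meets of primes need not be prime; I would flag this as the conceptual pivot of the proof rather than any genuine difficulty.
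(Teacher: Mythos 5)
Your proof is correct and follows essentially the same route as the paper's: assume $x \we y \leq \bwe P$ with both $x$ and $y$ not below the meet, extract witnesses $p_1, p_2 \in P$ failing to lie above $x$ and $y$ respectively, use codirectedness to find $p_3 \leq p_1 \we p_2$, and derive a contradiction from primality of $p_3$. Your extra remark that nonemptiness of $P$ guarantees $\bwe P \neq 1$ is a small point of care the paper leaves implicit, but the argument is the same.
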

\begin{proof}
Suppose that $L$ is a frame, and that $p_i\in \pt(L)$ is a collection of primes such that for each $j,k\in I$ there is $i\in I$ with $p_i\leq p_j\we p_k$. Suppose that $x\we y\leq \bwe _i p_i$. Towards contradiction, suppose that $x\nleq \bwe _i p_i$ and $y\nleq \bwe_i p_i$. In particular, let $x\nleq p_x$ and $y\nleq p_y$. There is $p_{xy}\in \{p_i:i\in I\}$ such that $p_{xy}\leq p_x\we p_y$. By assumption, $x\we y\leq p_{xy}$. By primality of $p_{xy}$, either $x\leq p_{xy}\leq p_x\we p_y$, or $y\leq p_{xy}\leq p_x\we p_y$. In the first case we contradict that $x\nleq p_x$, and in the second that $y\nleq p_y$. Then, we must have that either $x \leq \bwe _i p_i$ or $y\leq \bwe_i p_i$.
\end{proof}
\begin{lemma}\label{noinfdeschain}
If $L$ is a frame and all primes of $L$ are completely prime, there can be no infinitely descending chains of primes in $L$.
\end{lemma}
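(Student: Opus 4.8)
The plan is to argue by contradiction, extracting a single prime element as the infimum of a hypothetical infinite descending chain and showing that complete primeness of that infimum is incompatible with the chain being strictly decreasing.

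Concretely, suppose there were an infinitely descending chain of primes $p_0 > p_1 > p_2 > \cdots$ in $L$, and set $q = \bwe_n p_n$. First I would note that this chain is codirected: given indices $j,k$, the element $p_{\max(j,k)}$ lies below both $p_j$ and $p_k$, and hence below $p_j \we p_k$. Thus $q$ is a codirected meet of primes, so Lemma \ref{bigmeetisprime} applies and tells us that $q$ is itself prime. (Note in passing that $q \leq p_0 < 1$, so $q$ is a genuine prime and not the top element.)

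By hypothesis every prime of $L$ is completely prime, so $q$ is completely prime; that is, whenever $\bwe S \leq q$ there is some $s \in S$ with $s \leq q$. I would apply this to the family $S = \{p_n : n \in \mathbb{N}\}$, whose meet is exactly $q$, so that $\bwe S = q \leq q$ yields an index $n$ with $p_n \leq q$. But $q \leq p_{n+1}$ by the definition of the meet, and $p_{n+1} < p_n$ along the chain, so $p_n \leq q \leq p_{n+1} < p_n$, which forces the absurd strict inequality $p_n < p_n$. This contradiction shows that no such chain can exist.

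The argument is short and I do not anticipate a serious obstacle; the only two points requiring care are verifying that the chain is codirected so that Lemma \ref{bigmeetisprime} is genuinely applicable, and applying the definition of complete primeness to the \emph{whole} family $\{p_n\}$ rather than to any finite subchain. It is precisely this passage from finite to arbitrary meets that forces the infimum to be attained somewhere along the chain, thereby collapsing a strict descent into an equality and producing the contradiction.
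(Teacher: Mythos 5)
Your proof is correct and takes essentially the same approach as the paper: both form the meet $q=\bwe_n p_n$ of the chain, use codirectedness together with Lemma \ref{bigmeetisprime} to conclude that $q$ is prime, and then observe that complete primeness applied to the family $\{p_n\}$ would force some $p_n\leq q$, which cannot happen in a strictly descending chain. The only difference is presentational: the paper exhibits $q$ as a prime that fails to be completely prime, contradicting the hypothesis, whereas you assume complete primeness of $q$ and derive the absurdity $p_n<p_n$.
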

\begin{proof}
If we have a frame $L$ and an infinitely descending chain of primes $p_i$, we must have that $\bwe_i p_i\notin \{p_i:i\in I\}$, or $\{p_i:i\in I\}$ would have a minimum, contradicting that it is infinitely descending. Furthermore, since $\{p_i:i\in I\}$ is a chain, it is codirected. Then, the meet $\bwe_i p_i$ is prime, by Lemma \ref{bigmeetisprime}. So, there is a prime $\bwe_i p_i$ such that $\bwe _i p_i\leq \bwe _i p_i$ but $p_i\nleq \bwe _i p_i$ for every $i\in I$, that is, $\bwe_i p_i$ is not completely prime.
\end{proof}

\begin{proposition}
Suppose that $L$ is a spatial frame and a coframe. The following are equivalent.
\begin{enumerate}
    \item The space $\pt(L)$ is $T_D$.
    \item All primes of $L$ are completely prime.
    \item The space $\pt(L)$ is scattered.
\end{enumerate}
\end{proposition}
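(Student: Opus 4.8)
The plan is to prove the cycle $(1)\Rightarrow(2)\Rightarrow(3)\Rightarrow(1)$, assembling the dictionary built in the previous subsections. The step $(3)\Rightarrow(1)$ is immediate: if $\pt(L)$ is scattered then, since $L$ is spatial, Proposition \ref{allweaklytotspatial} makes $L$ totally spatial with all primes weakly covered, and all primes being weakly covered is equivalent to $\pt(L)$ being $T_D$ by Theorem \ref{important1}. So the content lives in the two forward implications.

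For $(1)\Rightarrow(2)$, I would first use Theorem \ref{important1} to replace the $T_D$ hypothesis by the statement that every prime of $L$ is weakly covered, and then upgrade weak coveredness to complete primality in two moves. The first move uses spatiality: if $p=\bwe A$ for some $A\se L$, writing each $a=\bwe(\pt(\up a))$ exhibits $p$ as a meet of primes, so weak coveredness forces $p=a$ for some $a\in A$; hence every prime is in fact covered. The second move is the only place the coframe hypothesis is genuinely used: given $\bwe A\le p$, the coframe distributive law gives $\bwe_{a\in A}(a\ve p)=(\bwe A)\ve p=p$, so coveredness applied to the family $\{a\ve p:a\in A\}$ yields $p=a\ve p$ for some $a$, i.e.\ $a\le p$. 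Thus $p$ is completely prime.

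For $(2)\Rightarrow(3)$, I would argue that complete primality makes $\pt(L)$ well-founded enough to extract isolated points. By Lemma \ref{noinfdeschain} there are no infinite descending chains of primes, so every nonempty subspace $P\se\pt(L)$ has a minimal element $p$. I claim $p$ is isolated in $P$: otherwise, by Lemma \ref{firstprime}, $p$ is not essential in $\bwe P$, i.e.\ $\bwe(P\sm\{p\})\le p$, and complete primality of $p$ then produces some $q\in P\sm\{p\}$ with $q\le p$, hence $q<p$, contradicting minimality of $p$. So every nonempty subspace has an isolated point, which is scatteredness (or equivalently, I would phrase this through Lemma \ref{primes}).

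The main obstacle is the upgrade from weak coveredness to complete primality in $(1)\Rightarrow(2)$: this is precisely where both spatiality and the coframe law are indispensable, and it is what forces the three conditions to collapse in this setting (in a general spatial frame $(2)\Rightarrow(3)$ still holds, but the reverse passage fails without the coframe hypothesis). A secondary delicate point is the inference from ``no infinite descending chain of primes'' to ``every nonempty subspace has a minimal element''; I would justify this by the descending chain condition, being mindful of the choice principles involved, in keeping with the paper's effort to avoid Zorn's Lemma.
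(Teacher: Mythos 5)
Your proof is correct, and it follows the paper's overall strategy: the same cycle $(1)\Rightarrow(2)\Rightarrow(3)\Rightarrow(1)$, with the coframe distributive law carrying $(1)\Rightarrow(2)$ and minimal elements extracted from Lemma \ref{noinfdeschain} carrying $(2)\Rightarrow(3)$. Two differences are worth recording. First, for $(3)\Rightarrow(1)$ the paper argues directly: if $p=\bwe P$ is prime with $p\notin P$, then the meet of the family $P\cup\{p\}$ has no essential element, contradicting scatteredness via Lemma \ref{primes}; your shortcut through Proposition \ref{allweaklytotspatial} and Theorem \ref{important1} is equally valid and somewhat cleaner. Second, and more substantively, your two-step treatment of $(1)\Rightarrow(2)$ --- first upgrading weak coveredness to full coveredness using spatiality, then applying coveredness to the family $\{a\ve p:a\in A\}$ --- is more careful than the paper's own text. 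The paper applies weak coveredness directly to the elements $q\ve p$, but weak coveredness as defined concerns only meets of \emph{primes}, and a join of two primes need not be prime: in the four-element poset with two minimal points $a,b$ below two maximal points $c,d$, the up-set frame has primes $\{c\}$ and $\{d\}$ whose join $\{c,d\}$ is not prime, even though this frame is finite, spatial and a coframe. Your version, which only ever invokes coveredness (valid for arbitrary meets), is exactly the argument the paper implicitly relies on when it remarks earlier that coveredness and weak coveredness coincide for spatial frames, so you have filled a genuine gap rather than introduced a detour. One shared caveat: both you and the paper pass from ``no infinite descending chains of primes'' to ``every nonempty set of primes has a minimal element,'' which uses dependent choice; flagging this, as you do, is appropriate given the paper's choice-avoidance theme, but it is not a defect relative to the paper's own proof.
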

\begin{proof}
Suppose that $L$ is a spatial frame which is a coframe. First, suppose that (1) holds. Suppose that $p\in \pt(L)$. By Lemma \ref{spatialcompletelyprime}, it suffices to show that whenever $P\se \pt(L)$ and $\bwe P\leq p$ we have that $q\leq p$ for some $q\in P$. Suppose, then, that the antecedent holds. Then, $\bwe (P\ve p)=p$, where we have used the coframe distributivity law. Since $\pt(L)$ is $T_D$, all primes of $L$ are weakly covered (see Theorem \ref{important1}). Then, we must have $q\ve p=p$ for some $q\in P$, that is, $q\leq p$. Let us show that (2) implies (3). Suppose that (2) holds, and that $P\se \pt(L)$. To show that (3) holds we need to show that there is an essential prime of the meet $\bwe P$. By Lemma \ref{noinfdeschain}, we have that there are no infinitely descending chains in $P$, and so there must be a minimal element, say $p\in P$. We then have that $q\nleq p$ for every $q\in P{\sm}\{p\}$. Since $p$ is completely prime, this also implies that $\bwe P{\sm}\{p\}\nleq p$, that is, $p$ is an essential element of the meet $\bwe P$. Finally, let us show that (3) implies (1). Suppose that $\pt(L)$ is scattered and that $P\se \pt(L)$ is a collection of primes such that $\bwe P$ is prime. Suppose towards contradiction that $p\notin P$. Then, we have that $\bwe P\we p$ is a meet of primes without essential prime. This contradicts $\pt(L)$ being scattered.  
\end{proof}

\bibliographystyle{elsarticle-harv}
\bibliography{bibliorelation}

\begin{thebibliography}{14}
\expandafter\ifx\csname natexlab\endcsname\relax\def\natexlab#1{#1}\fi
\providecommand{\url}[1]{\texttt{#1}}
\providecommand{\href}[2]{#2}
\providecommand{\path}[1]{#1}
\providecommand{\DOIprefix}{doi:}
\providecommand{\ArXivprefix}{arXiv:}
\providecommand{\URLprefix}{URL: }
\providecommand{\Pubmedprefix}{pmid:}
\providecommand{\doi}[1]{\href{http://dx.doi.org/#1}{\path{#1}}}
\providecommand{\Pubmed}[1]{\href{pmid:#1}{\path{#1}}}
\providecommand{\bibinfo}[2]{#2}
\ifx\xfnm\relax \def\xfnm[#1]{\unskip,\space#1}\fi
\bibitem[{Aull and Thron(1963)}]{Aull62}
\bibinfo{author}{Aull, C.E.}, \bibinfo{author}{Thron, W.J.},
  \bibinfo{year}{1963}.
\newblock \bibinfo{title}{{Separation Axioms Between $T_0$ and $T_1$}}.
\newblock \bibinfo{journal}{Indagationes Mathematicae} ,
  \bibinfo{pages}{26--37}.
\bibitem[{Banaschewski and Pultr(2010)}]{banaschewskitd}
\bibinfo{author}{Banaschewski, B.}, \bibinfo{author}{Pultr, A.},
  \bibinfo{year}{2010}.
\newblock \bibinfo{title}{{Pointfree Aspects of the $T_D$ Axiom of Classical
  Topology}}.
\newblock \bibinfo{journal}{Quaestiones Mathematicae} \bibinfo{volume}{33},
  \bibinfo{pages}{369--385}.
\bibitem[{Banaschewski and Pultr(2014)}]{banaschewski15}
\bibinfo{author}{Banaschewski, B.}, \bibinfo{author}{Pultr, A.},
  \bibinfo{year}{2014}.
\newblock \bibinfo{title}{On covered prime elements and complete homomorphisms
  of frames}.
\newblock \bibinfo{journal}{Quaestiones Mathematicae} \bibinfo{volume}{37},
  \bibinfo{pages}{451--454}.
\bibitem[{Erné(2009)}]{erne09}
\bibinfo{author}{Erné, M.}, \bibinfo{year}{2009}.
\newblock \bibinfo{title}{Infinite distributive laws versus local connectedness
  and compactness properties}.
\newblock \bibinfo{journal}{Topology and its Applications}
  \bibinfo{volume}{156}, \bibinfo{pages}{2054--2069}.
\bibitem[{Isbell(1972)}]{isbell72}
\bibinfo{author}{Isbell, J.}, \bibinfo{year}{1972}.
\newblock \bibinfo{title}{Atomless parts of spaces}.
\newblock \bibinfo{journal}{Mathematica Scandinavica} \bibinfo{volume}{31},
  \bibinfo{pages}{5--32}.
\bibitem[{Isbell(1991)}]{ISBELL91}
\bibinfo{author}{Isbell, J.}, \bibinfo{year}{1991}.
\newblock \bibinfo{title}{On dissolute spaces}.
\newblock \bibinfo{journal}{Topology and its Applications}
  \bibinfo{volume}{40}, \bibinfo{pages}{63 -- 70}.
\bibitem[{Johnstone(1982)}]{johnstone82}
\bibinfo{author}{Johnstone, P.T.}, \bibinfo{year}{1982}.
\newblock \bibinfo{title}{Stone Spaces}. volume~\bibinfo{volume}{3} of
  \textit{\bibinfo{series}{Cambridge Studies in Advanced Mathematics}}.
\newblock \bibinfo{publisher}{Cambridge University Press}.
\bibitem[{Niefield and Rosenthal(1987)}]{niefield87}
\bibinfo{author}{Niefield, S.}, \bibinfo{author}{Rosenthal, K.},
  \bibinfo{year}{1987}.
\newblock \bibinfo{title}{Spatial sublocales and essential primes}.
\newblock \bibinfo{journal}{Topology and its Applications}
  \bibinfo{volume}{26}, \bibinfo{pages}{263 -- 269}.
\bibitem[{Picado and Pultr(2011)}]{picadopultr2011frames}
\bibinfo{author}{Picado, J.}, \bibinfo{author}{Pultr, A.},
  \bibinfo{year}{2011}.
\newblock \bibinfo{title}{Frames and Locales: Topology without points}.
\newblock \bibinfo{publisher}{Springer-Birkh{\"a}user Basel}.
\bibitem[{Picado and Pultr(2019)}]{picado19}
\bibinfo{author}{Picado, J.}, \bibinfo{author}{Pultr, A.},
  \bibinfo{year}{2019}.
\newblock \bibinfo{title}{{Axiom $T_D$ and the Simmons sublocale theorem}}.
\newblock \bibinfo{type}{{Pr\'{e}-Publica\c{c}\~{o}es do Departamento de
  Matem\'{a}tica}} \bibinfo{number}{18-48}. Universidade de Coimbra.
\bibitem[{Pultr and Tozzi(1994)}]{Pultr94}
\bibinfo{author}{Pultr, A.}, \bibinfo{author}{Tozzi, A.}, \bibinfo{year}{1994}.
\newblock \bibinfo{title}{Separation axioms and frame representation of some
  topological facts}.
\newblock \bibinfo{journal}{Applied Categorical Structures}
  \bibinfo{volume}{2}, \bibinfo{pages}{107--118}.
\bibitem[{Simmons(1980)}]{simmons80}
\bibinfo{author}{Simmons, H.}, \bibinfo{year}{1980}.
\newblock \bibinfo{title}{{Spaces with Boolean assemblies}}.
\bibitem[{Ávila et~al.(2019a)Ávila, Bezhanishvili, Morandi and
  Zaldívar}]{Avila19}
\bibinfo{author}{Ávila, F.}, \bibinfo{author}{Bezhanishvili, G.},
  \bibinfo{author}{Morandi, P.}, \bibinfo{author}{Zaldívar, A.},
  \bibinfo{year}{2019}a.
\newblock \bibinfo{title}{{The Frame of Nuclei of an Alexandroff Space}}.
\newblock \bibinfo{note}{ArXiv:1906.03640}.
\bibitem[{Ávila et~al.(2019b)Ávila, Bezhanishvili, Morandi and
  Zaldívar}]{avila2019frame}
\bibinfo{author}{Ávila, F.}, \bibinfo{author}{Bezhanishvili, G.},
  \bibinfo{author}{Morandi, P.}, \bibinfo{author}{Zaldívar, A.},
  \bibinfo{year}{2019}b.
\newblock \bibinfo{title}{When is the frame of nuclei spatial: A new approach}.
\newblock \bibinfo{note}{{arXiv:1906.03636}}.

\end{thebibliography}
\end{document}